\def\subsubsection{\@startsection{subsubsection}{3}%
  \z@{.5\linespacing\@plus.7\linespacing}{-.5em}%
  {\normalfont\bfseries}}
\def\subsection{\@startsection{subsection}{2}%
  \z@{.5\linespacing\@plus.7\linespacing}{.7\linespacing}%
  {\normalfont\bfseries}}
\DeclareSymbolFont{bbold}{U}{bbold}{m}{n}
\DeclareSymbolFontAlphabet{\mathbbold}{bbold}
\newtheorem*{theorem A}{Theorem A}
\newtheorem*{theorem B}{Theorem B}
\def\Spec{\mathop{\rm Spec}\nolimits}
\def\Mat{\mathop{\rm Mat}\nolimits}
\def\GL{\mathop{\rm GL}\nolimits}
\def\SL{\mathop{\rm SL}\nolimits}
\def\Bun{\mathop{\rm Bun}\nolimits}
\def\Rep{\mathop{\rm Rep}\nolimits}
\def\Maps{\mathop{\rm Maps}\nolimits}
\def\VinBun{\mathop{\rm VinBun}\nolimits}
\def\Vin{\mathop{\rm Vin}\nolimits}
\def\Kost{\mathop{\rm Kostant}\nolimits}
\def\Qellbar{\mathop{\overline{\BQ}_\ell}\nolimits}
\def\IC{\mathop{\rm IC}\nolimits}
\def\add{{\rm add}}
\def\gr{{\rm gr}}
\def\id{{\rm id}}
\newbox\starbox 
\def\hatE{{\mathchoice
  {\hbox{\rlap{\smash{\kern1pt\lower1pt\hbox{$\widehat{\phantom{\hbox{$E$}}}$}}}$E$}}
  {\hbox{\rlap{\smash{\kern1pt\lower1pt\hbox{$\widehat{\phantom{\hbox{$E$}}}$}}}$E$}}
  {\widehat E}
  {\widehat E}}}
\def\hatW{\hbox{\rlap{\smash{\lower1pt\hbox{$\widehat{\phantom{\hbox{$W$}}}$}}}$W$}}
\def\tildeW{\hbox{\rlap{\smash{\lower1pt\hbox{$\widetilde{\phantom{\hbox{$W$}}}$}}}$W$}}
\newbox\checkWbox
\checkWbox\hbox{\rlap{\smash{\kern.8pt\lower4pt\hbox{\huge \v{}}}}$W$}
\def\barBun{{\overline{\Bun}}}
\def\D{{\rm D}}
\def\sl{{\mathfrak{sl}}}
\def\circV{{\mathchoice{\circVbig}{\circVbig}{\circVscript}{\circVscriptscript}}}
\def\circVbig{\hbox{\text{\it\r{V}}}}
\def\circVscript{\hbox{\scriptsize\text{\it\r{V}}}}
\def\circVscriptscript{\mbox{\tiny\text{\it\r{V}}}}
\def\circVprime{{\mathchoice{\circV\kern1.8pt{}^\prime}{\circV\kern1.8pt{}^\prime}
                            {\circVscript\kern1.3pt{}^\prime}{\circVscriptscript\kern1pt{}^\prime}}}
\def\circVpprime{{\mathchoice{\circV\kern1.8pt{}^{\prime \prime}}{\circV\kern1.8pt{}^{\prime \prime}}
                            {\circVscript\kern1.3pt{}^{\prime \prime}}{\circVscriptscript\kern1pt{}^{\prime \prime}}}}
\def\lambdach{\check\lambda}
\def\Lambdach{\check\Lambda}
\def\alphacheck{\check\alpha}
\def\thetacheck{\check\theta}
\def\betacheck{\check\beta}
\let\epsilon\varepsilon
\let\setminus\smallsetminus
\let\leq\leqslant
\let\geq\geqslant
\newtheorem{theorem}[subsubsection]{Theorem}
\newtheorem{corollary}[subsubsection]{Corollary}
\newtheorem{proposition-definition}[subsubsection]{Proposition-Definition}
\newtheorem{theorem-definition}[subsubsection]{Theorem-Definition}
\newtheorem{lemma}[subsubsection]{Lemma}
\newtheorem{proposition}[subsubsection]{Proposition}
\newtheorem{question}[subsubsection]{Question}
\newtheorem{remark}[subsubsection]{Remark}
\newcommand{\BA}{{\mathbb{A}}}
\newcommand{\BG}{{\mathbb{G}}}
\newcommand{\BQ}{{\mathbb{Q}}}
\newcommand{\BY}{{\mathbb{Y}}}
\newcommand{\BZ}{{\mathbb{Z}}}
\newcommand{\Fn}{{\mathfrak{n}}}
\newcommand{\Fs}{{\mathfrak{s}}}
\newcommand{\CF}{{\mathcal F}}
\newcommand{\CI}{{\mathcal I}}
\newcommand{\CK}{{\mathcal K}}
\newcommand{\CO}{{\mathcal O}}
\newcommand{\CP}{{\mathcal P}}
\newcommand{\CY}{{\mathcal Y}}
\newcommand{\ssec}{\subsection}
\newcommand{\sssec}{\subsubsection}
\def\longto{\longrightarrow}
\def\into{\hookrightarrow}
\def\longinto{\lhook\joinrel\longrightarrow}
\def\longonto{\ontoover{\ }}
\newbox\mybox
\def\arrover#1{\mathrel{
       \setbox\mybox=\hbox spread 1.4em
              {\hfil$\scriptstyle#1\vphantom{g}$\hfil}
       \vbox{\offinterlineskip\copy\mybox
             \hbox to\wd\mybox{\rightarrowfill}}}}
\def\larrover#1{\mathrel{
       \setbox\mybox=\hbox spread 1.4em{\hfil$\scriptstyle#1$\hfil}
       \vbox{\offinterlineskip\copy\mybox
             \hbox to\wd\mybox{\leftarrowfill}}}}
\def\ontoover#1{\mathrel{
       \setbox\mybox=\hbox spread 1.4em{\hfil$\scriptstyle#1$\hfil}
       \vbox{\offinterlineskip\copy\mybox
             \hbox to\wd\mybox{\rightarrowfill\hskip-2.8mm
                               $\rightarrow$}}}}
\def\leftontoover#1{\mathrel{
       \setbox\mybox=\hbox spread 1.4em{\hfil$\scriptstyle#1$\hfil}
       \vbox{\offinterlineskip\copy\mybox
             \hbox to\wd\mybox{$\leftarrow$\hskip-2.8mm
                               \leftarrowfill}}}}
\newbox\invlimsymbol
\newbox\dirlimsymbol
\begin{document}

\title[Monodromy and Vinberg fusion for $\VinBun_G$]{Monodromy and Vinberg fusion for the principal degeneration of the space of $G$-bundles}

\author{Simon Schieder}
\thanks{Dept. of Mathematics, MIT, Cambridge, MA 02139, USA}
\address{Dept. of Mathematics, MIT, Cambridge, MA 02139, USA}

\maketitle

\begin{abstract}
We study the geometry and the singularities of the \textit{principal direction} of the Drinfeld-Lafforgue-Vinberg degeneration of the moduli space of $G$-bundles $\Bun_G$ for an arbitrary reductive group $G$, and their relationship to the Langlands dual group $\check{G}$ of $G$.

The article consists of two parts.
In the first and main part, we study the monodromy action on the nearby cycles sheaf along the principal degeneration of $\Bun_G$ and relate it to the Langlands dual group $\check G$. We describe the weight-monodromy filtration on the nearby cycles and generalize the results of \cite{Sch1} from the case $G=\SL_2$ to the case of an arbitrary reductive group $G$. Our description is given in terms of the combinatorics of the Langlands dual group $\check G$ and generalizations of the Picard-Lefschetz oscillators found in \cite{Sch1}.
Our proofs in the first part use certain \textit{local models} for the principal degeneration of $\Bun_G$ whose geometry is studied in the second part.

Our local models simultaneously provide two types of degenerations of the Zastava spaces; these degenerations are of very different nature, and together equip the Zastava spaces with the geometric analog of a Hopf algebra structure. The first degeneration corresponds to the usual Beilinson-Drinfeld fusion of divisors on the curve. The second degeneration is new and corresponds to what we call \textit{Vinberg fusion}: It is obtained not by degenerating divisors on the curve, but by degenerating the group $G$ via the Vinberg semigroup. Furthermore, on the level of cohomology the degeneration corresponding to the Vinberg fusion gives rise to an algebra structure, while the degeneration corresponding to the Beilinson-Drinfeld fusion gives rise to a coalgebra structure; the compatibility between the two degenerations yields the Hopf algebra axiom.

It is natural to conjecture that this Hopf algebra agrees with the universal enveloping algebra of the positive part of the Langlands dual Lie algebra $\check{\mathfrak{g}}$. The above procedure would then yield a novel and highly geometric way to pass to the Langlands dual side: Elements of $\check{\mathfrak{g}}$ are represented as cycles on the above moduli spaces, and the Lie bracket of two elements is obtained by deforming the cartesian product cycle along the Vinberg degeneration.

\end{abstract}

\newpage

\tableofcontents

\newpage

\section{Introduction}

\bigskip

\ssec{Context and overview}

Let $X$ be a smooth projective curve over an algebraically closed field $k$, let $G$ be a reductive group over $k$, and let $\Bun_G$ denote the moduli stack of $G$-bundles on $X$.
Drinfeld has constructed (unpublished) a canonical compactification $\barBun_G$ of $\Bun_G$ which is of relevance both to the classical and geometric Langlands program; the compactification $\barBun_G$ is singular, and its definition relies on the Vinberg semigroup $\Vin_G$ of $G$ introduced by Vinberg (\cite{V}).

\medskip

While Drinfeld's definition of the compactification $\barBun_G$ appeared only recently in \cite{Sch2}, certain smooth open substacks of $\barBun_G$ in the special case $G = \GL_n$ were already used by Drinfeld and by L. Lafforgue in their seminal work on the Langlands correspondence for function fields (\cite{Dr1}, \cite{Dr2}, \cite{Laf}). The compactification $\barBun_G$ is however already singular for $G=\SL_2$, and for various applications in the classical and geometric Langlands program it is necessary to understand its singularities.
The study of the singularities of $\barBun_G$ was begun in \cite{Sch1} in the case $G=\SL_2$, and, with a different focus, in \cite{Sch2} for an arbitrary reductive group $G$. These articles also introduce a minor modification of the space $\barBun_G$ which we refer to as the \textit{Drinfeld-Lafforgue-Vinberg degeneration} of $\Bun_G$ and denote by $\VinBun_G$; it can be viewed as a canonical multi-parameter degeneration of $\Bun_G$ over an affine space.

\medskip

The study of the singularities of $\barBun_G$ and $\VinBun_G$ in \cite{Sch1}, \cite{Sch2}, and the present article is originally motivated by the geometric Langlands program (\cite{G3}, \cite{G4}), but has also already found applications to the classical theory. As examples of applications we list the study of Drinfeld's and Gaitsgory's \textit{miraculous duality} and \textit{strange functional equations} in \cite{G2} and \cite{Sch2}; the geometric construction of the \textit{Bernstein asymptotics map} in \cite{Sch2} conjectured by Sakellaridis (\cite{Sak1}, \cite{Sak2}, and also \cite{BK}, \cite{CY}); and the geometric construction of Drinfeld's and Wang's \textit{strange bilinear form on the space of automorphic forms} in \cite{DrW} and \cite{W2}, using \cite{Sch1} and \cite{Sch2}, respectively. Finally, the \textit{Picard-Lefschetz oscillators} -- certain perverse sheaves found in \cite{Sch1} for $G=\SL_2$ and generalized in the present work to arbitrary reductive groups $G$ -- have recently also been shown to appear in other deformation-theoretic contexts, such as in the degeneration of Whittaker sheaves in the work of Campbell (\cite{C}).

\medskip

The work discussed in this article consists of two parts: A first and main part, and a second part which is logically independent from the first; both are concerned with the study of the geometry of the \textit{principal degeneration} of $\Bun_G$, a one-parameter subfamily of the multi-parameter family $\VinBun_G$.
The first part of the present work continues the study of the singularities of the space $\VinBun_G$ begun in the articles \cite{Sch1} and \cite{Sch2}, though it is independent of these articles. The main theorem of the first part determines the weight-monodromy filtration on the nearby cycles sheaf of the principal degeneration of $\Bun_G$, generalizing the main theorem of \cite{Sch1} from the case $G=\SL_2$ to the case of an arbitrary reductive group $G$. While this is not visible in the case $G=\SL_2$ treated in \cite{Sch1}, the answer for an arbitrary reductive group achieves the passage to the Langlands dual side: Our description is given in terms of the combinatorics of the Langlands dual group $\check G$ of $G$ and generalizations of the \textit{Picard-Lefschetz oscillators} found in \cite{Sch1}.
We refer the reader to \cite[Sec. 1.3--1.5]{Sch1} for further background on how these results are related to the miraculous duality and the geometric Langlands program.

\medskip

The proofs of the results of the first part utilize certain \textit{local models} for the principal degeneration; the geometry of these local models is studied in further detail in a separate section. The contribution of this separate section is the construction of a novel geometric operation on the Zastava spaces that we call \textit{Vinberg fusion} and which naturally complements the usual Beilinson-Drinfeld fusion.

\bigskip

\ssec{The principal degeneration of $\Bun_G$}

Before discussing our main results, we first need to introduce the basic geometric objects needed for its formulation.

\sssec{The Vinberg semigroup $\Vin_G$}
In \cite{V} Vinberg has defined and studied a canonical multi-parameter degeneration $\Vin_G \to \BA^r$ of an arbitrary reductive group $G$ of semisimple rank $r$, the \textit{Vinberg semigroup}. Its fibers away from all coordinate planes are isomorphic to the group $G$. Its fibers over the coordinate planes afford group-theoretic descriptions in terms of the parabolic subgroups of $G$. While the Vinberg semigroup is singular, it possesses a certain well-behaved open subvariety which is closely related to the wonderful compactification constructed by De Concini and Procesi in \cite{DCP}.

\medskip

\sssec{The Drinfeld-Lafforgue-Vinberg degeneration $\VinBun_G$}
As the Vinberg semigroup $\Vin_G$ comes equipped with a natural $G \times G$-action, we may form the mapping stack
$$\Maps(X, \Vin_G / G \times G)$$
parametrizing maps from the curve $X$ to the quotient $\Vin_G / G \times G$. The Drinfeld-Lafforgue-Vinberg degeneration $\VinBun_G$ from \cite{Sch2} is then obtained from this mapping stack by imposing certain non-degeneracy conditions. The natural map $\Vin_G \to \BA^r$ induces a natural map
$$\VinBun_G \ \longto \ \BA^r \, .$$
Completely analogously to how $\Vin_G$ forms a canonical multi-parameter degeneration of the group $G$, this map realizes $\VinBun_G$ as a canonical multi-parameter degeneration of $\Bun_G$. The compactification $\barBun_G$ mentioned above can be obtained from $\VinBun_G$ as the quotient by a maximal torus $T$ of $G$.

\medskip

\sssec{The case $G = \SL_2$ from \cite{Sch1}}
For $G = \SL_2$ the degeneration $\VinBun_G$ may be described in concrete terms as follows: It parametrizes triples $(E_1, E_2, \varphi)$ consisting of two $\SL_2$-bundles $E_1$, $E_2$ on the curve $X$ together with a morphism of the associated vector bundles $\varphi: E_1 \to E_2$ which is required to be not the zero map. The map
$$\VinBun_G \ \longto \ \BA^1$$
mentioned above is obtained by taking the determinant of the map $\varphi$.

\medskip

\sssec{The principal degeneration $\VinBun_G^{princ}$}
In the present article we will only be interested in the one-parameter degeneration
$$\VinBun_G^{princ} \ \longto \ \BA^1$$
of $\Bun_G$ obtained by restricting the family $\VinBun_G \to \BA^r$ to a general line in $\BA^r$ passing through the origin; for concreteness one may choose the line passing through the origin and the point $(1,\ldots,1) \in \BA^r$. We refer to this degeneration as the \textit{principal degeneration} of $\Bun_G$ as its special fiber $\VinBun_G^{princ}|_0$ is naturally related to the Borel subgroup $B$ of $G$.

\medskip

\sssec{Stratification of the special fiber $\VinBun_G^{princ}|_0$}
The special fiber $\VinBun_G^{princ}|_0$ is singular, and we will introduce a \textit{defect stratification} for it: One can associate to each point in the special fiber a certain effective divisor on the curve $X$ valued in the monoid of positive coweights $\Lambdach_G^{pos}$ which governs the singularity of the point in the moduli space $\VinBun_G^{princ}$. The degree of this divisor forms an element of $\Lambdach_G^{pos}$ as well, and we refer to it as the \textit{defect} of the point. The strata of the defect stratification of the special fiber $\VinBun_G^{princ}|_0$ are then defined as the loci where the defect remains constant. For the purpose of this introduction we will denote the stratum of the special fiber corresponding to a positive coweight $\check\theta \in \Lambdach_G^{pos}$ by $\sideset{_{\check\theta}}{_G^{princ}}{\VinBun}|_0$.

\bigskip

\ssec{Main theorem about nearby cycles}

The main theorem of the first part of this article describes the weight-monodromy filtration on the nearby cycles sheaf $\Psi^{princ}$ of the principal degeneration $\VinBun_G^{princ} \to \BA^1$. To sketch its formulation, let $\barBun_B$ denote Drinfeld's relative compactification of the map $\Bun_B \to \Bun_G$, defined in \cite{BG1}; furthermore, for a positive coweight $\check\theta \in \Lambdach_G^{pos}$ let $X^{\check\theta}$ denote the space of $\Lambdach_G^{pos}$-valued effective divisors on the curve $X$.
For each $\check\theta \in \Lambdach_G^{pos}$ we then construct a surjective and finite map onto the strata closure $\overline{\sideset{_{\check\theta}}{_G^{princ}}{\VinBun}|_0}$ of the form
$$\bar f_{\check\theta}: \ \ \barBun_{B^-} \ \underset{\Bun_T}{\times} \ \bigl( \, X^{\check\theta} \, \times \barBun_B \bigr) \ \ \longonto \ \ \overline{\sideset{_{\check\theta}}{_G^{princ}}{\VinBun}|_0}$$
which restricts to an isomorphism on the interiors.
While the space $\barBun_B$ is itself singular, its IC-sheaf $\IC_{\barBun_B}$ is well-understood in Langlands-dual terms (\cite{FFKM}, \cite{BFGM}, \cite{BG2}). 
Broadly speaking, our main theorem about the nearby cycles of the principal degeneration $\VinBun_G^{princ} \to \BA^1$ asserts:

\medskip

\begin{theorem A}
The associated graded with respect to the weight-monodromy filtration on $\Psi^{princ}$ is equal to
$$\gr \, \Psi^{princ} \ \ \ \cong \ \ \ \bigoplus_{\check\theta \in \Lambdach_G^{pos}} \ \bar f_{\check\theta, *} \Bigl( \IC_{\barBun_{B^-}} \underset{ \, \Bun_T}{\boxtimes} \Bigl( \CF_{\check\theta} \ \boxtimes \ \IC_{\barBun_B} \Bigr) \Bigr)$$
as representations of the Lefschetz-$\sl_2$.
\end{theorem A}

\medskip

Here the $\CF_{\check\theta}$ denote certain novel perverse sheaves on the spaces of divisors $X^{\check\theta}$ which we will refer to as \textit{Picard-Lefschetz oscillators for $G$} and which govern the sheaf-theoretic description of the singularities of the principal degeneration $\VinBun_G^{princ}$. They form the correct generalization of the Picard-Lefschetz oscillators found for $G=\SL_2$ in \cite{Sch1}. In fact, for an arbitrary reductive group $G$, the $\CF_{\check\theta}$ combine various versions of the Picard-Lefschetz oscillators from \cite{Sch1} on the diagonals of $X^{\check\theta}$ in a combinatorial fashion that depends on the Langlands dual group $\check G$ of $G$. Like the Picard-Lefschetz oscillators from \cite{Sch1}, they by construction carry an action of the Lefschetz-$\sl_2$, and the above theorem asserts that the above isomorphism identifies this action with the monodromy action of the Lefschetz-$\sl_2$ on the associated graded $\gr \Psi^{princ}$.

\medskip

Unlike in the case $G = \SL_2$ studied in \cite{Sch1}, the perverse sheaf $\CF_{\thetacheck}$ is not equal to its intermediate extension from the ``disjoint locus'' of $X^{\thetacheck}$: Due to the existence of non-simple positive roots for the Langlands dual group~$\check G$ the perverse sheaf $\CF_{\thetacheck}$ possesses simple summands supported on the diagonals of $X^{\thetacheck}$. In particular, the proof strategy of \cite{Sch1} fails for an arbitrary reductive group $G$. Instead, the proof of Theorem A above given in the present work needs to ``reconstruct'' the summands on the diagonals prescribed by the Langlands dual group $\check{G}$.

\bigskip

\ssec{Vinberg fusion for local models}

Our proof of Theorem A makes use of certain \textit{local models} for the principal degeneration of $\Bun_G$ which were introduced in \cite{Sch2}; we refer to Section \ref{Proofs I} below for their construction and main properties. The local models feature the same singularities as the principal degeneration, but allow for inductive arguments due to the presence of a factorization structure in the sense of Beilinson and Drinfeld (\cite{BD1}, \cite{BD2}). They are related to the principal degeneration of $\Bun_G$ in the exact same way as the Zastava spaces from \cite{FM}, \cite{FFKM}, and \cite{BFGM} are related to Drinfeld's relative compactification $\barBun_B$, and we exploit this interplay exactly as in \cite{BFGM} or \cite{BG2}.

\medskip

In the last section of this article, Section \ref{Vinberg fusion and a geometric Hopf algebra structure}, we discuss why the geometry of these local models may be of interest in geometric representation theory, independently from their use in the proof of Theorem A. Our local models combine two quite different types of degenerations of the Zastava spaces compatibly into one total space: They are naturally fibered over the parameter spaces $X^{\check\theta} \times \BA^1$.

\medskip

The degeneration corresponding to changing the divisor in $X^{\check\theta}$ yields the usual Beilinson-Drinfeld fusion operation for the Zastava spaces; it deforms a given Zastava space to a product of Zastava spaces, analogous to a coalgebra structure.

\medskip

The degeneration corresponding to the $\BA^1$-factor is new, and yields an operation that we call \textit{Vinberg fusion}. It is obtained not by degenerating the divisor but rather by degenerating the group via the Vinberg semigroup $\Vin_G$; it deforms a product of Zastava spaces to a single Zastava space, analogous to an algebra structure.

\medskip

Using this geometric setup we furthermore show that the Vinberg fusion equips, via the induced cospecialization maps, the cohomology of the Zastava spaces with an algebra structure, while the Beilinson-Drinfeld fusion equips it with a coalgebra structure. The associativity of the algebra structure is proven geometrically via a ``double Vinberg degeneration'' over the ``square'' $\BA^1 \times \BA^1$.
Finally, we exploit the fact that both degenerations simultaneously appear in our local models to show that the algebra and coalgebra structures are compatible, i.e., we give a geometric proof that the Hopf algebra axiom is satisfied.

\medskip

It is natural to conjecture that the resulting Hopf algebra agrees with $U(\check{\mathfrak{n}})$, the universal enveloping algebra of the positive part of the Langlands dual Lie algebra $\check{\mathfrak{g}}$.
As is explained in Subsection \ref{A question regarding the Hopf algebra structure} below, it is not hard to see that this identification holds on the level of vector spaces and for the comultiplication map. These identifications, however, are only meaningful if one can also provide a geometric construction of the Langlands dual Lie bracket, i.e., the multiplication map in $U(\check{\mathfrak{n}})$; the Vinberg fusion construction of the present article forms a natural candidate for this. If this holds true, the Vinberg fusion would provide a novel and highly concrete way to pass to the Langlands dual side; this is to be contrasted with the abstract Tannakian approach of the Geometric Satake Equivalence from \cite{MV}. Indeed, in the present framework, elements of $\check{\mathfrak{g}}$ would then be realized as irreducible components of the above moduli spaces, and their Lie bracket could be computed by deforming cartesian products of these components along the Vinberg degeneration. If true, one may then speculate whether this geometric ``enrichment'' of the Langlands dual Lie algebra, and the description of $U(\check{\mathfrak{n}})$ as a cohomological shadow of this geometric enrichment, can provide further insight into Langlands duality.

\medskip

\ssec{Structure of the article}

This article is organized as follows. In Section \ref{Recollections} we review the definition and main properties of the Vinberg semigroup, of the Drinfeld-Lafforgue-Vinberg degeneration $\VinBun_G$, and of the defect stratification discussed above.
In Section \ref{Statements} we introduce the Picard-Lefschetz oscillators for arbitrary reductive groups and state the precise version of Theorem A of this introduction, Theorem \ref{main theorem for nearby cycles} below. In Section \ref{Proofs I} we recall the construction and main properties of the local models. In Section \ref{Proofs II} we give the proof of Theorem \ref{main theorem for nearby cycles}. Finally, in Section \ref{Vinberg fusion and a geometric Hopf algebra structure} we discuss the aforementioned topics related to the Vinberg fusion.

\bigskip
\bigskip

\ssec{Conventions and notation}
\label{Conventions and notation}

\sssec{Sheaves}
We will use a formalism of mixed sheaves. To be concrete, we will work with $\ell$-adic Weil sheaves: We assume the curve $X$ is defined over a finite field, and work with Weil sheaves over the algebraic closure of the finite field. Given a scheme or stack $Y$, we denote by $D(Y)$ its derived category of constructible $\Qellbar$-sheaves. We once and for all fix a square root $\Qellbar(\tfrac{1}{2})$ of the Tate twist $\Qellbar(1)$.
We normalize all IC-sheaves to be pure of weight $0$. In particular, the IC-sheaf of a smooth variety $Y$ is equal to $\Qellbar[\dim Y](\tfrac{1}{2} \dim Y)$. Given a local system $L$ on a smooth dense open subscheme $U$ of a scheme $Y$, we refer to the intermediate extension of the shifted and twisted local system $L[\dim Y](\tfrac{1}{2} \dim Y)$ to $Y$ as \textit{the IC-extension of} $L$.
Our conventions for the nearby cycles functor are stated in Subsection \ref{Recollections about nearby cycles} below.

\medskip

\sssec{Disjoint loci}
We use the symbol $\circ$ to denote the restriction of a scheme, stack, or sheaf to a ``disjoint locus'', to be understood in the appropriate sense depending on the context. As an example, we denote by
$$X^{(n_1)} \stackrel{\circ}{\times} X^{(n_2)}$$
the open subvariety of the product $X^{(n_1)} \times X^{(n_2)}$ of symmetric powers of the curve $X$ obtained by requiring that the two effective divisors have disjoint supports, and refer to it as the disjoint locus of the product $X^{(n_1)} \times X^{(n_2)}$. Similarly, given complexes $F_1 \in D(X^{(n_1)})$ and $F_2 \in D(X^{(n_1)})$ we denote by
$$F_1 \ \overset{\circ}{\boxtimes} \ F_2$$
the restriction of the exterior product $F_1 \boxtimes F_2$ to the disjoint locus of the above product.

\medskip

\sssec{Factorization structures for perverse sheaves}
\label{Factorization structures for perverse sheaves}
Consider the datum of, for each positive integer $n$, a perverse sheaf $P_n$ on the $n$-th symmetric power $X^{(n)}$ of the curve $X$. Denote by
$$\add: \ X^{(n_1)} \times X^{(n_2)} \ \longto \ X^{(n)}$$
the map defined by adding effective divisors on $X$. Then a \textit{factorization structure} on the collection of perverse sheaves $P_n$ is defined as a collection of compatible isomorphisms
$$(\add^* P_n)\big|^*_{X^{(n_1)} \stackrel{\circ}{\times} X^{(n_2)}} \ \ \cong \ \ P_{n_1} \stackrel{\circ}{\boxtimes} P_{n_2}$$
for any $n, n_1, n_2$ with $n_1 + n_2 = n$. We also simply call the collection of perverse sheaves $P_n$ \textit{factorizable} if there is no ambiguity about which factorization structure is being considered. This terminology extends to other parameter spaces indexed by a monoid, such as the spaces $X^{\check\theta}$ indexed by positive coweights $\check\theta \in \Lambdach_G^{pos}$ defined in Subsection \ref{Spaces of effective divisors} below.

\bigskip

\ssec{Acknowledgements}
I would like to express my sincere gratitude to Dennis Gaitsgory and Vladimir Drinfeld for suggesting to study the compactification $\barBun_G$, as well as for their continued encouragement and support. I would also like to thank Michael Finkelberg and Anand Patel for helpful conversations related to the content of this article.

\bigskip
\bigskip
\bigskip
\bigskip
\bigskip
\bigskip

\section{Recollections -- The degeneration $\VinBun_G$}
\label{Recollections}

\bigskip

\ssec{The Vinberg semigroup}

Given any reductive group $G$ of characteristic $0$, E. B. Vinberg (\cite{V}) has constructed a canonical algebraic semigroup, the \textit{Vinberg semigroup} $\Vin_G$ of $G$, which naturally forms a multi-parameter degeneration of $G$ over an affine space. The case of arbitrary characteristic can be found in \cite{Ri1}, \cite{Ri2}, \cite{Ri3}, \cite{Ri4}, and \cite{BKu}. We now sketch the definition of $\Vin_G$ as well as some properties relevant to the present work, which only utilizes a one-parameter sub-family of the multi-parameter family $\Vin_G$ which we call the \textit{principal degeneration} of $G$. We refer the reader to \cite{Sch2} for a discussion of $\Vin_G$ that is not focused on the principal degeneration, and to \cite{Pu}, \cite{Re}, \cite{DrG2} and the above sources for more background and proofs.

\sssec{Notation related to the group}
Let $G$ be a reductive group over $k$. Let $Z_G$ denote the center of $G$ and let $r$ denote the semisimple rank of $G$. For simplicity we assume that the derived group $[G,G]$ of $G$ is simply connected. Fix a maximal torus $T$ of $G$ and a Borel subgroup $B$ containing $T$. Let $N$ denote the unipotent radical of $B$. Let $W$ denote the Weyl group of $G$ and let $w_0$ denote its longest element. We denote by $\Lambda_G$ the weight lattice of $G$, by $\Lambdach_G$ the coweight lattice of $G$, by $\CI$ the set of vertices of the Dynkin diagram of $G$, by $(\alpha_i)_{i \in \CI} \in \Lambda_G$ the simple roots, and by $(\alphacheck_i)_{i \in \CI} \in \Lambdach_G$ the simple coroots. We denote by $\Lambda_G^+$ the set of dominant weights and by $\Lambda_G^{pos}$ the set of positive weights, and analogously for $\Lambdach_G$. We denote by $\leq$ the usual partial order on $\Lambda_G$ and $\Lambdach_G$. Finally, we define the \textit{enhanced group} of $G$ as
$$G_{enh} \ = \ (G \times T) /Z_{G} \, ;$$
here the center $Z_G$ of $G$ acts anti-diagonally on $G \times T$, i.e., via the formula $(g,t).z = (zg, z^{-1}t)$. The inclusion of the first coordinate
$$G \ \longinto \ G_{enh}$$
realizes $G$ as a subgroup of $G_{enh}$.

\medskip

\sssec{Definition of $\Vin_G$ via classification of reductive monoids}
\label{Definition of Vin_G via classification of reductive monoids}
The Vinberg semigroup $\Vin_G$ is an affine algebraic monoid; its group of units is open and dense, and is equal to the reductive group $G_{enh}$. We now recall its definition via the classification of \textit{reductive monoids}, i.e., the classification of irreducible affine algebraic monoids whose group of units is dense, open, and a reductive group.
To do so, denote by $\Rep(G_{enh})$ the category of finite-dimensional representations of the enhanced group $G_{enh}$.
According to the classification of reductive monoids (see \cite{Pu}, \cite{Re}, \cite{V}, \cite{DrG2}), the monoid $\Vin_G$ is uniquely determined by the full subcategory
$$\Rep(\Vin_G) \ \subset \ \Rep(G_{enh})$$
consisting of all representations $V \in \Rep(G_{enh})$ with the property that the $G_{enh}$-action extends to an action of the monoid $\Vin_G$.
To define $\Vin_G$ it thus suffices to specify the full subcategory $\Rep(\Vin_G)$ of $\Rep(G_{enh})$.
To do so, we first introduce the following notation. Any representation $V$ of $G_{enh}$ admits a canonical decomposition as $G_{enh}$-representations
$$V \ = \ \bigoplus_{\lambda \in \Lambda_T} V_\lambda$$
according to the action of the center $Z_{G_{enh}} = (Z_G \times T)/Z_G = T$, i.e., such that the center $Z_{G_{enh}} = T$ acts on the summand $V_\lambda$ via the character $\lambda$. Each summand $V_\lambda$ in this decomposition also naturally forms a $G$-representation via the inclusion $G \into G_{enh}$, whose central character as a $G$-representation equals the restriction $\lambda|_{Z_G}$.
We now define the subcategory $\Rep(\Vin_G)$ of $\Rep(G_{enh})$: A representation $V \in \Rep(G_{enh})$ lies in $\Rep(\Vin_G)$ if and only if for each $\lambda \in \Lambda_T$ the weights of the summand $V_{\lambda}$, considered as a $G$-representation, are all $\leq \lambda$.

\medskip

\sssec{Some first properties}
The Vinberg semigroup $\Vin_G$ is a normal algebraic variety. It comes equipped with a natural $G \times G$-action extending the natural $G \times G$-action on $G_{enh}$. It furthermore comes equipped with a natural $T$-action extending the $T$-action on $G_{enh} = (G \times T) / Z_G$ defined by acting on the second factor. This $T$-action commutes with the $G \times G$-action, and will simply be referred to as \textit{the} $T$-action on $\Vin_G$.

\medskip

We now recall that the Vinberg semigroup $\Vin_G$ forms a canonical multi-parameter degeneration of the group $G$. First, let $T_{adj} = T/Z_G$ denote the adjoint torus of $G$, and recall that the collection of simple roots $(\alpha_i)_{i \in \CI}$ of $G$ give rise to a canonical isomorphism
$$T_{adj} \ \stackrel{\cong}{\longto} \ \BG_m^r \, .$$
Thus the simple roots form canonical affine coordinates on $T_{adj}$. We then obtain a canonical semigroup completion $T_{adj}^+$ of $T_{adj}$ by defining
$$T_{adj}^+ \ := \ \BA^r \ \supset \ \BG_m^r \ = T_{adj} \, ;$$
here the semigroup structure on $\BA^r$ is defined by component-wise multiplication. The natural $T$-action on $T_{adj}$ extends to a $T$-action on $T_{adj}^+$.

\medskip

With this notation, the Vinberg semigroup $\Vin_G$ then admits a natural flat homomorphism of semigroups
$$v: \ \Vin_G \ \longto \ T_{adj}^+ = \BA^r$$
which extends the natural projection map $G_{enh} \longto T_{adj}$ and which is $G \times G$-invariant and $T$-equivariant for the above $T$-actions on $\Vin_G$ and on $T_{adj}^+$. The fiber of this map $v$ over the point $1 \in T_{adj}^+$ is canonically identified with the group $G$; see Subsection \ref{G-locus and B-locus} below for a stronger and more precise statement.

\medskip

\sssec{The canonical section}
\label{The canonical section}
Our fixed choice of a maximal torus $T \subset B \subset G$ gives rise to a section
$$\Fs: \ T_{adj}^+ \ \longto \ \Vin_G$$
of the map
$$v: \ \Vin_G \ \longto \ T_{adj}^+ \, ,$$
which can be uniquely characterized as follows. First note that the map
$$T \ \longto \ G \times T \, , \ \ t \ \longmapsto \ (t^{-1}, t)$$
descends to a map $T_{adj} \longto G_{enh}$; the latter map forms a section of the map $G_{enh} \longto T_{adj}$. Then one can show that this section extends to the desired section $\Fs$ of the map $v$; the image under $\Fs$ of any point in $T_{adj}^+$ in fact lies in the open $G \times G$-orbit of the corresponding fiber of the map $v$.
This implies that the section $\Fs$ factors through the \textit{non-degenerate locus} $\sideset{_0}{_G}\Vin$ of $\Vin_G$, which we recall in the next subsection.

\sssec{The non-degenerate locus}
The Vinberg semigroup contains a natural dense open subvariety $\sideset{_0}{_G}\Vin \subset \Vin_G$, the \textit{non-degenerate locus} of $\Vin_G$; it is characterized uniquely by the fact that it meets each fiber of the map $v: \Vin_G \to T_{adj}^+$ in the open $G \times G$-orbit of that fiber.
The open subvariety $\sideset{_0}{_G}\Vin$ of $\Vin_G$ is in fact not only $G \times G$-stable but also $T$-stable. The restriction of the map $v$ to $\sideset{_0}{_G}\Vin$ is smooth.

\sssec{The stratification parametrized by parabolics}
\label{The stratification parametrized by parabolics}
The completed adjoint torus $T_{adj}^+ = \BA^r$ carries the usual coordinate stratification. Its strata are stable under the action of $T$, and are naturally indexed by subsets of the Dynkin diagram $\CI$ of $G$, or equivalently by standard parabolic subgroups of $G$:
$$T_{adj}^+ \ \ = \ \ \bigcup_{P} \ T^+_{adj, P} \ .$$
Each stratum $T^+_{adj, P}$ of this stratification contains a canonical point $c_P$, as we now recall. Let $\CI_P \subset \CI$ denote the subset of $\CI$ consisting of those vertices corresponding to the parabolic $P$. Then using the canonical identification $T_{adj}^+ = \BA^r$ we define $(c_P)_i = 1$ if $i \in \CI_P$ and $(c_P)_i = 0$ if $i \notin \CI_P$. In particular we have $c_G = 1 \in T_{adj}$ and $c_B = 0 \in T_{adj}^+$.
Via pullback along the map $v$ this stratification of $T_{adj}^+$ induces a stratification
$$\Vin_G \ \ = \ \ \bigcup_{P} \ \Vin_{G,P} \ .$$

\medskip

\sssec{The $G$-locus and the $B$-locus}
\label{G-locus and B-locus}
Note that the $G$-locus $\Vin_{G,G}$ of $\Vin_G$ satisfies
$$\Vin_{G,G} \ = \ G_{enh} \ = \ (G \times T) / Z_G \ = \ G \times T_{adj}$$
as varieties over $T_{adj}$, where the last identification is induced by the map
$$(g,t) \ \mapsto \ (g t^{-1}, t) \, .$$

\medskip

Next we recall a description of the $B$-locus $\Vin_{G,B}$; similar descriptions can be given for the $P$-loci $\Vin_{G,P}$ for arbitrary parabolics $P$ of $G$, but only the case $P=B$ will be needed for the present article.
To describe the $B$-locus $\Vin_{G,B}$, recall first that a scheme Z over $k$ is called {\it strongly quasi-affine} if its ring of global functions $\Gamma(Z, \CO_Z)$ is a finitely generated $k$-algebra and if the natural map
$$Z \ \longto \ \overline{Z} \ := \ \Spec (\Gamma(Z, \CO_Z))$$
is an open immersion. For a strongly quasi-affine variety $Z$ we refer to $\overline{Z}$ as its  {\it affine closure}. With this notation we have:

\medskip

\begin{lemma}
Let the maximal torus $T = B/N$ of $G$ act diagonally on the right on the product $G/N \times G/N^-$. Then the quotient
$$(G/N \times G/N^-)/T$$
is strongly quasi-affine.
\end{lemma}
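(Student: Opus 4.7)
The plan is to exhibit $Y := (G/N \times G/N^-)/T$ as an open subscheme of the spectrum of the invariant ring $A := (\CO(G/N) \otimes \CO(G/N^-))^T$, via affine GIT for the extended $T$-action on the affine closures.

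First I would recall the standard fact that $G/N$ and $G/N^-$ are each strongly quasi-affine, with affine closures $\overline{G/N} = \Spec \bigoplus_{\lambda \in \Lambda_G^+} V_\lambda$ and $\overline{G/N^-} = \Spec \bigoplus_{\lambda \in \Lambda_G^+} V_\lambda^*$, and with complements of codimension $\geq 2$; the product $G/N \times G/N^-$ is then strongly quasi-affine with affine closure $\overline{G/N} \times \overline{G/N^-}$, and the right diagonal $T$-action extends to this affine closure. A direct check shows that the $T$-action on $G/N \times G/N^-$ is free, since the stabilizer of $(xN, yN^-)$ equals $N \cap N^- = \{1\}$; consequently the geometric quotient $Y$ exists as a smooth scheme and the quotient map is a Zariski-locally trivial $T$-torsor, so that $\CO(Y) = A$ by descent, and $A$ is finitely generated as a $k$-algebra by Hilbert's theorem on invariants of the reductive group $T$ acting on the finitely generated algebra $\CO(\overline{G/N} \times \overline{G/N^-})$.

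It remains to show that the natural morphism $Y \to \Spec A$ is an open immersion. The key claim is that every right $T$-orbit in $G/N \times G/N^-$ is closed in $\overline{G/N} \times \overline{G/N^-}$: under the right $T$-action, the subspace $V_\lambda \subset \CO(\overline{G/N})$ is a single $T$-weight space of weight $-w_0\lambda$, while $V_\lambda^* \subset \CO(\overline{G/N^-})$ is a single $T$-weight space of weight $-\lambda$; and since for any $x \in G/N$ and any fundamental weight $\omega_i$ of $G$ the evaluation $V_{\omega_i} \to k$, $v \mapsto v(x)$, is non-zero (by Frobenius reciprocity it corresponds to a non-zero vector in $V_{\omega_i}^*$), the support of $x$ in $\CO(\overline{G/N})$ contains the weight $-w_0\omega_i$ for each $i$, and analogously the support of $y \in G/N^-$ contains the weight $-\omega_i$ for each $i$. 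Hence for a cocharacter $\nu \colon \BG_m \to T$, the existence of $\lim_{s \to 0} \nu(s) \cdot (x,y)$ in the affine closure forces $\nu$ to be simultaneously dominant and anti-dominant, so $\nu = 0$. Standard affine GIT then implies that the $T$-invariant closed complement $(\overline{G/N} \times \overline{G/N^-}) \setminus (G/N \times G/N^-)$ has closed image $Z' \subset \Spec A$, that the image of $G/N \times G/N^-$ in $\Spec A$ coincides with the open subscheme $\Spec A \setminus Z'$, and that each fiber over $\Spec A \setminus Z'$ consists of a single $T$-orbit; together with freeness, this exhibits $\Spec A \setminus Z'$ as a second geometric quotient of $G/N \times G/N^-$ by $T$, and uniqueness of geometric quotients yields $Y \cong \Spec A \setminus Z'$. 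The main obstacle of this approach is the orbit-closedness step, which crucially relies on the opposite weight structures of $\CO(G/N)$ and $\CO(G/N^-)$ under the diagonal right $T$-action.
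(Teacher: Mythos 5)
The paper itself offers no proof of this lemma: it is stated as a recollection, with the surrounding discussion deferring to \cite{BG1}, \cite{W1}, \cite{Sch2}, so your argument can only be judged on its own terms. Your overall strategy is the natural one and is essentially how such statements are usually proved: pass to the affine closure $\overline{G/N} \times \overline{G/N^-}$, take the affine GIT quotient by $T$ with invariant ring $A$, and show that the open part maps isomorphically onto an open subscheme of $\Spec A$ because every $T$-orbit in $G/N \times G/N^-$ is already closed in the affine closure. The GIT bookkeeping (finite generation of $A$, closedness of the image $Z'$ of the invariant boundary, fibers over $\Spec A \setminus Z'$ being single closed orbits, uniqueness of geometric quotients) is fine, as is freeness of the action -- though the stabilizer of $(xN, yN^-)$ in $T$ is $(T \cap N) \cap (T \cap N^-)$ rather than $N \cap N^-$; each factor alone already gives triviality.

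There is, however, a genuine gap at the end of your key orbit-closedness step: from the vanishing of $\langle \omega_i, \nu \rangle$ for all fundamental weights (your ``simultaneously dominant and anti-dominant'') you conclude $\nu = 0$. This is false unless $G$ is semisimple: the fundamental weights span only the semisimple part of $\Lambda_G \otimes \BQ$, so for $G$ with positive-dimensional center (e.g. $G = \GL_n$, which is allowed here -- the paper only assumes $[G,G]$ simply connected) you only learn that $\nu$ is a central cocharacter, and a nonzero central $\nu$ does act nontrivially on $G/N \times G/N^-$, so it is not excluded by your constraints as written. The claim is nevertheless true, and your own method repairs it: for $x \in G/N$ the evaluation on the graded piece attached to \emph{every} dominant $\lambda \in \Lambda_G^+$ is nonzero (since $g v_\lambda \neq 0$), not just for the fundamental weights; in particular the support of $(x,y)$ contains the weights attached to both $\chi$ and $-\chi$ for every character $\chi$ of $G$, as both are dominant. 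Since $-w_0$ preserves the dominant cone and that cone spans $\Lambda_G \otimes \BQ$, existence of the limit forces $\langle \lambda, \nu \rangle = 0$ for all dominant $\lambda$, hence $\nu = 0$ -- equivalently, every weight in the support pairs to zero with $\nu$, which is exactly what closedness of the orbit requires. With that adjustment your proof is complete.
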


\medskip

We denote by $\overline{(G/N \times G/N^-)/T}$ the corresponding affine closure. We can then recall (see e.g. \cite[Sec. 4.2]{W1}):
\medskip

\begin{lemma}
\label{Vinberg lemma}
There exists a canonical isomorphism
$$\overline{(G/N \times G/N^-)/T} \ \ \ \stackrel{\cong}{\longto} \ \ \ \Vin_G|_{c_B} = \Vin_{G,B}$$
which is $G \times G$-equivariant for the natural $G \times G$-actions and which restricts to an isomorphism
$$(G/N \times G/N)/T \ \ \ \stackrel{\cong}{\longto} \ \ \ \sideset{_0}{_G}\Vin|_{c_B} \, .$$
\end{lemma}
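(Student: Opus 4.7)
The plan is to realize the isomorphism via the canonical section $\Fs$ of Subsection \ref{The canonical section}, and then to upgrade an open embedding to an isomorphism of affine closures by comparing rings of global sections using Vinberg's intrinsic description of $\Rep(\Vin_G)$ from Subsection \ref{Definition of Vin_G via classification of reductive monoids}. First I would use $\Fs$ to define the $G \times G$-equivariant morphism
$$\phi: \ G \times G \ \longto \ \Vin_{G,B} \, , \qquad (g_1, g_2) \ \longmapsto \ (g_1, g_2) \cdot \Fs(c_B) \, ,$$
using the $G \times G$-action on $\Vin_G$. Since $\Fs(c_B)$ lies in the open $G \times G$-orbit of $\Vin_{G,B}$ by Subsection \ref{The canonical section}, this map surjects onto the non-degenerate locus $\sideset{_0}{_G}\Vin|_{c_B}$. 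A direct computation of the stabilizer of $\Fs(c_B)$, using the explicit formula $\Fs(t) = [(t^{-1}, t)] \in G_{enh}$ and passing to the limit $t \to 0$ inside $\Vin_G$, identifies it with the subgroup whose projection to $G/N \times G/N^-$ lies over a single $T$-orbit; hence $\phi$ factors through a $G \times G$-equivariant isomorphism of $(G/N \times G/N^-)/T$ onto $\sideset{_0}{_G}\Vin|_{c_B}$, which gives the second display in the lemma.

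To upgrade this open embedding to an isomorphism on affine closures, I would compare rings of global sections on both sides. On the Vinberg side, Peter-Weyl applied to the dense open $G_{enh} \subset \Vin_G$, combined with Vinberg's condition ``$\mu \leq \lambda$'' from Subsection \ref{Definition of Vin_G via classification of reductive monoids}, produces a $G \times G \times T$-graded decomposition of $\Gamma(\Vin_G, \CO)$. Specializing to $c_B$ by modding out by the ideal generated by the simple roots on $T_{adj}^+$ kills every summand with $\mu < \lambda$ and leaves only the ``diagonal''
$$\Gamma(\Vin_{G,B}, \CO) \ \cong \ \bigoplus_{\mu \in \Lambda_G^+} V_\mu \otimes V_\mu^* \, .$$
On the other side, the standard decompositions of $\Gamma(G/N, \CO)$ and $\Gamma(G/N^-, \CO)$ as $G \times T$-modules, combined with taking $T$-invariants of the tensor product, yield
$$\Gamma\bigl( (G/N \times G/N^-)/T, \, \CO \bigr) \ \cong \ \bigoplus_{\mu \in \Lambda_G^+} V_\mu^* \otimes V_\mu \, .$$
Since the resulting ring is manifestly a finitely generated $k$-algebra, this computation simultaneously establishes the strong quasi-affineness claim of the preceding lemma.

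The main obstacle is verifying that these abstractly isomorphic $G \times G$-modules correspond as algebras under the morphism induced by $\phi$, not merely as vector spaces with group action; morally, the Vinberg condition ``$\mu \leq \lambda$'' is a shadow of the Cartan multiplication $V_\mu \otimes V_{\mu'} \onto V_{\mu+\mu'}$, but pinning this down directly is delicate. I would circumvent the algebraic verification geometrically: $\Vin_{G,B}$ is irreducible, normal, and of the expected dimension (all inherited from $\Vin_G$), and $\overline{(G/N \times G/N^-)/T}$ is by construction the affine closure of its dense open $(G/N \times G/N^-)/T$. The $G \times G$-equivariant open embedding already established thus extends uniquely to a $G \times G$-equivariant morphism of affine varieties, which a dimension count combined with Zariski's Main Theorem forces to be an isomorphism. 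This yields the first display of the lemma and, by construction, restricts on the dense open to the isomorphism of the second display.
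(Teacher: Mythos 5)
The paper does not actually prove this lemma; it is recalled from \cite[Sec. 4.2]{W1}, so your proposal can only be measured against the standard argument there, which it follows in outline: identify the open $G\times G$-orbit of the fiber via the section $\Fs$, then compare rings of global functions using Peter--Weyl and Vinberg's condition $\mu\leq\lambda$. That architecture is right, and your computation of $\Gamma(\Vin_{G,B},\CO)$ as $\bigoplus_{\mu}V_\mu\otimes V_\mu^*$ (the ideal generated by the simple roots kills every summand with $\mu<\lambda$) is correct. But two steps are not sound as written. First, irreducibility, reducedness and normality of the special fiber $\Vin_{G,B}$ are \emph{not} ``inherited from $\Vin_G$'': fibers of a flat morphism from a normal variety can be non-reduced and non-normal. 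That these properties hold for the fibers of $v$ is a genuine theorem of Vinberg/Rittatore and has to be quoted, not deduced. Second, the closing step fails: a dominant birational morphism of affine varieties of the same dimension with normal target is not forced to be an isomorphism by Zariski's Main Theorem (an open immersion $D(f)\into\BA^n$ is a counterexample); you would need finiteness of $\overline{(G/N\times G/N^-)/T}\to\Vin_{G,B}$, or the fact that the complement of the open orbit has codimension $\geq 2$, neither of which you establish.

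The irony is that the ingredients you already computed close the gap without ZMT, and also dissolve your stated worry about the algebra structures. The comparison map is restriction of functions along the dense open orbit, so it is automatically an algebra homomorphism; there is nothing to check there. It is injective because the orbit is dense and $\Vin_{G,B}$ is reduced and irreducible (again, quoted facts). Both source and target are multiplicity-free $G\times G$-modules with the same set of isotypic components $V_\mu\boxtimes V_\mu^*$, so by Schur's lemma an injective equivariant map between them is bijective. Hence $k[\Vin_{G,B}]\isoto k[(G/N\times G/N^-)/T]$ as algebras, which is exactly the first display of the lemma, and the second display is your orbit identification. With the fiber properties properly sourced and the last step replaced by this multiplicity-one argument, the proof is complete.
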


\bigskip

\sssec{The example $G = \SL_2$}
\label{The case G = SL_2}

For $G = \SL_2$ the Vinberg semigroup $\Vin_G$ is equal to the semigroup of $2 \times 2$ matrices~$\Mat_{2 \times 2}$. The $\SL_2 \times \SL_2$-action is given by left and right multiplication; the action of $T = \BG_m$ is given by scalar multiplication. The homomorphism of semigroups $v$ is equal to the determinant map
$$v: \ \Vin_G = \Mat_{2 \times 2} \ \stackrel{\det}{\longto} \ \BA^1 = T_{adj}^+ \, .$$
In particular we find that
$$\Vin_{G,G} \ = \ v^{-1}(\BA^1 \setminus \{ 0\}) \ \cong \ \GL_2 \, ,$$
and that the $B$-locus
$\Vin_{G,B} = v^{-1}(0)$ consists of all singular $2 \times 2$ matrices. The non-degenerate locus $\sideset{_0}{_G}\Vin$
is equal to the subset
of non-zero matrices
$$\Mat_{2 \times 2} \setminus \{0\} \ \ \subset \ \ \Mat_{2 \times 2}.$$

\bigskip

\ssec{The degeneration $\VinBun_G$}

We can now recall the definition of the Drinfeld-Lafforgue-Vinberg degeneration $\VinBun_G$ from \cite{Sch2}; the definition of the Drinfeld-Lafforgue-Vinberg compactification $\barBun_G$, which is also given in \cite{Sch2} and of which $\VinBun_G$ is a minor modification, is due to Drinfeld (unpublished).

\sssec{Notation}
Let $G$ be a reductive group over $k$ and let $X$ be a smooth projective curve over $k$.
For any stack $\CY$ the sheaf of groupoids $\Maps(X,\CY)$ parametrizing maps from $X$ to $\CY$ is defined as
$$\Maps(X, \CY)(S) \ = \ \CY(X \times S) \, .$$
For example, we have $\Bun_G = \Maps(X, \cdot/G)$.
Next, for an open substack $\overset{\circ}{\CY} \subset \CY$, the sheaf of groupoids
$\Maps_{gen}(X, \CY \supset \overset{\circ}{\CY})$
assigns to a scheme $S$ the full sub-groupoid of $\Maps (X, \CY)(S)$ consisting of all maps $X \times S \to \CY$ satisfying the following condition: We require that for every geometric point $\bar s \to S$ there exists an open dense subset of $X \times \bar{s}$ on which the restricted map $X \times \bar s \to \CY$ factors through the open substack~$\overset{\circ}{\CY} \subset \CY$. 

\medskip

\sssec{Definition of $\VinBun_G$}
Consider the open substack
$$\sideset{_0}{_G}\Vin / G \times G \ \ \ \subset \ \ \ \Vin_G / G \times G$$
obtained by quotienting out by the $G \times G$-action.
Then the \textit{Drinfeld-Lafforgue-Vinberg degeneration} $\VinBun_G$ is defined as
$$\VinBun_G \ \ := \ \ \Maps_{gen} \, (X, \ \text{Vin}_G / G \times G \ \supset \ \sideset{_0}{_G}\Vin / G \times G) \, .$$
As the curve $X$ is assumed to be proper, the map $v: \Vin_G \longto T_{adj}^+$ induces a map
$$v: \ \VinBun_G \ \longto \ T_{adj}^+  = \BA^r$$
which makes $\VinBun_G$ into a multi-parameter degeneration of $\Bun_G$: Any fiber of the map $v$ over a point in $T_{adj} \subset T_{adj}^+$ is isomorphic to $\Bun_G$.

\medskip

\sssec{The example $G = \SL_2$}
For $G=\SL_2$ an $S$-point of $\VinBun_G$ consists of the data of two vector bundles $E_1$, $E_2$ of rank $2$ on $X \times S$, together with trivializations of their determinant line bundles $\det E_1$ and $\det E_2$, and a map of coherent sheaves
$$\varphi: \ E_1 \ \longto \ E_2 \, ,$$
satisfying the condition that for each geometric point $\bar{s} \to S$ the map
$$\varphi|_{X \times \bar{s}} : \ \ E_1|_{X \times \bar{s}} \ \longto \ E_2|_{X \times \bar{s}}$$
is not the zero map. In other words, for each geometric point $\bar{s} \to S$ the map $\varphi|_{X \times \bar{s}}$ is required to not vanish generically on the curve $X \times \bar{s}$. The map $v: \VinBun_G \longto \BA^1$ is obtained by sending the above data to the point $\det(\varphi) \in \BA^1(S)$.

\medskip

\sssec{The defect-free locus of $\VinBun_G$}
\label{The defect-free locus of VinBun_G}
The \textit{defect-free locus} of $\VinBun_G$ is defined as the open substack
$$\sideset{_0}{_G}\VinBun \ \ := \ \ \Maps(X, \sideset{_0}{_G}\Vin / G \times G) \, .$$
From Lemma \ref{Vinberg lemma} above we see that its fiber over the point $c_B = 0$ equals
$$\sideset{_0}{_{G}}\VinBun |_{c_B} \ \ = \ \ \Bun_{B^-} \underset{\Bun_T}{\times} \Bun_B \, .$$
It is not hard to show that the restriction of the map $v$ to the defect-free locus
$$v: \ \sideset{_0}{_G}\VinBun \ \ \longto \ \ T_{adj}^+$$
is smooth. In particular, the defect-free locus $\sideset{_0}{_G}\VinBun$ itself is smooth.

\medskip

\sssec{Stratification by parabolics}
\label{Stratification by parabolics}
Since $T_{adj}^+$ carries a natural stratification indexed by parabolic subgroups $P$ of $G$, we obtain via pullback along the map $v$ an analogous stratification
$$\VinBun_G \ \ = \ \ \bigcup_{P} \ \VinBun_{G,P} \, .$$
Only the strata $\VinBun_{G,G}$ and $\VinBun_{G,B}$ will appear in the present work. It is not hard to see that the $G$-locus $\VinBun_{G,G}$ forms a canonically trivial fiber bundle over $T_{adj}$:
$$\VinBun_{G,G} \ \ = \ \ \Bun_G \times \, T_{adj}$$
We will introduce a \textit{defect stratification} of the $B$-locus $\VinBun_{G,B}$ in Subsection \ref{The defect stratification} below. For analogous stratifications of the strata $\VinBun_{G,P}$ for arbitrary proper parabolics $P$ we refer the reader to \cite[Sec. 3]{Sch2}.

\bigskip

\ssec{The defect stratification of the $B$-locus}
\label{The defect stratification}

We now recall the aforementioned \textit{defect stratification} of the $B$-locus $\VinBun_{G,B} = \VinBun_G|_{c_B}$.

\sssec{The monoid $\overline{T}$}
\label{The monoid barT}

First we review the definition of a certain monoid $\overline{T}$ containing the maximal torus $T$ as a dense open subgroup; we refer the reader to \cite{BG1}, \cite{W1}, and \cite{Sch2} for proofs, additional background, and the case of an arbitrary Levi subgroup.
First recall from e.g. \cite{BG1} that the quotient $G/N$ is strongly quasi-affine, and denote its affine closure by $\overline{G/N}$. The monoid $\overline{T}$ is then defined as the closure of $T$ inside $\overline{G/N}$ under the embedding
$$T = B/N \ \longinto \ G/N \ \subset \ \overline{G/N} \, .$$
The $T$-actions from the left and right on $G/N$ give rise to $T$-actions from the left and right on $\overline{T}$; these $T$-actions in turn extend to $\overline{T}$-actions, so that $\overline{T}$ indeed forms an algebraic monoid containing the group $T$. One can also define $\overline{T}$ as follows: Instead of the tautological embedding of $T = B^-/N^-$ into $G/N^-$, consider the embedding given by the inverse:
$$T \ \longinto \ G/N^- \, , \ \ \ t \ \longmapsto \ t^{-1}$$
One can then also define $\overline{T}$ as the closure of $T$ under this embedding into $\overline{G/N^-}$.

\medskip

\sssec{The embedding of $\overline{T}$ into $\Vin_G$}
\label{The embedding of barT into Vin_G}
Recall that the embedding of the first factor
$$G/N \ \longinto \ (G/N \times G/N^-)/T$$
and the embedding of the second factor
$$G/N^- \ \longinto \ (G/N \times G/N^-)/T$$
extend to closed immersions
$$\overline{G/N} \ \ \longinto \ \ \overline{(G/N \times G/N^-)/T}$$
and
$$\overline{G/N^-} \ \ \longinto \ \ \overline{(G/N \times G/N^-)/T} \, .$$
Then one can show that the two closed embeddings
$$\overline{T} \ \ \longinto \ \ \overline{(G/N \times G/N^-)/T} \ = \ \Vin_{G,B}$$
of $\overline{T}$ obtained by composing the previous embeddings with the embeddings of $\overline{T}$ into $\overline{G/N}$ and $\overline{G/N^-}$ from Subsection \ref{The monoid barT} above coincide. This embedding is $T \times T$-equivariant for the natural $T \times T$-action on $\overline{T}$ and the $T \times T$-action on $\overline{(G/N \times G/N^-)/T}  = \Vin_{G,B}$ obtained by restricting the $G \times G$-action to the subgroup $T \times T$.

\medskip

\sssec{Spaces of effective divisors}
\label{Spaces of effective divisors}
For any positive integer $n$ we denote the $n$-th symmetric power of the curve $X$ by $X^{(n)}$. Given a positive coweight $\thetacheck  = \sum_{i \in \CI} n_i \alphacheck_i \in \Lambdach_G^{pos}$ of $G$ we define
$$X^{\thetacheck} \ \ = \ \ \prod_{i \in \CI} X^{(n_i)} \, .$$
As a variety, the space $X^{\thetacheck}$ is a partially symmetrized power of the curve $X$. It can be thought of as the space of $\Lambdach_G^{pos}$-valued divisors on $X$, i.e., as the space of formal linear combinations $\sum_{k} \thetacheck_k x_k$ with $x_k \in X$ and $\thetacheck_k \in \Lambdach_G^{pos}$ satisfying $\sum_k \thetacheck_k = \thetacheck$.
The spaces $X^{\check\theta}$ constitute the connected components of the mapping stack $\Maps_{gen}(X, \overline{T}/T \supset T/T = pt)$:
$$\Maps_{gen}(X, \overline{T}/T \supset T/T = pt) \ \ = \ \ \bigcup_{\check\theta \in \Lambdach_G^{pos}} X^{\check\theta} \, .$$

\medskip

\sssec{Strata maps}
\label{Strata maps}
The closed immersion
$$\overline{T} \ \ \longinto \ \ \overline{(G/N \times G/N^-)/T} \ = \ \Vin_{G,B}$$
from Subsection \ref{The embedding of barT into Vin_G} above induces a map of quotient stacks
$$\overline{T} / (B \times B^-) \ \ \longto \ \ \Bigl( \overline{(G/N \times G/N^-)/T} \Bigr) / (G \times G) \, .$$
which by Lemma \ref{Vinberg lemma} in turn induces the desired strata map
$$f: \ \Maps_{gen}(X, \, \overline{T} / (B \times B^-) \, \supset \, T / (B \times B^-)) \ \ \longto \ \ \VinBun_{G,B} \, .$$
To describe the source of the map $f$ more explicitly, first note that the quotient stack $ \overline{T} / (B \times B^-)$ can be rewritten as
$$\overline{T} / (B \times B^-) \ \ = \ \ \cdot / B^{-} \ \underset{\cdot / T}{\times} \ \overline{T} / (T \times T) \ \underset{\cdot / T}{\times} \ \cdot / B \ \ = \ \ \cdot / B^{-} \ \underset{\cdot / T}{\times} \ \bigl( \overline{T} / T \ \times \ \cdot / B \bigr) \, ,$$
where the map $\overline{T} / T \times \cdot / B \to \cdot / T$ used in the last fiber product factors as
$$\overline{T} / T \ \times \ \cdot / B \ \stackrel{\text{forget}}{\longto} \ \cdot/T \times \cdot/T \ \stackrel{\text{multiply}}{\longto} \ \cdot / T \, .$$
Thus the source of the map $f$ decomposes into a disjoint union of connected components
$$\bigcup_{(\lambdach_1, \lambdach_2, \check\theta)} \Bun_{B^-, \lambdach_1} \ \underset{\Bun_T}{\times} \ \bigl( X^{\check\theta} \ \times \ \Bun_{B,\lambdach_2} \bigr) \, ,$$
where $\lambdach_1, \lambdach_2 \in \Lambdach_G = \pi_0(\Bun_{B^-}) = \pi_0(\Bun_{B})$ and $\lambdach_2 - \check\theta = \lambdach_1$. Here the map $X^{\check\theta} \times \Bun_{B,\lambdach_2} \longto \Bun_T$ used to define the fiber product factors as
$$X^{\check\theta} \ \times \ \Bun_{B,\lambdach_2} \ \stackrel{\text{forget}}{\longto} \ X^{\check\theta} \ \times \ \Bun_{T,\lambdach_2} \ \stackrel{\text{twist}}{\longto} \ \Bun_{T, \lambdach_2 - \check\theta}$$
where by \textit{twist} we denote the usual operation of twisting a $T$-bundle by a $\Lambdach_G^{pos}$-valued divisor.
We will denote by $f_{\lambdach_1, \check\theta, \lambdach_2}$ the restriction of $f$ to the connected component corresponding to the triple $(\lambdach_1, \check\theta, \lambdach_2)$ in the above decomposition.

\medskip

It is shown in \cite[Proposition 3.3.2]{Sch2}:

\medskip

\begin{proposition}
\label{defect stratification proposition}
The maps $f_{\lambdach_1, \check\theta, \lambdach_2}$ are locally closed immersions. We will denote the corresponding locally closed substacks by
$$\sideset{_{\lambdach_1, \check\theta, \lambdach_2}}{_{G,B}}\VinBun \ \longinto \ \VinBun_{G,B} \, .$$
Furthermore, the locally closed substacks $\sideset{_{\lambdach_1, \check\theta, \lambdach_2}}{_{G,B}}\VinBun$ form a stratification of $\VinBun_{G,B}$ in the following sense: On the level of $k$-points, the stack $\VinBun_{G,B}$ is equal to the disjoint union
$$\VinBun_{G,B} \ \ = \ \ \bigcup_{(\lambdach_1, \check\theta, \lambdach_2)} \ \sideset{_{\lambdach_1, \check\theta, \lambdach_2}}{_{G,B}}\VinBun \, ,$$
where the union runs over all $\lambdach_1, \lambdach_2 \in \Lambdach_G$ and $\check\theta \in \Lambdach_G^{pos}$ such that $\lambdach_2 - \check\theta = \lambdach_1$.
\end{proposition}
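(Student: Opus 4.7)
The plan is to prove the two assertions in the proposition separately: first, that each $f_{\lambdach_1, \check\theta, \lambdach_2}$ is a locally closed immersion, and second, that the resulting locally closed substacks stratify $\VinBun_{G,B}$ at the level of $k$-points. Both assertions reduce to a careful analysis of the closed embedding $\overline{T} \hookrightarrow \Vin_{G,B}$ from Subsection \ref{The embedding of barT into Vin_G}, combined with the general formalism of $\Maps_{gen}$.

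First, I would establish the following local geometric statement: the map $\overline{T}/(B \times B^-) \longto \Vin_{G,B}/(G \times G)$ is a bijection on $k$-points, and its restriction to each $T \times T$-orbit of $\overline{T}$ is a locally closed immersion of stacks. The key input is that, just as in Lemma \ref{Vinberg lemma} every $G \times G$-orbit of $\sideset{_0}{_G}\Vin|_{c_B}$ meets the slice $T \subset \overline{T}$ in a single $T$-orbit, the analogous orbit-by-orbit picture holds for all of $\Vin_{G,B}$: each $G \times G$-orbit meets $\overline{T}$ in a unique $T \times T$-orbit, and these $T \times T$-orbits are naturally indexed by $\Lambdach_G^{pos}$. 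Passing to $\Maps_{gen}$ and imposing the generic non-degeneracy condition (which is preserved because a generic point of the curve automatically maps into the unique open orbit) then yields a locally closed immersion on each connected component $\Bun_{B^-, \lambdach_1} \underset{\Bun_T}{\times} (X^{\check\theta} \times \Bun_{B, \lambdach_2})$ of the source.

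Second, to establish the stratification of $k$-points, I would start with a $k$-point of $\VinBun_{G,B}$, i.e., a map $\varphi: X \to \Vin_{G,B}/(G \times G)$ which generically factors through the non-degenerate locus $(G/N \times G/N^-)/T/(G \times G) = \cdot/B^- \underset{\cdot/T}{\times} \cdot/B$. On the generic locus of $X$ this produces generic reductions to $B^-$ and $B$ together with a generic identification of the associated $T$-bundles. By standard properness arguments these generic reductions extend uniquely to $B^-$- and $B$-bundles $E_{B^-} \in \Bun_{B^-, \lambdach_1}$ and $E_B \in \Bun_{B, \lambdach_2}$ on all of $X$. The failure of the associated $T$-bundles to agree globally, controlled by the locus where $\varphi$ degenerates off the non-degenerate open, is measured canonically by a $\Lambdach_G^{pos}$-valued effective divisor $D \in X^{\check\theta}$ with $\check\theta = \lambdach_2 - \lambdach_1$, obtained from the $X \to \overline{T}/T$ factor of the induced lift to $\overline{T}/(B \times B^-)$. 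The triple $(E_{B^-}, D, E_B)$ furnishes the required preimage under $f_{\lambdach_1, \check\theta, \lambdach_2}$ and is uniquely determined by $\varphi$.

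The main obstacle is the local geometric assertion about $\overline{T} \hookrightarrow \Vin_{G,B}$ together with the compatibility of its $T \times T$-orbit structure with the non-degeneracy condition of $\Maps_{gen}$. In particular, one must verify in families that the defect divisor $D$ is well-defined and that no cancellation occurs between the strata at the generic point of $X$, so that the decomposition by coweights $\check\theta \in \Lambdach_G^{pos}$ matches exactly the $T \times T$-orbit stratification of $\overline{T}$; this combinatorial identification is ultimately governed by the toric geometry of $\overline{T}$ as an affine closure of $T$ inside $\overline{G/N}$.
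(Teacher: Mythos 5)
First, note that the paper does not prove this proposition in the text: it is quoted from \cite[Proposition 3.3.2]{Sch2}, so there is no in-paper argument to compare against. I am therefore judging your proposal against the standard proofs of statements of this type (the analogous stratification of $\barBun_B$ in \cite{BG1} and the argument given in \cite{Sch2}).

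Your second step --- extracting from a $k$-point of $\VinBun_{G,B}$ the generic $B^-$- and $B$-reductions, extending them to all of $X$ by properness of $G/B$, and measuring the discrepancy of the induced $T$-bundles by a $\Lambdach_G^{pos}$-valued effective divisor --- is the right mechanism for the set-theoretic decomposition and for bijectivity on $k$-points. The genuine gap is in your first step. To begin with, the assertion that the $T\times T$-orbits of $\overline{T}$ are ``naturally indexed by $\Lambdach_G^{pos}$'' is false: $\overline{T}$ is a toric variety on which $T\times T$ acts through the multiplication map, so its orbits are finite in number (indexed by the faces of the defining cone, equivalently by subsets of $\CI$; for $G=\SL_2$ there are exactly two), whereas $\Lambdach_G^{pos}$ is infinite. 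The indexing by $\Lambdach_G^{pos}$ appears only at the level of the mapping stack, where the $X^{\check\theta}$ are the connected components of $\Maps_{gen}(X,\overline{T}/T \supset pt)$. More seriously, even a corrected orbit-by-orbit statement about $\overline{T} \into \Vin_{G,B}$ does not formally yield that $f_{\lambdach_1,\check\theta,\lambdach_2}$ is a locally closed immersion of stacks: the map of targets $\overline{T}/(B\times B^-) \to \Vin_{G,B}/(G\times G)$ is not itself an immersion, and applying $\Maps_{gen}$ does not transport such properties. What is actually required --- and what you defer to the last paragraph as something ``one must verify in families'' --- is the substance of the proof: one must exhibit the locus of defect $\check\theta$ as a locally closed substack of $\VinBun_{G,B}$ (via the vanishing and saturation behavior of the Pl\"ucker-type maps of line bundles attached to dominant weights, which is a locally closed condition in flat families over a test scheme $S$) and construct a functorial inverse to $f_{\lambdach_1,\check\theta,\lambdach_2}$ over such $S$, not merely over $\Spec k$. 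Without this, the first assertion of the proposition is restated rather than proven.
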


\medskip

\sssec{Defect value and defect}
Each stratum
$$\sideset{_{\lambdach_1, \check\theta, \lambdach_2}}{_{G,B}}\VinBun \ \ = \ \ \Bun_{B^-, \lambdach_1} \ \underset{\Bun_T}{\times} \ \bigl( X^{\check\theta} \ \times \ \Bun_{B,\lambdach_2} \bigr)$$
of $\VinBun_{G,B}$ comes equipped with a forgetful map to the space $X^{\check\theta}$. Given a $k$-point of $\VinBun_{G,B}$ lying in this stratum, the corresponding $k$-point of $X^{\check\theta}$ will be referred to as its \textit{defect value}, and the positive coweight $\check\theta \in \Lambdach_G^{pos}$ as its \textit{defect}.

\bigskip

\ssec{Compactifying the strata maps}
\label{Compactifying the strata maps}

In this subsection we recall natural compactifications $\bar f_{\lambdach_1, \check\theta, \lambdach_2}$ of the strata maps $f_{\lambdach_1, \check\theta, \lambdach_2}$ constructed in \cite{Sch2}. To do so, we first briefly recall Drinfeld's relative compactification $\barBun_B$; we refer the reader to \cite{BG1}, \cite{BFGM} and \cite{Sch3} for proofs and further background on $\barBun_B$.

\medskip

\sssec{Drinfeld's relative compactification $\barBun_B$}
\label{Drinfeld's relative compactification}

The space $\barBun_B$ can be defined as the mapping stack
$$\barBun_B \ := \ \Maps_{gen}(X, \, G \backslash \overline{G/N} / T \, \supset \, \cdot / B) \, .$$
It naturally contains $\Bun_B$ as a dense open substack, and the schematic map $\Bun_B \to \Bun_G$ extends to a schematic map
$$\barBun_B \ \longto \ \Bun_G$$
which is proper when restricted to any connected component $\barBun_{B, \check\lambda}$ of $\barBun_B$, where $\check\lambda \in \pi_0(\barBun_B) = \Lambdach_G$.

\medskip

The space $\barBun_B$ admits the following stratification. The torus action
$$G/N \times T \ \longto \ G/N$$
extends to an action of the monoid $\overline{T}$
$$\overline{G/N} \times \overline{T} \ \longto \ \overline{G/N} \, ,$$
and the latter in turn induces natural maps
$$X^{\check\theta} \times \barBun_{B, \check\lambda + \check\theta} \ \ \longto \ \ \barBun_{B, \check\lambda}$$
for any $\check\lambda \in \Lambdach_G$ and $\check\theta \in \Lambdach_G^{pos}$.
It is then shown in \cite{BG1} that the restricted maps
$$X^{\check\theta} \times \Bun_{B, \check\lambda + \check\theta} \ \ \longto \ \ \barBun_{B, \check\lambda}$$
are locally closed immersions, and that they stratify $\barBun_{B, \check\lambda}$ as $\check\theta$ ranges over the set $\Lambdach_G^{pos}$:
$$\barBun_{B, \check\lambda} \ \ = \ \ \bigcup_{\check\theta \in \Lambdach_G^{pos}} \ X^{\check\theta} \times \Bun_{B, \check\lambda + \check\theta}$$

\medskip

\sssec{Compactifying the maps $\bar f_{\lambdach_1, \check\theta, \lambdach_2}$}

We now recall the construction of the compactified maps $\bar f_{\lambdach_1, \check\theta, \lambdach_2}$ from \cite{Sch2}.
By Subsection \ref{The embedding of barT into Vin_G} above, the $B$-locus of the Vinberg semigroup
$$\Vin_G|_{c_B} \ \ = \ \ \Vin_{G,B} \ \ = \ \ \overline{(G/N \times G/N^-)/T}$$
naturally contains the varieties $\overline{G/N}$, $\overline{T}$, and $\overline{G/N^-}$ as subvarieties. As the inverse image of $0 \in T_{adj}^+$ under a semigroup homomorphism, the $B$-locus furthermore carries a structure of semigroup (without unit), referred to as the \textit{asymptotic semigroup} in the literature. Using the multiplication operation of this semigroup we obtain a map
$$\overline{G/N} \times \overline{T} \times \overline{G/N^-} \ \ \longto \ \ \Vin_{G,B} = \overline{(G/N \times G/N^-)/T}$$
by multiplying the three subvarieties. One can alternatively also obtain this map by first acting by $\overline{T}$ on either $\overline{G/N}$ or $\overline{G/N^-}$, and then multiply in $\Vin_{G,B}$ with the remaining subvariety.

\medskip

The above map then induces the desired maps
$$\bar f_{\lambdach_1, \check\theta, \lambdach_2}: \ \barBun_{B^-, \lambdach_1} \ \underset{\Bun_T}{\times} \ \bigl( X^{\check\theta} \ \times \ \barBun_{B,\lambdach_2} \bigr) \ \ \longto \ \VinBun_{G,B}$$
which extend the strata maps $f_{\lambdach_1, \check\theta, \lambdach_2}$ from Subsection \ref{Strata maps} above and which are finite: Indeed, it follows from the properness of $\barBun_B$ and $\barBun_{B^-}$ that they are proper, and the quasi-finiteness of the addition map of effective divisors implies that they are also quasi-finite.

\bigskip
\bigskip
\bigskip
\bigskip
\bigskip

\section{Statements -- Main theorem about nearby cycles}
\label{Statements}

\bigskip

\ssec{Recollections about nearby cycles}
\label{Recollections about nearby cycles}

\sssec{Notation}

For any scheme or stack $Y$ equipped with a map $Y \to \BA^1$ we denote by
$$\Psi: \ \D(Y|_{\BA^1 \setminus \{0\}}) \ \longto \ \D(Y|_{\{0\}})$$
the unipotent nearby cycles functor in the perverse and Verdier-self dual renormalization; it differing from the usual unipotent nearby cycles functor by the shift and twist $[-1](-\tfrac{1}{2})$. With this convention the functor $\Psi$ is t-exact for the perverse t-structure and commutes with Verdier duality literally and not just up to twist. We simply refer to $\Psi$ as \textit{the nearby cycles}.
We denote the logarithm of the unipotent part of the monodromy operator by
$$N: \ \Psi \ \longto \ \Psi(-1) \, ,$$
and simply refer to it as \textit{the monodromy operator}.
We refer the reader to \cite{B} and \cite[Sec. 5]{BB} for additional background on unipotent nearby cycles.

\bigskip

\sssec{Monodromy and weight filtrations}
We now recall some definitions and facts about the monodromy and weight filtrations on nearby cycles, referring the reader to \cite[Sec. 1.6]{De} and \cite[Sec. 5]{BB} for proofs.

\medskip

Given any perverse sheaf $F$ on $Y|_{\BA^1 \setminus \{0\}}$, the operator $N$ by construction acts nilpotently on the perverse sheaf $\Psi(F)$. It therefore induces the \textit{monodromy filtration} on $\Psi(F)$, i.e., the unique finite filtration
$$\Psi(F) = M_{n} \ \supseteq \ M_{n-1} \ \supseteq \ \cdots \ \supseteq \ M_{-n} \ \supseteq 0$$
by perverse sheaves $M_i$ satisfying that
$$N(M_i) \ \subset \ M_{i-2}(-1)$$
for all $i$, and that the induced maps
$$N^i: \ M_i/M_{i-1} \ \longto \ \bigl(M_{-i}/M_{-i-1}\bigr)(-i)$$
are isomorphisms for all $i \geq 0$.
The operator $N$ thus also acts on the associated graded perverse sheaf $\gr(\Psi(F))$, and we have the following well-known linear-algebraic lemma (the Jacobson-Morozov theorem):

\medskip

\begin{lemma}
\label{Lefschetz-sl_2}
The action of the monodromy operator $N$ on the associated graded $\gr(\Psi(F))$ canonically extends to an action of the ``Lefschetz-$\sl_2$'' on $\gr(\Psi(F))$, i.e.: There exists a unique action of the Lie algebra $\sl_2(\overline{\BQ}_\ell)$ on $\gr(\Psi(F))$ such that the action of the lowering operator of $\sl_2(\overline{\BQ}_\ell)$ agrees with the action of $N$, and such that the Cartan subalgebra of $\sl_2(\overline{\BQ}_\ell)$ acts on the summand $\gr(\Psi(F))_i = M_i/M_{i-1}$ with Cartan weight $i$. Thus the decomposition
$$\gr(\Psi(F)) \ = \ \bigoplus_{i} M_i/M_{i-1}$$
agrees with the decomposition of the $\sl_2(\overline{\BQ}_\ell)$-representation $\gr(\Psi(F))$ according to Cartan weights. We will refer to the Lie algebra $\sl_2(\overline{\BQ}_\ell)$ in this context as \textit{the Lefschetz}-$\sl_2$.
\end{lemma}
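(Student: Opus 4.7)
The plan is to recognize this as a categorical incarnation of the classical Jacobson--Morozov / Lefschetz $\sl_2$ theorem, applied to the nilpotent endomorphism $N$ on $\gr(\Psi(F))$. The only datum beyond the abelian-category structure is the list of defining properties of the monodromy filtration, and I would show that these already determine an $\sl_2$-action uniquely. The argument works in any $\Qellbar$-linear abelian category (such as perverse sheaves) in which $N$ is nilpotent and the iterated isomorphisms $N^i \colon M_i/M_{i-1} \isoto (M_{-i}/M_{-i-1})(-i)$ hold.

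First I would set up the Cartan operator: define $H$ on $\gr(\Psi(F))$ to act by the scalar $i$ on the summand $\gr(\Psi(F))_i = M_i/M_{i-1}$. Since $N$ lowers the filtration index by $2$ and twists by $(-1)$, the relation $[H,N]=-2N$ is automatic. What remains is to construct a raising operator $E\colon \gr(\Psi(F))_i \to \gr(\Psi(F))_{i+2}(1)$ satisfying $[H,E]=2E$ (again automatic from the degree shift) and the crucial bracket $[E,N]=H$. The construction proceeds via the primitive decomposition: for each $k\geq 0$ let
$$P_k \ := \ \ker\Bigl(N \colon \gr(\Psi(F))_k \to \gr(\Psi(F))_{k-2}(-1)\Bigr).$$
The Lefschetz property $N^i \colon \gr(\Psi(F))_i \isoto \gr(\Psi(F))_{-i}(-i)$ implies, by a standard diagram chase with the filtration, that $N$ acts injectively on $\gr(\Psi(F))_i$ for $i<0$ and surjectively for $i>0$, and that one has a canonical direct sum decomposition
$$\gr(\Psi(F))_i \ \ = \ \ \bigoplus_{j \, \geq \, \max(0,-i)} N^j \, P_{i+2j}(-j).$$
In other words, the associated graded decomposes into a direct sum of ``Lefschetz strings'' of the form $P_k(\cdot) \oplus NP_k(\cdot) \oplus \cdots \oplus N^k P_k(\cdot)$, each isomorphic as a graded object to the $(k+1)$-dimensional irreducible $\sl_2$-representation.

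Once this decomposition is in place, I would define $E$ on each string by the unique rule dictated by $\sl_2$-representation theory: on the summand $N^j P_k$, set $E$ to be $j(k-j+1)$ times the canonical identification with $N^{j-1}P_k$ induced by the fact that $N^{j-1}$ restricted to $P_k$ is a direct summand projection followed by an isomorphism onto $N^{j-1}P_k$. The verification of $[E,N]=H$ is then a summand-by-summand computation identical to the verification for the standard irreducible representations $V_k$ of $\sl_2$. Uniqueness of the $\sl_2$-action is forced by the requirement that $H$ act with weight $i$ on $\gr(\Psi(F))_i$ and that $N$ act as the lowering operator: any such action must preserve each weight space, and $E$ is then determined by $[E,N]=H$ together with $E|_{P_k} = 0$ for $k\leq 0$ after applying $N$ the correct number of times, which in turn is forced by the weight constraint (the lowest weight vectors of an $\sl_2$-representation are annihilated by the raising operator in no nontrivial way consistent with the weights present).

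The main (minor) obstacle is to make sure the Lefschetz decomposition above is constructed canonically in the abelian category of perverse sheaves, rather than just as an abstract isomorphism; this is handled by noting that $P_k$ is defined as an honest kernel, that the maps $N^j\colon P_k \to \gr(\Psi(F))_{k-2j}(-j)$ are split injections (their composition with the appropriate power of $N$ gives back, up to isomorphism coming from $N^k\colon P_k \isoto N^k P_k(-k)$, an isomorphism of $P_k$ with its image), and that the resulting direct sum decomposition is verified by descending induction on $|i|$ using the bijectivity of $N^i$ on the degree $i$ piece. No nontrivial sheaf-theoretic input is required beyond this, and the whole lemma reduces to the representation theory of $\sl_2$ applied termwise.
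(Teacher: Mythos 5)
The paper does not actually prove this lemma; it records it as the well-known Jacobson--Morozov / hard Lefschetz statement and points to \cite{De} and \cite{BB}. Your strategy --- read off $H$ from the grading, decompose $\gr(\Psi(F))$ into Lefschetz strings via a primitive decomposition, define the raising operator $E$ string-by-string by the formula $Ev_j=j(k-j+1)v_{j-1}$, and deduce uniqueness from the weight constraints --- is exactly the standard proof of that fact. However, as written your primitive decomposition is false, and the error is fatal rather than cosmetic. Since $N$ is the \emph{lowering} operator (it maps $\gr(\Psi(F))_i$ to $\gr(\Psi(F))_{i-2}(-1)$), the defining isomorphisms $N^i\colon \gr(\Psi(F))_i \isoto \gr(\Psi(F))_{-i}(-i)$ force $N$ to be injective on $\gr(\Psi(F))_i$ for $i>0$ and surjective onto $\gr(\Psi(F))_{i-2}$ for $i\leq 1$ --- the opposite of what you assert. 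Consequently your $P_k=\ker(N)\cap\gr(\Psi(F))_k$ is the space of \emph{lowest}-weight vectors sitting in non-negative weight, so $P_k=0$ for every $k>0$, and moreover $N^jP_k=0$ for every $j\geq 1$ because $P_k\subset\ker(N)$. The claimed decomposition $\gr(\Psi(F))_i=\bigoplus_j N^jP_{i+2j}(-j)$ therefore degenerates to the false assertion that $\gr(\Psi(F))_i=P_i$ for $i\geq 0$ and $\gr(\Psi(F))_i=0$ for $i<0$, and there is nothing left on which to define $E$.

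The repair is standard: the primitive part in weight $k\geq 0$ must consist of the \emph{highest}-weight vectors, i.e.
$$P_k \ := \ \ker\Bigl(N^{k+1}\colon \ \gr(\Psi(F))_k \ \longto \ \gr(\Psi(F))_{-k-2}(-k-1)\Bigr),$$
not $\ker(N)$. With this definition the decomposition $\gr(\Psi(F))_i=\bigoplus_{j\geq\max(0,-i)}N^jP_{i+2j}(-j)$ does hold (by descending induction on $k$, using that $N^k\colon\gr(\Psi(F))_k\isoto\gr(\Psi(F))_{-k}(-k)$ splits off $P_k$ canonically inside the abelian category of perverse sheaves), the strings $P_k,\,NP_k,\,\dots,\,N^kP_k$ are genuine copies of the $(k+1)$-dimensional irreducible representation, and your formula for $E$ together with your uniqueness argument (given $H$ and the lowering operator $N$, a raising operator is unique) then goes through.
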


\medskip

If the perverse sheaf $F$ is pure, Gabber has shown:

\begin{proposition}[Gabber]
\label{Gabber's theorem}
Let $F$ be a pure perverse sheaf of weight~$0$. Then the subquotients of the monodromy filtration on $\Psi(F)$ are also pure, and the weight of the subquotient $\gr(\Psi(F))_i  = M_i/M_{i-1}$ is equal to~$i$. I.e., the monodromy filtration and the weight filtration $\Psi(F)$ agree, and the weight of each subquotient as a Weil sheaf agrees with its Cartan weight with respect to the action of the Lefschetz-$\sl_2$.
\end{proposition}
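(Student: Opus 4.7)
The statement is a classical theorem of Gabber, and my plan is to treat it essentially as a black box coming from the theory of weights on nearby cycles, but to outline the two inputs that make it work.

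The plan is to verify that the weight filtration $W_\bullet$ on $\Psi(F)$ satisfies the two defining properties of the monodromy filtration, after which uniqueness of the monodromy filtration (the linear-algebraic statement underlying Lemma \ref{Lefschetz-sl_2}) forces $W_\bullet$ and $M_\bullet$ to coincide. Property one, namely $N(W_i) \subseteq W_{i-2}(-1)$, is automatic: the renormalized monodromy operator $N : \Psi \to \Psi(-1)$ is a morphism of Weil sheaves, and the Tate twist $(-1)$ raises weights by $2$, so $N$ shifts the weight filtration by $-2$. All the content is therefore in property two: the induced maps
\[
N^i: \ \gr^W_i \, \Psi(F) \ \longto \ \bigl( \gr^W_{-i} \, \Psi(F) \bigr)(-i)
\]
are isomorphisms for every $i \geq 0$. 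This is a ``hard Lefschetz'' type statement for the nilpotent operator $N$.

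First I would reduce to a local assertion: the nearby cycles functor, the operator $N$, and the weight filtration are all compatible with smooth base change and are étale-local on the target $\BA^1$, so one may henselize at $0$ and work on a strict henselian trait. Over such a trait, the question becomes a statement about a mixed Weil sheaf on the special fiber equipped with the action of the tame inertia generator whose logarithm is $N$, and the geometric Frobenius. The key input is then Gabber's purity theorem for nearby cycles (see \cite{BB}, Sec.~5, for an exposition in our renormalization): for $F$ pure of weight $0$, the renormalized $\Psi(F)$ is mixed with weights in the symmetric range $[-n,n]$, and Verdier self-duality of $\Psi$ (which, in our normalization, holds literally) interchanges $W_i$ with the perpendicular of $W_{-i-1}$ under the self-duality pairing of $\Psi(F)$.

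The remaining hard Lefschetz isomorphism is proved by reduction via alterations (de~Jong) to the case of a strictly semistable family; there the relative monodromy filtration is computed by the weight spectral sequence of the log-smooth compactification, and $N^i$ is identified with iterated cup product against the class of the boundary divisor. One then invokes the classical hard Lefschetz theorem for pure perverse sheaves (Beilinson-Bernstein-Deligne-Gabber) to conclude that $N^i$ induces the desired isomorphism on each $\gr^W$. The hardest step, and the one I would spend essentially no words on in the present paper, is precisely this reduction: it is the content of Gabber's original proof and is treated in detail in \cite{B} and \cite{BB}. In the body of this text the sensible course of action is to record the statement and cite Gabber, which is what the paper does.
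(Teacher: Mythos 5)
The paper offers no proof of this proposition: it is recorded as a known theorem of Gabber, with the reader referred to \cite[Sec. 1.6]{De} and \cite[Sec. 5]{BB}, which is precisely the course of action you settle on. Your supplementary outline --- reducing the assertion to the hard-Lefschetz property of $N^i$ on the weight graded pieces, the first axiom of the monodromy filtration being automatic for any morphism $\Psi \to \Psi(-1)$ of mixed sheaves, and the hard step deferred to Gabber via the cited sources --- is a correct skeleton of the standard argument, so there is nothing to compare against or to object to.
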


\bigskip

\ssec{Picard-Lefschetz oscillators for arbitrary reductive groups}
\label{PLO}

We now recall the definition of the \textit{Picard-Lefschetz oscillators} from \cite{Sch1} in the case $G=\SL_2$, and then give a definition of Picard-Lefschetz oscillators for an arbitrary reductive group $G$; these sheaves will in fact depend on the Langlands dual group $\check G$ of $G$. We first review:

\medskip

\sssec{External exterior powers}
\label{sssec External exterior powers}

Let $E$ be a local system on the curve $X$, placed in cohomological degree $0$. The $n$-\textit{th} \textit{external exterior power} $\Lambda^{(n)}(E)$ of $E$ on the symmetric power of the curve $X^{(n)}$ is defined as follows: Note first that the $n$-fold external product $E \boxtimes \cdots \boxtimes E$ on the $n$-th power $X^n$ comes equipped with a natural equivariant structure for the action of the symmetric group $S^n$ on $X^n$; its pushforward $p_* (E \boxtimes \cdots \boxtimes E)$ along the natural map
$$p: \ X^n \ \longto \ X^{(n)}$$
is therefore equipped with an $S^n$-action. One then obtains the $n$-th external exterior power $\Lambda^{(n)}(E)$ by taking the $S^n$-invariants of the pushforward $p_* (E \boxtimes \cdots \boxtimes E)$ against the sign character of $S^n$.
The external exterior power construction is functorial and satisfies (see for example \cite[Sec. 5]{G1}):

\medskip

\begin{lemma}
\label{external exterior powers}
\begin{itemize}
\item[]
\item[(a)] The restriction of the $n$-th external exterior power $\Lambda^{(n)}(E)$ to the disjoint locus $\overset{\circ}{X} {}^{(n)}$ is again a local system.
\item[(b)] The shifted object $\Lambda^{(n)}(E)[n]$ is a perverse sheaf; it is equal to the intermediate extension of its restriction to the disjoint locus.
\item[(c)] The collection of perverse sheaves $\Lambda^{(n)}(E)[n]$ is factorizable, in the sense of Subsection \ref{Factorization structures for perverse sheaves} above.
\end{itemize}
\end{lemma}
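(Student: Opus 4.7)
For part (a), the first step is to observe that the covering map $p: X^n \to X^{(n)}$ restricts over $\overset{\circ}{X}{}^{(n)}$ to a finite étale Galois cover with group $S^n$ acting freely. Consequently $p_*(E^{\boxtimes n})|_{\overset{\circ}{X}{}^{(n)}}$ is a local system equipped with a fiberwise $S^n$-action, and its sign-isotypic direct summand $\Lambda^{(n)}(E)|_{\overset{\circ}{X}{}^{(n)}}$ is again a local system.

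For part (b), the plan is to analyze the full pushforward $p_*(E^{\boxtimes n}[n])$ on $X^{(n)}$. Since $p$ is a finite surjective morphism between smooth irreducible varieties of the same dimension, all of its fibers are zero-dimensional, so $p$ is automatically \emph{small} with respect to the natural stratification of $X^{(n)}$ by partitions: each non-generic stratum has strictly positive codimension and fiber dimension $0$ over it. Because $E^{\boxtimes n}[n]$ is a shifted local system on the smooth variety $X^n$, the decomposition theorem combined with the smallness of $p$ forces $p_*(E^{\boxtimes n}[n])$ to be the intermediate extension of its restriction to $\overset{\circ}{X}{}^{(n)}$. The $S^n$-action on this pushforward commutes with the perverse $t$-structure, so one can then pass to the sign-isotypic direct summand to deduce that $\Lambda^{(n)}(E)[n]$ is perverse and equals the IC-extension of its restriction to the disjoint locus.

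For part (c), the strategy is smooth base change. Consider the commutative diagram whose top row is the tautological identification $X^{n_1} \times X^{n_2} = X^{n_1+n_2}$, whose vertical arrows are the symmetrization maps (all denoted $p$), and whose bottom row is $\add$. Over $X^{(n_1)} \stackrel{\circ}{\times} X^{(n_2)}$ the map $\add$ is étale and the square is cartesian, so base change produces a canonical isomorphism
\[
\add^*\bigl(p_*(E^{\boxtimes n})\bigr)\big|_{\text{disj}} \ \cong \ \bigl(p_*(E^{\boxtimes n_1}) \boxtimes p_*(E^{\boxtimes n_2})\bigr)\big|_{\text{disj}} .
\]
Passing to sign-isotypic summands and using that the restriction $\sgn|_{S^{n_1} \times S^{n_2}} = \sgn \boxtimes \sgn$ yields the desired factorization isomorphism for $\Lambda^{(n)}(E)[n]$. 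The compatibility of these isomorphisms under iterated factorization is then automatic from the associativity of divisor addition and of tensor products of sign characters.

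The main obstacle to monitor is the smallness input in (b): one must genuinely use smallness, not merely semismallness, so that the decomposition theorem produces only summands whose support equals all of $X^{(n)}$ and no simple summand is supported on a diagonal. Once part (b) is in place, parts (a) and (c) become essentially formal, reducing respectively to the étaleness of $p$ over $\overset{\circ}{X}{}^{(n)}$ and of $\add$ over the disjoint locus in the product.
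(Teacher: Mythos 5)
The paper does not actually prove this lemma; it is quoted with a reference to \cite[Sec. 5]{G1}, so there is no internal argument to compare against. Your parts (a) and (b) are correct and are the standard argument: $p$ restricted over $\overset{\circ}{X}{}^{(n)}$ is a free $S^n$-Galois cover, and $p$ is finite hence small (fibers of dimension $0$ over strata of positive codimension), so $p_*(E^{\boxtimes n}[n])$ is the intermediate extension of its restriction to the disjoint locus; passing to the sign-isotypic summand preserves both perversity and the property of having no sub- or quotient objects supported on the diagonals, hence the summand is again an intermediate extension. (As a side remark, you do not need the decomposition theorem here: smallness alone gives the support and cosupport estimates characterizing $j_{!*}$.)

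Part (c) contains a genuine misstep: the square you describe, with top row the tautological isomorphism $X^{n_1}\times X^{n_2}\cong X^n$, is \emph{not} cartesian over $X^{(n_1)}\stackrel{\circ}{\times}X^{(n_2)}$. The actual fiber product $\bigl(X^{(n_1)}\stackrel{\circ}{\times}X^{(n_2)}\bigr)\times_{X^{(n)}}X^n$ consists of an ordered $n$-tuple together with a decomposition $D=D_1+D_2$ into disjointly supported divisors; since the supports are disjoint, each coordinate is forced into exactly one of the two groups, so this fiber product is a disjoint union of $\binom{n}{n_1}$ copies of (the relevant open part of) $X^{n_1}\times X^{n_2}$, indexed by the subset $S\subset\{1,\dots,n\}$ of coordinates landing in $D_1$. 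Base change therefore identifies $\add^*\bigl(p_*(E^{\boxtimes n})\bigr)\big|_{\mathrm{disj}}$ with the pushforward from this disjoint union, on which $S^n$ permutes the components transitively with stabilizer $S^{n_1}\times S^{n_2}$; the pushforward is thus the representation induced from $S^{n_1}\times S^{n_2}$, and Frobenius reciprocity for the sign character (together with $\sgn|_{S^{n_1}\times S^{n_2}}=\sgn\boxtimes\sgn$, which you do invoke) identifies its sign-isotypic part with $\Lambda^{(n_1)}(E)\stackrel{\circ}{\boxtimes}\Lambda^{(n_2)}(E)$. With the fiber product corrected and this induction step inserted, your argument for (c) goes through; as written, the base-change step is applied to a non-cartesian square and would fail.
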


\bigskip

\sssec{Picard-Lefschetz oscillators for $G=\SL_2$}
\label{PLOs for SL_2}

Following \cite{Sch1}, we denote by
$$V \ = \ \Qellbar(\tfrac{1}{2}) \oplus \Qellbar(-\tfrac{1}{2})$$
the $2$-dimensional standard representation of the Lefschetz-$\sl_2$, and let
$$\underline V \ := \ V \otimes \Qellbar_X$$
denote the corresponding constant local system of rank $2$ on the curve $X$ together with the induced action of the Lefschetz-$\sl_2$. The \textit{Picard-Lefschetz oscillator} $\CP_n$ on $X^{(n)}$ is then defined as the $n$-th external exterior power of $\underline V \,$, shifted and twisted in the following way:
$$\CP_n \ := \ \Lambda^{(n)} (\underline V) \, [n](\tfrac{n}{2})$$

\medskip

Lemma \ref{external exterior powers} above shows that $\CP_n$ is a perverse sheaf on $X^{(n)}$, equipped with an action of the Lefschetz-$\sl_2$.
Lemma \ref{external exterior powers} also shows that the Picard-Lefschetz oscillators can be obtained as IC-extensions in the following manner: Let the symmetric group $S^n$ act on the $n$-fold tensor power $V \otimes \cdots \otimes V$ by both permuting the factors and also multiplying by the sign of the permutation, and consider the local system on the disjoint locus $\overset{\circ}{X} {}^{(n)}$ corresponding to this action. Then the IC-extension of this local system is equal to $\CP_n$. In particular the perverse sheaf $\CP_n$ is semisimple. Furthermore, the factorization structure on the collection of Picard-Lefschetz oscillators $\CP_n$ respects the action of the Lefschetz-$\sl_2$.

\bigskip

Below we define the correct generalizations of the Picard-Lefschetz oscillators for arbitrary reductive groups. These are certain perverse sheaves $\CF_{\thetacheck}$ on the spaces $X^{\thetacheck}$ which are built from the sheaves $\CP_n$ above in a combinatorial fashion depending on the Langlands dual group $\check G$ of $G$. We first need to recall:

\medskip

\sssec{Kostant partitions}

For any positive coweight $\thetacheck \in \Lambdach_G^{pos}$ we define a \textit{Kostant partition} of $\thetacheck$ to be a collection of non-negative integers $(n_{\betacheck})_{\betacheck \in \check R^+}$ indexed by the set of positive coroots $\check R^+$ of $G$, satisfying that
$$\thetacheck \ = \ \sum_{\betacheck \in \check R^+} n_{\betacheck} \betacheck \, .$$
Put differently, a Kostant partition of $\thetacheck$ is a partition $\thetacheck = \sum_k \thetacheck_k$ of $\thetacheck$ where each summand $\thetacheck_k$ is required to be a positive coroot of $G$.
We will simply refer to the expression $\thetacheck = \sum_{\betacheck \in \check R^+} n_{\betacheck} \betacheck$ as a Kostant partition of $\thetacheck$.
We will denote the finite set of all Kostant partitions of $\thetacheck$ by $\Kost(\thetacheck)$. Note that the cardinality of the set $\Kost(\thetacheck)$ is by definition the value of the Kostant partition function of the Langlands dual group $\check G$ evaluated at the weight $\thetacheck \in \Lambdach_G = \Lambda_{\check G}$ of $\check G$.

\medskip

\sssec{Picard-Lefschetz oscillators for arbitrary reductive groups}
To any Kostant partition
$$\CK: \ \ \thetacheck \ = \ \sum_{\betacheck \in \check R^+} n_{\betacheck} \betacheck$$
of a positive coweight $\thetacheck \in \Lambdach_G^{pos}$ we associate the partially symmetrized power
$$X^{\CK} \ \ := \ \ \prod_{\betacheck \in \check R^+} X^{(n_{\betacheck})}$$
of the curve $X$.
We denote by
$$i_{\CK}: \ \ X^{\CK} \ \longto \ X^{\thetacheck}$$
the finite map defined by adding $\Lambdach_G^{pos}$-valued divisors.
We furthermore define the perverse sheaf
$$\CP_{\CK} \ \ := \ \ \underset{\betacheck \in \check R^+}{\boxtimes} \, \CP_{n_{\betacheck}}$$
on the partially symmetrized power $X^{\CK}$, where $\CP_{n_{\betacheck}}$ denotes the Picard-Lefschetz oscillator on the symmetric power~$X^{(n_{\betacheck})}$.

\medskip

Finally, for any positive coweight $\check\theta \in \Lambdach_G^{pos}$ we define the \textit{Picard-Lefschetz oscillator} $\CF_{\thetacheck}$ on $X^{\thetacheck}$ as the direct sum
$$\CF_{\thetacheck} \ \ := \ \ \bigoplus_{\CK \, \in \, \Kost(\check\theta)} i_{\CK, *} \, \CP_{\CK} \, .$$
By construction the perverse sheaf $\CF_{\thetacheck}$ comes equipped with an action of the Lefschetz-$\sl_2$.

\bigskip

\ssec{The main theorem}

\sssec{The principal degeneration $\VinBun_G^{princ}$}

We will now focus our attention on the restriction of the multi-parameter family $\VinBun_G \to T_{adj}^+ = \BA^r$ to a \textit{general} line passing through the origin $c_B = 0$, i.e., to a line through the origin which is not contained in any of the coordinate planes of $\BA^r$. Since the $T_{adj}$-action on $T_{adj}^+$ lifts to a $T_{adj}$-action on $\VinBun_G$, one obtains isomorphic families regardless of the choice of the general line. For concreteness, we thus restrict $\VinBun_G$ to the line $L_B$ passing through the origin $c_B = 0$ and the point $c_G = 1 \in T_{adj} \subset T_{adj}^+$. We denote the resulting one-parameter family by
$$\VinBun_G^{princ} \ \longto \ L_B = \BA^1$$
and refer to it as the \textit{principal degeneration} of $\Bun_G$. Here we identify the point $0 \in \BA^1$ with $0 \in T_{adj}^+$ and the point $1 \in \BA^1$ with the point $1 \in T_{adj} \subset T_{adj}^+$. 

\medskip

The principal degeneration $\VinBun_G^{princ}$ consists of the $G$-locus
$$\VinBun_{G,G}^{princ} \ \ \ = \ \ \ \Bun_G \ \times \ \ \bigl( \BA^1 \setminus \{0\} \bigr)$$
and the $B$-locus
$$\VinBun_{G,B}^{princ} \ \ = \ \ \VinBun_{G,B} \ \ = \ \ \VinBun_G|_{c_B} \, .$$ 
While we will focus on the principal degeneration of $\VinBun_G$ for the present article, we remark that restrictions of $\VinBun_G$ to other lines in $T_{adj}^+ = \BA^r$ involving the parabolic strata can be dealt with in a similar fashion.

\bigskip

\sssec{The nearby cycles theorem}

To state our main theorem about the nearby cycles of the principal degeneration $\VinBun_G^{princ}$, we denote by
$\IC_{\VinBun_{G,G}^{princ}}$ the IC-sheaf of the $G$-locus of $\VinBun_G^{princ}$; it is a constant sheaf shifted and twisted according to our conventions in Subsection \ref{Conventions and notation} above. Furthermore, we denote by
$$\IC_{\barBun_{B^-, \lambdach_1}} \underset{ \ \Bun_T}{\boxtimes} \Bigl( \CF_{\check\theta} \ \boxtimes \ \IC_{\barBun_{B, \lambdach_2}} \Bigr)$$
the $*$-restriction of the external product
$$\IC_{\barBun_{B^-, \lambdach_1}} \boxtimes \ \Bigl( \CF_{\check\theta} \, \boxtimes \ \IC_{\barBun_{B, \lambdach_2}} \Bigr)$$
from the product space to the fiber product
$$\barBun_{B^-, \lambdach_1} \ \underset{\Bun_T}{\times} \ \bigl( \, X^{\check\theta} \, \times \barBun_{B^-, \lambdach_2} \bigr) \, ,$$
shifted by $[- \dim \Bun_T]$ and twisted by $(- \tfrac{\dim \Bun_T}{2})$. We then have:

\bigskip

\begin{theorem}
\label{main theorem for nearby cycles}
There exists an isomorphism of perverse sheaves
$$\gr \, \Psi (\IC_{\VinBun_{G,G}^{princ}})     \ \ \cong \ \bigoplus_{(\lambdach_1, \thetacheck, \lambdach_2)} \bar{f}_{\lambdach_1, \thetacheck, \lambdach_2, *} \ \Bigl( \IC_{\barBun_{B^-}, \lambdach_1} \underset{ \, \Bun_T}{\boxtimes} \Bigl( \CF_{\thetacheck} \ \boxtimes \ \IC_{\barBun_{B, \lambdach_2}} \Bigr) \Bigr)$$
which identifies the action of the Lefschetz-$\sl_2$ on the right hand side with the monodromy action on the left hand side. Here the direct sum runs over all triples $(\lambdach_1, \thetacheck, \lambdach_2)$ with $\lambdach_1, \lambdach_2 \in \Lambdach_G$, $\thetacheck \in \Lambdach_G^{pos}$, and $\lambdach_1 + \check\theta = \lambdach_2$.
\end{theorem}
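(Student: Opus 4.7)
Because $\IC_{\VinBun_{G,G}^{princ}}$ is pure of weight zero and the functor $\Psi$ in our normalization is perverse $t$-exact and commutes with Verdier duality, Gabber's Proposition \ref{Gabber's theorem} guarantees that the weight filtration on $\Psi(\IC_{\VinBun_{G,G}^{princ}})$ coincides with the monodromy filtration and that each associated graded piece is pure of the prescribed weight. It therefore suffices to identify $\gr\Psi$ as a pure semisimple perverse sheaf equipped with a decomposition into Cartan weights; the identification of the Lefschetz-$\sl_2$ action will then follow from Lemma \ref{Lefschetz-sl_2}, because the right-hand side of the theorem is likewise pure, with weights matching the Cartan weights of its natural Lefschetz-$\sl_2$ action.

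The plan is to then reduce to the local models for $\VinBun_G^{princ}$ to be constructed in Section \ref{Proofs I}. These relate to $\VinBun_G^{princ}$ in the same way that the Zastava spaces relate to Drinfeld's compactification $\barBun_B$, and in particular describe smooth-local neighborhoods of the points of the $B$-locus $\VinBun_{G,B}^{princ}$. Consequently the computation of nearby cycles on $\VinBun_G^{princ}$ reduces to an analogous computation on the local models, which are fibered over $X^{\thetacheck} \times \BA^1$ and carry a Beilinson-Drinfeld factorization structure along the $X^{\thetacheck}$-direction. Restricting to the open locus of $X^{\thetacheck}$ where the divisor has pairwise distinct support, factorization expresses the nearby cycles there as an external product of contributions from individual points of the curve, each labeled by a single positive coroot.

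At a single point with a simple positive coroot $\alphacheck$, one reduces via the corresponding semisimple rank one subgroup to the $G = \SL_2$ computation of \cite{Sch1} and recovers the Picard-Lefschetz oscillator $\CP_{n_{\alphacheck}}$ from the classical Picard-Lefschetz formula applied to the ordinary double point singularity present in the local model. For a non-simple positive coroot $\betacheck$ one cannot reduce to a Levi subgroup, but one extracts an analogous factor $\CP_{n_{\betacheck}}$ by analyzing the stratum closure of $\Vin_{G,B}$ corresponding to $\betacheck$ together with the geometry of the local model. Combining these factors produces, over the disjoint locus, the external product $\CP_{\CK} = \boxtimes_{\betacheck}\, \CP_{n_{\betacheck}}$ on $X^{\CK}$ for each Kostant partition $\CK$ of $\thetacheck$, which is exactly the restriction of $\CF_{\thetacheck}$ to the disjoint locus.

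The main obstacle, and the essential departure from the argument of \cite{Sch1}, is to show that the summands indexed by non-simple Kostant partitions really appear as the pushforwards $i_{\CK, *}\CP_{\CK}$, rather than being absorbed into the intermediate extension of their restriction to the disjoint locus. My plan for this is to exploit the purity and semisimplicity granted by Gabber's theorem to localize the reconstruction of the diagonal summands to stalk computations along the deepest diagonals of $X^{\thetacheck}$, and to identify the resulting stalks by matching them against the Langlands-dual description of $\IC_{\barBun_B}$ and $\IC_{\barBun_{B^-}}$ supplied by \cite{FFKM} and \cite{BFGM}. The fact that the Kostant partition combinatorics governs simultaneously the stratification of the local models via the compactified and finite strata maps $\bar f_{\lambdach_1, \thetacheck, \lambdach_2}$ and the Langlands-dual side should make this matching possible and yield the claimed decomposition.
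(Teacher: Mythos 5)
Your outline follows the same route as the paper: reduce to the local models via the interplay principle, use factorization over the disjoint locus together with Gabber's theorem and semisimplicity, and then ``reconstruct'' the diagonal summands $i_{\CK,*}\CP_{\CK}$ for non-simple Kostant partitions by a stalk computation. However, the step you correctly identify as the main obstacle is left as a plan rather than an argument, and the specific inputs that make it work are missing. The reconstruction on the main diagonal $\Delta_X \subset X^{\thetacheck}$ requires two concrete identities: first, that the $!$-restriction of $\Psi(\IC_{Y^{\check\theta,princ}_G})$ to the maximal-defect section $X^{\check\theta} \into Y^{\check\theta,princ}_B$ equals $\widetilde\Omega^{\check\theta} = \pi_{Z,!}\bigl(\IC_{{}_0Z^{\check\theta}}\bigr)$ (Proposition \ref{stalks}, imported from \cite{Sch2}); and second, the Grothendieck-group expansion $\widetilde\Omega^{\check\theta} = \sum \add_*(\Omega^{\check\theta_1}\boxtimes U^{\check\theta_2})$ together with the Kostant-partition formulas for $\Omega^{\check\theta}$ and $U^{\check\theta}$ (Lemmas \ref{Omega and U} and \ref{tilde Omega lemma}). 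Matching ``against the Langlands-dual description of $\IC_{\barBun_B}$'' only supplies the $U^{\check\theta}$-term (the IC-stalks of the Zastava spaces); without a computation of the stalk of the nearby cycles themselves on the left-hand side, the bookkeeping that forces $\bigl(H_{\check\theta}\bigr)_{\text{on}\,\Delta_X} = \CP_1$ when $\check\theta$ is a coroot, and $0$ otherwise, cannot be carried out.

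Two further points. Your third paragraph is misleading: over the disjoint locus of $X^{\thetacheck}$ only the Kostant partition into simple coroots is visible, so there is no factor $\CP_{n_{\betacheck}}$ for a non-simple coroot $\betacheck$ to ``extract'' there, and there is no stratum of $\Vin_{G,B}$ attached to an individual positive coroot from which it could be read off; those summands live entirely on diagonals and emerge only from the Grothendieck-group cancellation just described. Finally, the reduction to the deepest diagonal presupposes knowledge of $\gr\Psi$ on all shallower strata, so you must set up an explicit induction on the length $|\check\theta|$ (with the factorization isomorphism of Proposition \ref{factorization of gr Psi} handling the off-diagonal part of the maximal-defect locus, and the intermediate-extension argument handling the strata of smaller defect); this organizational backbone is gestured at but not in place in your write-up.
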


\bigskip
\bigskip
\bigskip
\bigskip
\bigskip

\section{Proofs I --- Local models}
\label{Proofs I}

\bigskip

\ssec{Construction of local models}
\label{Construction of local models}

We now recall the construction of certain \textit{local models} for $\VinBun_G$ from \cite{Sch2}. In \cite{Sch2}, one such local model is constructed for each proper parabolic $P$ of $G$, and then used to study the singularities of $\VinBun_G$ lying in the $P$-locus $\VinBun_{G,P}$. Since the present article is only concerned with the principal degeneration $\VinBun_G^{princ}$, whose singularities all lie in the $B$-locus $\VinBun_{G,B}$, only the local model for the Borel $B$ will be needed. This local model also naturally forms a family over the affine space $T_{adj}^+ = \BA^r$, and we restrict it to the line $L_B = \BA^1$ in $T_{adj}^+ = \BA^r$ to obtain the desired local model for $\VinBun_G^{princ}$. We refer to \cite{Sch2} for a more detailed treatment and proofs.

\medskip

\sssec{The open Bruhat locus}
We define \textit{the open Bruhat locus} $\Vin_G^{Bruhat}$ in $\Vin_G$ as the open subvariety obtained by acting by the subgroup $B^- \times N \subset G \times G$ on the section
$$\Fs: \ T_{adj}^+ \ \longto \ \Vin_G \, ,$$
i.e., we define $\Vin_G^{Bruhat}$ as the open image of the map
$$B^- \times N \times T_{adj}^+ \ \ \longto \ \ \Vin_G$$
$$(b,n,t) \ \longmapsto \ (b,n) \cdot \Fs(t) \, .$$
By definition the open Bruhat locus is contained in the non-degenerate locus:
$$\Vin_G^{Bruhat} \ \ \subset \ \ \sideset{_0}{_G}\Vin$$

\medskip

\sssec{GIT-quotients}

We recall the following lemma about the $G \times G$-action on $\Vin_G$:

\begin{lemma}
\label{GIT lemma}
The GIT-quotient
$$\Vin_G \ /\!\!/ \ N \times N^- \ \ \ := \ \ \ \Spec \bigl( k[\Vin_G]^{N \times N^-} \bigr)$$
is naturally isomorphic to $\overline{T} \times T_{adj}^+$. The base change of the resulting map $\Vin_G \longto \Vin_G /\!\!/ N \times N^- = \overline{T} \times T_{adj}^+$ along the inclusion $T \into \overline{T}$ yields a cartesian square
$$\xymatrix@+10pt{
\Vin_G^{Bruhat} \ar[r] \ar[d] & \Vin_G \ar[d]  \\
T \times T_{adj}^+ \ar[r] & \overline{T} \times T_{adj}^+ \\
}$$
in which all arrows are $T$-equivariant. Finally, the left vertical arrow is a $N \times N^-$-torsor; thus we obtain an identification of the stack quotient
$$\Vin_G^{Bruhat}/N \times N^- \ \ \stackrel{\cong}{\longto} \ \ T \times T_{adj}^+ \, .$$
\end{lemma}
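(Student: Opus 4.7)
The plan is to establish the three claims of the lemma together, by first computing the GIT quotient representation-theoretically and then exhibiting the torsor structure on the open Bruhat locus directly using the canonical section $\Fs$.

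For the GIT quotient $\Vin_G \ /\!\!/ \ N \times N^-$, I would use the Peter-Weyl-type decomposition
$$k[\Vin_G] \ = \ \bigoplus_V V^* \otimes V$$
summing over irreducible $V \in \Rep(\Vin_G)$, combined with the classification recalled in Subsection \ref{Definition of Vin_G via classification of reductive monoids}: the irreducibles are parametrized by pairs $(\lambda, \mu) \in \Lambda_G^+ \times \Lambda_T$ subject to $\lambda|_{Z_G} = \mu|_{Z_G}$ and $\lambda \leq \mu$. Taking $N \times N^-$-invariants in each summand picks out a one-dimensional line spanned by a tensor of appropriate extremal weight vectors, and the fact that $V_{\lambda_1 + \lambda_2}$ appears with multiplicity one in $V_{\lambda_1} \otimes V_{\lambda_2}$ with highest weight vector equal to the tensor of the highest weight vectors shows that these lines close up into a monoid algebra. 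Under the substitution $\nu := \mu - \lambda$, which automatically lands in $\Lambda_G^{pos} = \BZ_{\geq 0}\{\alpha_i\}$, one obtains
$$k[\Vin_G]^{N \times N^-} \ = \ k[\Lambda_G^+] \otimes k[\Lambda_G^{pos}] \ = \ k[\overline{T}] \otimes k[T_{adj}^+] \, ,$$
using the standard identification $k[\overline{T}] = \bigoplus_{\lambda \in \Lambda_G^+} k \cdot t^\lambda$; the $T$-equivariance of this identification is manifest from the construction.

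For the cartesian square and torsor statements, I would study the parametrizing map
$$\Phi: \ N^- \times T \times N \times T_{adj}^+ \ \longto \ \Vin_G$$
coming from the defining action of $B^- \times N$ on $\Fs$ via the semidirect decomposition $B^- = T \ltimes N^-$. Composing $\Phi$ with the GIT quotient map produces the projection to $T \times T_{adj}^+$: the $T$-component arises from the $T$-factor of $B^-$ (since $\Fs(s)$ projects to $1 \in T \subset \overline{T}$ under the above representation-theoretic description of the invariants), and the $T_{adj}^+$-component is $v \circ \Fs = \id$. Injectivity of $\Phi$ and equality of its image with the preimage of $T \times T_{adj}^+$ are then checked fiberwise over $T_{adj}^+$: over the open subscheme $T_{adj} \subset T_{adj}^+$ this is the classical open Bruhat decomposition of $G_{enh}$, while over the closed stratum $c_B$ it follows from Lemma \ref{Vinberg lemma} together with the standard Bruhat-cell description of the open $G \times G$-orbit in $\overline{(G/N \times G/N^-)/T}$. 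This simultaneously identifies $\Vin_G^{Bruhat}$ with the preimage of $T \times T_{adj}^+$, exhibits the projection as an $N \times N^-$-torsor, and trivializes it via the section provided by $\Phi$.

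The main obstacle is the boundary analysis of $\Phi$: although the $G \times G$-stabilizer of $\Fs(s)$ grows as $s$ degenerates from $T_{adj}$ to $c_B$, the intersection of this stabilizer with $N \times N^-$ (arranged inside $G \times G$ compatibly with the $B^- \times N$-action defining $\Vin_G^{Bruhat}$) remains trivial, so that the $N \times N^-$-action on the $(B^- \times N)$-orbit through $\Fs(s)$ is free throughout $T_{adj}^+$. With Lemma \ref{Vinberg lemma} handling the closed stratum and a direct semi-continuity argument in between, this freeness is established and the remaining $T$-equivariance assertions follow immediately from the $T$-equivariance of $\Fs$ and of the $G \times G$-action on $\Vin_G$.
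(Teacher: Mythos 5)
Your route is genuinely different from the paper's, which disposes of the lemma in two sentences by citing \cite[3.2.8, 4.1.5]{W1} for the fiberwise statement and the Rees-filtration argument of \cite{GN} for the triviality of the family over $T_{adj}^+$. Your Peter--Weyl computation of $k[\Vin_G]^{N \times N^-}$ is correct and self-contained: the irreducibles of $\Rep(\Vin_G)$ are indeed the $V_\lambda \boxtimes k_\mu$ with $\lambda|_{Z_G} = \mu|_{Z_G}$ and $\lambda \leq \mu$, each isotypic component contributes a line of $N \times N^-$-invariants, Cartan multiplication gives the monoid algebra $k[\Lambda_G^+ \times \Lambda_G^{pos}]$, and the substitution $\nu = \mu - \lambda$ produces $\overline{T} \times T_{adj}^+$ together with the triviality of the family over $T_{adj}^+$ in one stroke. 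This buys independence from \cite{W1} and makes the $T$-equivariance and the normalization $q \circ \Fs = (1, \id)$ (with $q$ the GIT quotient map) transparent, since the invariant functions with $\nu = 0$ evaluate to $1$ on the section.

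The gap is in the fiberwise analysis of $\Phi$. You verify the fibers over the open stratum $T_{adj}$ and over the closed point $c_B$, but $T_{adj}^+ \setminus T_{adj}$ is the union of all coordinate hyperplanes: for semisimple rank $\geq 2$ there are intermediate strata $T^+_{adj,P}$ with $B \subsetneq P \subsetneq G$, whose fibers $\Vin_{G,P}$ are neither $G_{enh}$ nor $\overline{(G/N \times G/N^-)/T}$, and your argument says nothing about them. The ``semi-continuity argument in between'' can at best control the stabilizer of $\Fs(s)$ (its dimension is upper semicontinuous and vanishes at the most degenerate point $c_B$, which lies in the closure of every stratum, so freeness does propagate, at least in characteristic $0$); it cannot deliver the other half of the fiberwise claim, namely that the $N \times N^-$-orbit of $(t,1)\cdot\Fs(s)$ exhausts the fiber of $\Vin_G \to \overline{T} \times T_{adj}^+$ over $(t,s)$ --- a surjectivity statement that no semicontinuity argument provides, and without which the square is not cartesian. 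To close the gap you need, at each $c_P$, the parabolic analogue of Lemma \ref{Vinberg lemma} (the description of $\Vin_{G,P}$ alluded to in Subsection \ref{G-locus and B-locus}) together with the openness of $B^- P$ in $G$; alternatively, quote \cite{W1} for the fiberwise statement as the paper does and retain only your global computation of the invariant ring.
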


\begin{proof}
For the fiberwise (over $T_{adj}^+$) statement, see \cite[3.2.8, 4.1.5]{W1}. The triviality of the family over $T_{adj}^+$ follows from the fact that the Rees filtration (see e.g. \cite[Sec. 5]{GN}) becomes a grading after passing to $N \times N^-$-invariants.
\end{proof}

\medskip

\sssec{The definition of the local models}
Following \cite{Sch2} we now define the local model for the $B$-locus as
$$Y \ \ := \ \ \Maps_{gen}\bigl(X, \, \Vin_G / B \times N^- \ \supset \ \Vin_G^{Bruhat} / B \times N^- \bigr) \, .$$
Just as for $\VinBun_G$, the map $\Vin_G \to T_{adj}^+$ induces a map
$$v: \ Y \ \longto \ T_{adj}^+ \, ,$$
realizing the local model $Y$ as a multi-parameter family over $T_{adj}^+$. As discussed in Subsection \ref{The stratification parametrized by parabolics} for $\VinBun_G$, the map $v$ induces a stratification of the local model $Y$ indexed by parabolic subgroups $P$ of $G$; as we will restrict $Y$ to the principal direction in $T_{adj}^+$, we are again only interested in the $G$-locus $Y_G$ and the $B$-locus $Y_B$. As in the case of $\VinBun_G$, we denote by ${}_0Y$ the defect-free locus, i.e., the locus obtained by requiring that the map from the curve $X$ above factors through the open substack $\sideset{_0}{_G}\Vin / B \times N^-$.

\medskip

\ssec{Basic properties}

\sssec{Structure maps to spaces of divisors}
By Lemma \ref{GIT lemma} above the natural map from the stack quotient to the GIT quotient
$$\Vin_G / N \times N^- \ \ \longto \ \ \Vin_G /\!\!/ N \times N^-$$
induces a map
$$Y \ \ \longto \ \ \Maps_{gen}(X, \overline{T}/T \supset T/T) \ \times \ T_{adj}^+ \, .$$
Composing this map with the projection onto the first factor and using that
$$\Maps_{gen}(X, \overline{T}/T \supset T/T = pt) \ \ = \ \ \bigcup_{\check\theta \in \Lambdach_G^{pos}} X^{\check\theta}$$
we obtain a map
$$Y \ \longto \ \bigcup_{\check\theta \in \Lambdach_G^{pos}} X^{\check\theta} \, .$$
For any element $\check\theta \in \Lambdach_G^{pos}$ we then define $Y^{\check\theta}$ as the inverse image of $X^{\check\theta}$ under this map. We denote the resulting restricted map by
$$\pi: \ Y^{\check\theta} \ \longto \ X^{\check\theta} \, .$$

\medskip

\sssec{Relation to Zastava spaces}
Next let ${}_0Z^{\check\theta}$ denote the defect-free Zastava space for the Borel $B$, introduced and studied in \cite{FM}, \cite{FFKM}, and \cite{BFGM}; its definition is recalled in Subsection \ref{Recollections on Zastava spaces} below.
Our local model $Y^{\check\theta}$ may be viewed as a canonical multi-parameter degeneration of the space ${}_0Z^{\check\theta}$. Indeed, the definition of $Y^{\check\theta}$ implies that the fiber $Y^{\check\theta}|_{c_G}$ of $Y^{\check\theta}$ over the point $c_G \in T_{adj}^+$ is naturally isomorphic to ${}_0Z^{\check\theta}$; however, as in Subsection \ref{Stratification by parabolics} we have the following stronger assertion:

\begin{remark}
\label{G-locus of local model}
The $G$-locus $Y_G^{\check\theta}$ of the local model $Y^{\check\theta}$ forms a canonically trivial fiber bundle over $T_{adj}$:
$$Y_G^{\check\theta} \ \ = \ \ {}_0Z^{\check\theta} \times T_{adj}$$
\end{remark}

\bigskip

\ssec{Recollections on Zastava spaces}
\label{Recollections on Zastava spaces}

\sssec{The definition of Zastava space}

Recall from \cite{FM}, \cite{FFKM}, and \cite{BFGM} that the \textit{Zastava space} $Z$ is defined as
$$Z \ := \ \Maps_{gen}(X, \ (\overline{G/N}) / T \times N^- \ \supset \ pt) \, ,$$
where the dense open point corresponds to the open Bruhat cell $B \cdot N^- \subset G$. We now recall some relevant properties, referring the reader to \cite{FM}, \cite{FFKM}, and \cite{BFGM} for proofs.

\medskip

Similarly to the discussion for our local model $Y$ above, the Zastava space $Z$ decomposes into a disjoint union of spaces $Z^{\check\theta}$ for $\check\theta \in \Lambdach_G^{pos}$, which come equipped with structure maps
$$\pi_Z: \ Z^{\check\theta} \ \longto \ X^{\check\theta} \, .$$
Furthermore, the open subspace
$${}_0Z \ \ := \ \ \Maps_{gen}(X, \ (G/N) / T \times N^- \ \supset \ pt)$$
of $Z$ is smooth.

\medskip

\sssec{Stratification of Zastava spaces}
\label{Zastava stratification}
The Zastava spaces $Z^{\check\theta}$ possess \textit{defect stratifications} similar to the stratification of $\barBun_B$ discussed in Subsection \ref{Drinfeld's relative compactification} above: The action map
$$\overline{T} \times G/N \ \longto \ \overline{G/N}$$
induces locally closed immersions
$$X^{\check\theta'} \times {}_0Z^{\check\theta - \check\theta'} \ \ \longinto \ \ Z^{\check\theta}$$
for any $\check\theta, \check\theta' \in \Lambdach_G^{pos}$ with $\check\theta' \leq \check\theta$. We denote the corresponding locally closed substack by ${}_{\check\theta'}Z^{\check\theta}$. Ranging over all $\check\theta' \in \Lambdach_G^{pos}$ satisfying $0 \leq \check\theta' \leq \check\theta$, the substacks ${}_{\check\theta'}Z^{\check\theta}$ form a stratification of $Z^{\check\theta}$:
$$Z^{\check\theta} \ \ = \ \ \bigcup_{0 \leq \check\theta' \leq \check\theta} \ {}_{\check\theta'}Z^{\check\theta}$$
Finally, the structure map
$$\pi_Z: \ Z^{\check\theta} \ \longto \ X^{\check\theta}$$
admits a natural section
$$X^{\check\theta} \ \longto \ Z^{\check\theta}$$
which maps $X^{\check\theta}$ isomorphically onto the stratum of maximal defect ${}_{\check\theta}Z^{\check\theta}$.

\medskip

\sssec{Relative Zastava spaces}

We will also need a relative version $Z_{\Bun_T}$ of the Zastava space $Z$ introduced above, defined as
$$Z_{\Bun_T} \ := \ \Maps_{gen}(X, \ (\overline{G/N}) / T \times B^- \ \supset \ \cdot/T) \, .$$
The relative Zastava space $Z_{\Bun_T}$ comes equipped with a forgetful map $Z_{\Bun_T} \to \Bun_T$ induced by the composite map
$$(\overline{G/N}) / T \times B^- \ \longto \ \cdot/B^- \ \longto \ \cdot/T \, .$$
Note that the fiber of this forgetful map over the trivial $T$-bundle agrees with the Zastava space $Z$ defined above.
The previous discussion of the Zastava space $Z$ carries over to the relative Zastava space $Z_{\Bun_T}$, with the analogous notation.

\medskip

We can now discuss:

\bigskip

\ssec{Stratification of the $B$-locus of the local models}

The $B$-locus $Y^{\check\theta}_B$ of the local model $Y^{\check\theta}$ admits a stratification analogous to the one of $\VinBun_{G,B}$. To state it, let $\check\theta_1, \check\mu, \check\theta_2, \check\theta \in \Lambdach_G^{pos}$ with $\check\theta_1 + \check\mu + \check\theta_2 = \check\theta$. Then as in Subsection \ref{The defect stratification} above there exist natural compactified strata maps
$$\bar{f}_{\check\theta_1, \check\mu, \check\theta_2}: \ \ Z^{-, \check\theta_1}_{\Bun_T} \underset{\Bun_T}{\times} \Bigl( X^{\check\mu} \times Z^{\check\theta_2} \Bigr) \ \ \ \longto \ \ \ Y^{\check\theta}_B \, ,$$
and the analogous stratification result is:

\medskip

\begin{corollary}
\label{stratification of local models}
The maps $\bar{f}_{\check\theta_1, \check\mu, \check\theta_2}$ are finite, and the restricted maps
$$f_{\check\theta_1, \check\mu, \check\theta_2}: \ \ {}_0Z^{-, \check\theta_1}_{\Bun_T} \underset{\Bun_T}{\times} \Bigl( X^{\check\mu} \times {}_0Z^{\check\theta_2} \Bigr) \ \ \ \longto \ \ \ Y^{\check\theta}_B$$
form isomorphisms onto locally closed substacks
$${}_{\check\theta_1, \check\mu, \check\theta_2}Y^{\check\theta}_B \ \ \longinto \ \ Y^{\check\theta}_B \, .$$
The locally closed substacks ${}_{\check\theta_1, \check\mu, \check\theta_2}Y^{\check\theta}_B$ form a stratification of $Y^{\check\theta}_B$, i.e., on the level of $k$-points the space $Y^{\check\theta}_B$ is equal to the disjoint union
$$Y^{\check\theta}_B \ \ = \ \ \bigcup_{\check\theta_1 + \check\mu + \check\theta_2 \, = \, \check\theta} \ {}_{\check\theta_1, \check\mu, \check\theta_2}Y^{\check\theta}_B \, .$$
\end{corollary}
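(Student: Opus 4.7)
The plan is to mirror the proof of Proposition \ref{defect stratification proposition} in the local setting, replacing the quotient by $G \times G$ throughout by the quotient by $B \times N^-$. First I would construct the maps $\bar{f}_{\check\theta_1, \check\mu, \check\theta_2}$ in direct parallel with Subsection \ref{Compactifying the strata maps}: the closed embeddings of $\overline{G/N}$, $\overline{T}$, and $\overline{G/N^-}$ into $\Vin_{G,B}$ from Subsection \ref{The embedding of barT into Vin_G}, combined via the asymptotic semigroup multiplication
$$\overline{G/N} \, \times \, \overline{T} \, \times \, \overline{G/N^-} \ \longto \ \Vin_{G,B},$$
descend to a map of $(B \times N^-)$-quotient stacks. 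Taking $\Maps_{gen}$ into the resulting target and recognizing the source as a disjoint union, indexed by triples $(\check\theta_1, \check\mu, \check\theta_2)$, of fiber products of relative Zastava spaces with spaces of divisors (cf. Subsection \ref{Recollections on Zastava spaces}) produces the desired maps $\bar{f}_{\check\theta_1, \check\mu, \check\theta_2}$.

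Next, for finiteness, I would observe that $\bar{f}_{\check\theta_1, \check\mu, \check\theta_2}$ lies over the addition map $X^{\check\theta_1} \times X^{\check\mu} \times X^{\check\theta_2} \to X^{\check\theta}$, which is finite, giving quasi-finiteness. For properness I would reduce to the already-established global case: the natural forgetful map from $Y^{\check\theta}_B$ to $\VinBun_{G,B}$ induced by $B \times N^- \into G \times G$ compares the local strata maps to the global ones $\bar{f}_{\lambdach_1, \check\theta, \lambdach_2}$, whose finiteness is recalled at the end of Subsection \ref{Compactifying the strata maps}. Alternatively, one can argue directly, using that the relative Zastava spaces $Z^-_{\Bun_T}$ and $Z$ are compactifications of their defect-free loci by twisting with the monoid $\overline{T}$ in exact analogy with the construction of $\barBun_B$ and $\barBun_{B^-}$. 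For the locally-closed immersion statement on the defect-free restriction $f_{\check\theta_1, \check\mu, \check\theta_2}$, I would use that the generic non-degeneracy condition defining ${}_0Z$ forces the map from $X$ to factor generically through the open Bruhat cell, so that each geometric point of the image determines the data in the source uniquely.

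The stratification on $k$-points then follows from Proposition \ref{defect stratification proposition} combined with the factorization structure on spaces of divisors: every $k$-point of $Y^{\check\theta}_B$ maps to a $k$-point of $\VinBun_{G,B}$, and its defect lives in $X^{\check\theta}$; this defect divisor splits canonically into three pieces recording the separate contributions of the ``$B^-$-side'', the central divisor on $X^{\check\mu}$, and the ``$B$-side'', which yields the triple $(\check\theta_1, \check\mu, \check\theta_2)$ summing to $\check\theta$. The main technical obstacle I anticipate is the precise identification of the $(B \times N^-)$-quotient of the source of the asymptotic-semigroup multiplication map with the fiber product $Z^{-, \check\theta_1}_{\Bun_T} \underset{\Bun_T}{\times} \bigl( X^{\check\mu} \times Z^{\check\theta_2} \bigr)$ after passing to $\Maps_{gen}$: this requires careful bookkeeping of the various $T$-actions at play --- two on $\overline{T}$ and one each on $\overline{G/N}$ and $\overline{G/N^-}$ --- in order to match them correctly across the fiber product over $\Bun_T$. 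Once this identification is set up, the finiteness and stratification conclusions follow by close analogy with the global case.
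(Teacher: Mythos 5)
Your overall plan --- construct $\bar f_{\check\theta_1,\check\mu,\check\theta_2}$ by descending the asymptotic-semigroup multiplication $\overline{G/N}\times\overline{T}\times\overline{G/N^-}\longto\Vin_{G,B}$ to $B\times N^-$-quotients and then passing to $\Maps_{gen}$, with the bookkeeping of $T$-actions identifying the source with $Z^{-,\check\theta_1}_{\Bun_T}\underset{\Bun_T}{\times}(X^{\check\mu}\times Z^{\check\theta_2})$, and deduce the stratification on $k$-points from the defect divisor --- is exactly the route the paper intends: the corollary is stated without independent proof, as the local analogue of Proposition \ref{defect stratification proposition}, with the details deferred to \cite{Sch2}.

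The genuine weak point is your treatment of finiteness. First, the reduction to the global case via the forgetful map $Y^{\check\theta}_B\to\VinBun_{G,B}$ does not work as stated: the comparison square is merely commutative, not cartesian, and properness does not pull back along such a square. The only general fact available ($g\circ f$ proper and $g$ separated imply $f$ proper) would require the composite source map to be proper, but the comparison map from the local source to the global one factors through open loci of fiber products of the form $\barBun_B\times_{\Bun_G}\Bun_{B^-}$ (Zastava spaces sit as \emph{open} substacks there), so that composite is not proper. Second, the ``exact analogy'' with the global finiteness argument also breaks: the paper's global proof uses that $\barBun_B$ and $\barBun_{B^-}$ are proper over $\Bun_G$ componentwise, whereas the (relative) Zastava spaces are \emph{affine} over $X^{\check\theta}$, so they are not compactifications in any properness sense and the global input has no verbatim local analogue. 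The properness of the local $\bar f$ has to be supplied differently, e.g.\ from the finiteness of the $\overline{T}$-twisting (monoid action) maps that build the compactified strata maps, by a valuative-criterion argument, or by a genuine smooth-local (``interplay'') identification of the local and global strata maps under which finiteness can be checked smooth-locally on the target. Relatedly, ``the map lies over the finite addition map of divisors, hence is quasi-finite'' is not a valid implication as literally stated; what one actually argues (and what the paper's terse global justification means) is that a point of the target determines its preimages up to the finitely many splittings of its defect divisor. With these two points repaired, the rest of your outline matches the intended argument.
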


\medskip

\sssec{Defect and section}
\label{Defect and section}
We use the terms \textit{defect value} and \textit{defect} as in Subsection \ref{The defect stratification} above.
The stratum ${}_{0, \check\theta, 0}Y^{\check\theta}_B$ of maximal defect $\check\mu = \check\theta$ will also be denoted by ${}_{\check\theta}Y^{\check\theta}_B$. By definition we have ${}_{\check\theta}Y^{\check\theta}_B \ \cong \ X^{\check\theta}$. In fact, the structure map
$$\pi: \ Y^{\check\theta} \ \longto \ X^{\check\theta}$$
admits a natural section
$$X^{\check\theta} \ \longto \ Y^{\check\theta}_B \ \longinto \ Y^{\check\theta}$$
which maps $X^{\check\theta}$ isomorphically onto the stratum of maximal defect ${}_{\check\theta}Y^{\check\theta}_B$.

\bigskip

\ssec{Factorization in families}
\label{Factorization in families}

The local models $Y^{\check\theta}$ \textit{factorize in families} over $T_{adj}^+$ in the sense of the following lemma:

\medskip

\begin{lemma}
\label{factorization in families}
Let $\check\theta_1, \check\theta_2 \in \Lambdach_G^{pos}$ and let $\check\theta := \check\theta_1 + \check\theta_2$. Then the addition map of effective divisors
$$X^{\check\theta_1} \stackrel{\circ}{\times} X^{\check\theta_2} \ \longto \ X^{\check\theta}$$
induces the cartesian square
$$\xymatrix@+10pt{
Y^{\check\theta_1} \underset{ \ T_{adj}^+}{\stackrel{\circ}{\times}} Y^{\check\theta_2} \ \ar[rr] \ar[d] & & \ Y^{\check\theta} \ar[d] \\
X^{(\check\theta_1)} \stackrel{\circ}{\times} X^{(\check\theta_2)} \ \ar[rr] & & \ X^{(\check\theta)} \\
}$$
where the top horizontal arrow commutes with the natural maps to $T_{adj}^+$.
\end{lemma}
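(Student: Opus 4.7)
The plan is to establish the cartesian square by producing mutually inverse morphisms between the top-left corner $Y^{\check\theta_1}\underset{T_{adj}^+}{\stackrel{\circ}{\times}} Y^{\check\theta_2}$ and the pullback $Y^{\check\theta}|_{X^{\check\theta_1}\stackrel{\circ}{\times} X^{\check\theta_2}}$, via an open-cover gluing argument on the curve. The key input is a rigidification coming from Lemma \ref{GIT lemma}: after passing from $\Vin_G^{Bruhat}/(N\times N^-)\cong T\times T_{adj}^+$ to the further quotient by the residual $T=B/N$ (which translates the $T$-factor and acts trivially on $T_{adj}^+$), one obtains a canonical identification of stacks $\Vin_G^{Bruhat}/(B\times N^-)\cong T_{adj}^+$. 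Consequently, any map from a connected base $S$ into $\Vin_G^{Bruhat}/(B\times N^-)$ is determined by a single constant value in $T_{adj}^+(S)$; this is the rigidity that makes the Bruhat open ``invisible'' for gluing.

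For the direction from right to left, let $f\colon X\times S\to\Vin_G/(B\times N^-)$ be an $S$-point of $Y^{\check\theta}$ whose divisor value $D\in X^{\check\theta}(S)$ lies in the disjoint locus; write $D=D_1+D_2$ with $D_i\colon S\to X^{\check\theta_i}$ and $\supp(D_1)\cap\supp(D_2)=\emptyset$. Set $U_i:=(X\times S)\setminus\supp(D_{3-i})$, so $U_1\cup U_2=X\times S$ and $U_1\cap U_2=(X\times S)\setminus\supp(D)$. I would define $f_i$ by taking $f_i|_{U_i}:=f|_{U_i}$ (whose defect equals $D_i$, supported in $U_i$) and $f_i|_{U_{3-i}}$ to be the constant map of value $t:=v(f)\in T_{adj}^+(S)$ into $\Vin_G^{Bruhat}/(B\times N^-)\cong T_{adj}^+$. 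These recipes agree on $U_1\cap U_2$, where $f$ itself is Bruhat with value $t$, so they glue via the sheaf property of mapping stacks to a map $f_i\in Y^{\check\theta_i}(S)$ with divisor value $D_i$ and the same $T_{adj}^+$-value $t$; thus $(f_1,f_2)$ lies in the fiber product over $T_{adj}^+$.

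The inverse construction reverses this recipe: given $(f_1,f_2)$ with disjoint $D_i$ and common value $t\in T_{adj}^+(S)$, each $f_i|_{U_{3-i}}$ is automatically Bruhat with value $t$ (its defect is contained in $\supp(D_i)\subset X\times S\setminus U_{3-i}$), so $f_1|_{U_1}$ and $f_2|_{U_2}$ agree on $U_1\cap U_2$ as constant-$t$ Bruhat maps and glue to $f\colon X\times S\to\Vin_G/(B\times N^-)$ with total defect $D_1+D_2\in X^{\check\theta}(S)$. The two constructions are manifestly inverse. Compatibility of the top horizontal arrow with the natural maps to $T_{adj}^+$ is automatic, since on both sides the $T_{adj}^+$-value is simply the common constant $t$.

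The main obstacle will be the clean verification, in families over an arbitrary base $S$, of the stacky identification $\Vin_G^{Bruhat}/(B\times N^-)\cong T_{adj}^+$; although this follows formally from Lemma \ref{GIT lemma} by taking the residual $T$-quotient, one must check that the $T$-torsor structure behaves well in families and that the forgetful map to $\Maps_{gen}(X,\overline{T}/T\supset\cdot/T)$ correctly records the divisor on each open $U_i$. Once this rigidification is in hand, the remainder reduces to Zariski descent for mapping stacks applied to the two-element open cover $\{U_1,U_2\}$ of $X\times S$, which is the same mechanism underpinning Beilinson--Drinfeld factorization for the Zastava spaces in \cite{FM}, \cite{FFKM}, and \cite{BFGM}.
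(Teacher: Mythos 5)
Your argument is correct, and in fact the paper itself offers no proof of this lemma: it is stated as part of the recollections on the local models, with all proofs deferred to \cite{Sch2}. Your mechanism is the standard one for factorization of Zastava-type spaces, and the two inputs you isolate are exactly the right ones: (i) the rigidification $\Vin_G^{Bruhat}/(B\times N^-)\cong T_{adj}^+$, which follows from Lemma \ref{GIT lemma} because the residual $T=B/N$ acts on the $T$-factor of $\Vin_G^{Bruhat}/(N\times N^-)\cong T\times T_{adj}^+$ by translation (simply transitively) and trivially on $T_{adj}^+$, so the stack quotient is the scheme $T_{adj}^+$; and (ii) the identification, via the cartesian square of Lemma \ref{GIT lemma}, of the non-Bruhat locus of an $S$-point of $Y^{\check\theta}$ with the support of its divisor value, which is what guarantees that $U_1\cup U_2=X\times S$ on the disjoint locus and that $f$ is Bruhat of constant value $t=v(f)$ on $U_1\cap U_2$. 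Two small points to make explicit in a final write-up: the value $v(f)$ is indeed constant along $X$ (i.e., pulled back from $S$) because $X$ is proper and $T_{adj}^+$ is affine, which is what makes ``the constant map of value $t$'' well defined on $U_{3-i}$; and since the overlap maps land in a scheme, the gluing datum for the two-element cover is genuinely an equality rather than a choice of isomorphism, so descent for the mapping stack applies with no cocycle issue. With these spelled out, the two constructions are visibly mutually inverse and compatible with the projections to $T_{adj}^+$ and to the spaces of divisors, which is the assertion of the lemma.
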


\medskip

The above lemma implies that the fibers of the map $Y^{\check\theta} \to T_{adj}^+$ are \textit{factorizable} in the usual sense. I.e., for each $t \in T_{adj}^+$ the fiber $Y^{\check\theta}|_t$ fits into the following cartesian square:
$$\xymatrix@+10pt{
Y^{\check\theta_1}|_t \stackrel{\circ}{\times} Y^{\check\theta_2}|_t \ \ar[rr] \ar[d] & & \ Y^{\check\theta}|_t \ar[d] \\
X^{(\check\theta_1)} \stackrel{\circ}{\times} X^{(\check\theta_2)} \ \ar[rr] & & \ X^{(\check\theta)} \\}
$$
In particular, taking $t = c_B \in T_{adj}^+$ we conclude the $B$-locus $Y_B$ is factorizable. Taking $t = c_G \in T_{adj}^+$ we recover the fact that the defect-free Zastava spaces ${}_0Z^{\check\theta}$ are factorizable.

\bigskip\bigskip\bigskip

\section{Proofs II --- The nearby cycles theorem}
\label{Proofs II}

\bigskip

\ssec{Statement of the theorem on the level of local models}

\medskip

\sssec{The principal degeneration of the local model}
\label{The principal degeneration of the local model}
Exactly as for $\VinBun_G$ we denote by $Y^{\check\theta, princ} \to L_B = \BA^1$ the restriction of the local model $Y^{\check\theta} \to T_{adj}^+$ to the principal line $L_B = \BA^1$. The spaces $Y^{\check\theta, princ}$ form local models for the principal degeneration $\VinBun_G^{princ}$ in the sense of \cite[Sec. 3]{BFGM}; they are related to the space $\VinBun_G^{princ}$ in the exact same way in which the Zastava spaces $Z^{\check\theta}$ defined above are related to Drinfeld's relative compactification $\barBun_B$. Like $\VinBun_G^{princ}$, the space $Y^{\check\theta, princ}$ consists of only the $G$-locus $Y_G^{\check\theta, princ} = {}_0Z^{\check\theta} \times (L_B \setminus \{ 0 \})$ and the $B$-locus $Y_B^{\check\theta, princ} = Y_B^{\check\theta}$.
By the exact same argument as in \cite[Sec. 3]{BFGM}, \cite[Sec. 4.3]{BG2}, and also \cite[Sec. 4]{Sch1}, it suffices to prove Theorem \ref{main theorem for nearby cycles} above on the level of the local models $Y^{\check\theta, princ}$. In this subsection we restate the local version of Theorem \ref{main theorem for nearby cycles} above for the convenience of the reader, in Theorem \ref{main theorem for local models} below. The remainder of this section is then devoted to the proof of Theorem \ref{main theorem for local models}.

\medskip

Before restating the main theorem, we note that Lemma \ref{factorization in families} above implies that the principal local model $Y^{\check\theta, princ}$ factorizes in families over the line $L_B = \BA^1$ in the sense that the addition map of effective divisors
$$X^{\check\theta_1} \stackrel{\circ}{\times} X^{\check\theta_2} \ \longto \ X^{\check\theta}$$
induces the cartesian square
$$\xymatrix@+10pt{
Y^{\check\theta_1, princ} \underset{ \ \BA^1}{\stackrel{\circ}{\times}} Y^{\check\theta_2, princ} \ \ar[rr] \ar[d] & & \ Y^{\check\theta, princ} \ar[d] \\
X^{(\check\theta_1)} \stackrel{\circ}{\times} X^{(\check\theta_2)} \ \ar[rr] & & \ X^{(\check\theta)} \\
}$$
where the top horizontal arrow commutes with the natural maps to $\BA^1$.

\medskip

\sssec{The local theorem}
To state the local version of Theorem \ref{main theorem for nearby cycles}, let $\check\theta_1, \check\mu, \check\theta_2, \check\theta \in \Lambdach_G^{pos}$ with $\check\theta_1 + \check\mu + \check\theta_2 = \check\theta$.
We denote by $\IC_{Y^{\check\theta, princ}_G}$ the IC-sheaf of the $G$-locus of $Y^{\check\theta, princ}$. This IC-sheaf is a constant sheaf shifted and twisted according to our conventions in Subsection \ref{Conventions and notation} above. Furthermore, we denote by
$$\IC_{Z^{-, \check\theta_1}_{\Bun_T}} \underset{ \ \Bun_T}{\boxtimes} \Bigl( \CF_{\check\mu} \ \boxtimes \ \IC_{Z^{\check\theta_2}} \Bigr)$$
the $*$-restriction of the external product
$$\IC_{Z^{-, \check\theta_1}_{\Bun_T}} \boxtimes \, \Bigl( \CF_{\check\mu} \ \boxtimes \ \IC_{Z^{\check\theta_2}} \Bigr)$$
from the product space to the fiber product
$$Z^{-, \check\theta_1}_{\Bun_T} \underset{\Bun_T}{\times} \Bigl( X^{\check\mu} \times Z^{\check\theta_2} \Bigr) \, ,$$
shifted by $[- \dim \Bun_T]$ and twisted by $(- \tfrac{\dim \Bun_T}{2})$. The statement then is:

\bigskip

\begin{theorem}
\label{main theorem for local models}
There exists an isomorphism of perverse sheaves
$$\gr \, \Psi (\IC_{Y^{\check\theta, princ}_G}) \ \ \cong \ \bigoplus_{\check\theta_1 + \check\mu + \check\theta_2 = \check\theta} \bar{f}_{\check\theta_1, \check\mu, \check\theta_2,*} \ \Bigl( \IC_{Z^{-, \check\theta_1}_{\Bun_T}} \underset{ \ \Bun_T}{\boxtimes} \Bigl( \CF_{\check\mu} \ \boxtimes \ \IC_{Z^{\check\theta_2}} \Bigr) \Bigr)$$
which identifies the action of the Lefschetz-$\sl_2$ on the right hand side with the monodromy action on the left hand side.
\end{theorem}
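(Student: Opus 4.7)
As explained in Subsection \ref{The principal degeneration of the local model}, the argument of \cite[Sec. 3]{BFGM} and \cite[Sec. 4]{Sch1} reduces the global Theorem \ref{main theorem for nearby cycles} to the local Theorem \ref{main theorem for local models}, so the task is to match $\gr \, \Psi(\IC_{Y^{\check\theta, princ}_G})$ with the proposed direct sum along each defect stratum of $Y^{\check\theta}_B$. The plan is to proceed by induction on $\check\theta \in \Lambdach_G^{pos}$ in the usual partial order, using factorization in families (Lemma \ref{factorization in families}) as the principal inductive tool, and the known $\SL_2$ computation of \cite[Sec. 4]{Sch1} as the base case.

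First I would establish the identification away from the main diagonal. Over the disjoint locus $X^{\check\theta_1} \stackrel{\circ}{\times} X^{\check\theta_2} \subset X^{\check\theta}$, Lemma \ref{factorization in families} expresses $Y^{\check\theta, princ}$ as a fiber product $Y^{\check\theta_1, princ} \stackrel{\circ}{\times}_{\BA^1} Y^{\check\theta_2, princ}$. Since the nearby cycles functor is compatible with smooth pullback and with exterior products of $\Psi$'s living over the same base $\BA^1$, this reduces the claim over the disjoint locus to the inductive hypothesis, and the $\sl_2$-action matches by the corresponding factorization property for $\CF_{\check\mu}$ (by construction of the external exterior powers in Lemma \ref{external exterior powers}) and for the IC-sheaves of the Zastava spaces from \cite{BFGM}. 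The base of the induction, after iterating factorization to multiplicity-free divisors, consists of the rank-one statements $\check\theta = \check\alpha_i$, which reduce to the Levi of type $A_1$ and thus to \cite[Sec. 4]{Sch1}. This yields the isomorphism, including the $\sl_2$-action, over the complement of the image of the main diagonal $X \hookrightarrow X^{\check\theta}$, $x \mapsto \check\theta \cdot x$.

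The main obstacle is the step across the main diagonal, where the non-simple coroot summands of $\CF_{\check\theta}$ must be produced. By the inductive identification off the diagonal, the difference between the two sides is a perverse sheaf whose support projects into the main diagonal of $X^{\check\theta}$. I would pin this difference down by analysis of a transversal slice at a point of the deepest stratum ${}_{\check\theta} Y^{\check\theta}_B \cong X^{\check\theta}$ from Subsection \ref{Defect and section}, combined with two inputs: (i) the embedding $\overline{T} \hookrightarrow \Vin_{G,B}$ of Subsection \ref{The embedding of barT into Vin_G}, which controls the local geometry of the maximal-defect stratum in terms of the positive coroot structure of $\check G$; and (ii) the description of $\IC_{Z^{\check\theta}}$ and $\IC_{Z^{-, \check\theta}_{\Bun_T}}$ from \cite{BFGM}, whose diagonal summands are already indexed by $\Kost(\check\theta)$ and whose Kostant-partition contributions are precisely the $i_{\CK, *}\CP_{\CK}$. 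Matching these forces the non-simple coroot summands of $\CF_{\check\theta}$ to appear in $\gr \, \Psi$ with the correct multiplicities.

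To close the argument, I would invoke Gabber's Proposition \ref{Gabber's theorem}: the sheaf $\gr \, \Psi(\IC_{Y^{\check\theta, princ}_G})$ is pure of weight zero and hence semisimple, so the direct sum decomposition and the matching of the Lefschetz-$\sl_2$-action on each isotypic component are forced by matching Cartan weights with monodromy weights. On the open stratum this matches by \cite{Sch1}, and semisimplicity together with the finiteness of the strata maps $\bar f_{\check\theta_1, \check\mu, \check\theta_2}$ (Corollary \ref{stratification of local models}) propagates the identification to all strata. The hardest step is the inductive one across the main diagonal: reconstructing the diagonal summands of $\CF_{\check\theta}$ prescribed by $\check G$, which are invisible on the generic locus and so cannot be extracted from factorization alone.
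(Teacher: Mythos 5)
Your scaffolding matches the paper's: induction on $\check\theta$, factorization in families to handle everything away from the main diagonal $\Delta_X \subset X^{\check\theta}$, semisimplicity (via Gabber's theorem and finiteness of the $\bar f_{\check\theta_1,\check\mu,\check\theta_2}$) to split off summands by support and to pin down the Lefschetz-$\sl_2$-action from Weil weights, and a recognition that the whole difficulty is concentrated on the main diagonal. You also correctly note that the lower-defect strata are handled by intermediate extension from the locus where the inductive hypothesis applies. Up to that point the proposal is sound and essentially the paper's argument.

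The genuine gap is in the step you yourself flag as the hardest one. Your proposed treatment of the main diagonal --- ``analysis of a transversal slice at a point of the deepest stratum,'' combined with the embedding $\overline{T} \hookrightarrow \Vin_{G,B}$ and the BFGM description of $\IC_{Z^{\check\theta}}$ --- is not a computation but a pointer to one, and it omits the one input without which the multiplicity on $\Delta_X$ cannot be determined: the identification of the $!$-restriction of $\Psi(\IC_{Y^{\check\theta,princ}_G})$ to the maximal-defect stratum $X^{\check\theta}$ with the complex $\widetilde\Omega^{\check\theta} = \pi_{Z,!}\,\IC_{{}_0Z^{\check\theta}}$ (Proposition \ref{stalks}, imported from \cite{Sch2}). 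The paper's Lemma \ref{lemma on the diagonal} works entirely in the Grothendieck group: it expands $\widetilde\Omega^{\check\theta}$ via Lemma \ref{tilde Omega lemma} as $\sum \add_*(\Omega^{\check\theta_1}\boxtimes U^{\check\theta_2})$, uses the Kostant-partition formulas of Lemma \ref{Omega and U} to isolate the terms supported on $\Delta_X$ (yielding $S_1^{\check\theta} = \Qellbar_{\Delta_X}(0)-\Qellbar_{\Delta_X}(1)$ when $\check\theta$ is a coroot, $0$ otherwise), subtracts the contribution $S_2^{\check\theta}$ of the already-constructed off-diagonal part (computed via $\IC_{Z^{\check\theta}}|^!_{X^{\check\theta}}=U^{\check\theta}$), and only then reconstructs the perverse sheaf $(H_{\check\theta})_{\text{on}\,\Delta_X}$ from its class. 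In particular the outcome is that the main-diagonal summand is nonzero \emph{only when $\check\theta$ is itself a coroot}, in which case it is the single Picard--Lefschetz oscillator $\CP_1 = V\otimes\IC_{\Delta_X}$; the summands of $\CF_{\check\theta}$ attached to longer Kostant partitions live on partial diagonals and are already produced by your factorization step. Your sketch neither states this dichotomy nor supplies the stalk formula and the $\Omega/U$ bookkeeping needed to prove it, so the induction step is not closed as written.
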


\bigskip

Now to prove Theorem \ref{main theorem for nearby cycles} above, it is sufficient to prove Theorem \ref{main theorem for local models} for all $\check\theta \in \check\Lambda_G^{pos}$; this follows by a standard argument usually referred to as the ``interplay principle''. The interplay principle is for example carried out in \cite[Sec. 3, 8.1]{BFGM}, or in \cite[Sec. 4.3, 4.6, 4.7]{BG2}; there the analogous interplay between Drinfeld's compactification $\barBun_B$ and the Zastava spaces is used. In the case of the Drinfeld-Lafforgue-Vinberg degeneration $\VinBun_G$ and the spaces $Y^{\check\theta}$, the interplay principle works completely analogously: For $G=\SL_2$, it is spelled out in our earlier paper \cite{Sch1}, for example in the proof of Lemma 6.2.4 of \cite{Sch1}; the same proof applies verbatim in the case of an arbitrary reductive group $G$ (replacing non-negative integers $n$ by positive coweights $\check\theta \in \check\Lambda_G^{pos}$).

\bigskip

\ssec{Factorization and monodromy}

We begin by showing that the nearby cycles factorize in a manner compatible with the monodromy action:

\medskip

\begin{proposition}
\label{factorization of gr Psi}
Let $\check\theta, \check\theta_1, \check\theta_2 \in \Lambdach_G^{pos}$ with $\check\theta_1 + \check\theta_2 = \check\theta$. Then on the disjoint locus $Y^{\check\theta_1, princ}_B \overset{\circ}{\times} Y^{\check\theta_2, princ}_B$ there exists a canonical isomorphism
$$\gr \, \Psi(\IC_{Y^{\check\theta, princ}_G}) \Big|^*_{Y^{\check\theta_1, princ}_B \overset{\circ}{\times} Y^{\check\theta_2, princ}_B} \ \ = \ \ \gr \, \Psi(\IC_{Y^{\check\theta_1, princ}_G}) \, \overset{\circ}{\boxtimes} \, \gr \, \Psi(\IC_{Y^{\check\theta_2, princ}_G})$$
which respects the action of the Lefschetz-$\sl_2$ on both sides.
Here the left hand side denotes the $*$-pullback of $\gr \, \Psi(\IC_{Y^{\check\theta, princ}_G})$ along the \'etale factorization map
$$Y^{\check\theta_1, princ}_B \overset{\circ}{\times} Y^{\check\theta_2, princ}_B \ \longto \ Y^{\check\theta, princ}_B \, .$$
\end{proposition}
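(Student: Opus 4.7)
My plan is to deduce the factorization of nearby cycles from the factorization in families of the local models (Lemma \ref{factorization in families}), combined with the standard K\"unneth-type compatibility of $\Psi$ with fiber products over $\BA^1$.

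First, specializing Lemma \ref{factorization in families} to the principal line $L_B = \BA^1$ produces an \'etale map
$$Y^{\check\theta_1, princ} \underset{\BA^1}{\stackrel{\circ}{\times}} Y^{\check\theta_2, princ} \ \longto \ Y^{\check\theta, princ}$$
compatible with the projections to $\BA^1$; on the special fibers it restricts to the \'etale factorization map $Y^{\check\theta_1, princ}_B \overset{\circ}{\times} Y^{\check\theta_2, princ}_B \to Y^{\check\theta, princ}_B$ from the statement. Since the unipotent nearby cycles functor commutes with \'etale pullback, it suffices to compute $\Psi(\IC_{Y^{\check\theta, princ}_G})$ after pulling back along this \'etale map; in other words, it suffices to identify the nearby cycles of the fiber product $Y^{\check\theta_1, princ} \times_{\BA^1} Y^{\check\theta_2, princ}$ (restricted to disjoint divisors) with the external product of the nearby cycles of the two factors.

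Next, I would use the canonical trivialization of the $G$-locus from Remark \ref{G-locus of local model} to identify
$$Y^{\check\theta_1, princ}_G \underset{\BA^1\setminus\{0\}}{\stackrel{\circ}{\times}} Y^{\check\theta_2, princ}_G \ \cong \ {}_0Z^{\check\theta_1} \stackrel{\circ}{\times} {}_0Z^{\check\theta_2} \times \bigl(\BA^1 \setminus \{0\}\bigr).$$
Under this identification the IC sheaf of the $G$-locus is a shift and twist of the constant sheaf, hence tautologically an external K\"unneth product of $\IC_{Y^{\check\theta_1, princ}_G}$ and $\IC_{Y^{\check\theta_2, princ}_G}$ pulled back along the diagonal $\BA^1 \hookrightarrow \BA^1 \times \BA^1$. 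The decisive sheaf-theoretic input is then the following K\"unneth compatibility of $\Psi$: for smooth $F_i$ on $X_i|_{\BA^1 \setminus \{0\}}$ there is a canonical isomorphism
$$\Psi\bigl(F_1 \, \boxtimes_{\BA^1 \setminus \{0\}}  F_2\bigr) \ \cong \ \Psi(F_1) \, \boxtimes \, \Psi(F_2),$$
where the left-hand $\Psi$ refers to nearby cycles along the diagonal map to $\BA^1$, and the monodromy operator transports to $N_1 \otimes \mathrm{id} + \mathrm{id} \otimes N_2$. This is a standard consequence of Beilinson's construction of unipotent nearby cycles via the maximal extension functor $\Xi$: the maximal extension of an external product over $\BA^1 \times \BA^1$ splits as the external product of the individual maximal extensions, and restriction along the diagonal recovers the sum of the two monodromies.

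Finally, passing to the associated graded with respect to the monodromy filtration: for two filtered objects with nilpotent operators $N_1, N_2$, the monodromy filtration on the tensor product (with respect to $N_1 \otimes \mathrm{id} + \mathrm{id} \otimes N_2$) is the convolution of the two individual monodromy filtrations, and the resulting Lefschetz-$\sl_2$ representation on the associated graded is the tensor product of the two Lefschetz-$\sl_2$ representations (this is a purely linear-algebraic statement about representations of $\sl_2$ via Jacobson--Morozov, cf. Lemma \ref{Lefschetz-sl_2}). This upgrades the K\"unneth compatibility of $\Psi$ to the desired K\"unneth compatibility of $\gr \, \Psi$ as Lefschetz-$\sl_2$-modules, proving the proposition. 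The step requiring the most care is the K\"unneth compatibility of $\Psi$ itself together with the explicit identification of the monodromy as $N_1 \otimes \mathrm{id} + \mathrm{id} \otimes N_2$; this is where Beilinson's machinery is essential, and where one must verify that the $\sl_2$-structures match strictly and not merely up to twist.
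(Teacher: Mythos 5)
Your proposal follows essentially the same route as the paper: both deduce the statement from the factorization-in-families cartesian square for $Y^{\check\theta, princ}$ over $\BA^1$, combined with the K\"unneth compatibility of unipotent nearby cycles along an \'etale factorization map (with monodromy transported to $N_1 \otimes \mathrm{id} + \mathrm{id} \otimes N_2$, via \cite{BB}) and the linear-algebraic fact that the associated graded of the resulting tensor product is the tensor product of Lefschetz-$\sl_2$-representations. The one point you gloss over is that the K\"unneth formula for the \emph{unipotent} nearby cycles requires knowing that the full nearby cycles of all three sheaves involved are already unipotent; the paper supplies this from \cite[Lemma 8.0.4]{Sch2}.
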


\medskip

\begin{proof}
We first recall how the nearby cycles functor behaves with respect to fiber products. To do so, let $U \to \BA^1$ and $U' \to \BA^1$ be two stacks or schemes over $\BA^1$, let $F$ and $F'$ be perverse sheaves on $U|_{\BA^1 \setminus \{0\}}$ and $U'|_{\BA^1 \setminus \{0\}}$, and denote
$$F \underset{\ \BA^1}{\boxtimes} F' \ := \ \bigl( F \boxtimes F' \bigr) \big|^*_{U \underset{ \, \BA^1}{\times} U'}[-1](-\tfrac{1}{2}) \, .$$
Denote by $N$ and $N'$ the monodromy operators of $\Psi(F)$ and $\Psi(F')$.
Assume finally that the full nearby cycles $\Psi_{full}(F)$, $\Psi_{full}(F')$, and $\Psi_{full}(F \underset{\ \BA^1}{\boxtimes} F')$ are unipotent, i.e., that $\Psi_{full}(F) = \Psi(F)$, $\Psi_{full}(F') = \Psi(F')$, and $\Psi_{full}(F \underset{\ \BA^1}{\boxtimes} F') = \Psi(F \underset{\ \BA^1}{\boxtimes} F')$ in our notation.
Then by \cite[Sec. 5]{BB} there exists a canonical isomorphism
$$\Psi(F \underset{\ \BA^1}{\boxtimes} F') \ \ = \ \ \Psi(F) \boxtimes \Psi(F')$$
on the product $Y|_{\{0\}} \times Y'|_{\{0\}}$ under which the action of the monodromy operator on the left hand side corresponds to the action of
$$N \boxtimes \id \ + \ \id \boxtimes N'$$
on the right hand side. In particular
$$\gr \, \Psi(F \underset{\ \BA^1}{\boxtimes} F') \ \ = \ \ \gr \, \Psi(F) \, \boxtimes \, \gr \, \Psi(F')$$
as representations of the Lefschetz-$\sl_2$.
The assertion of the proposition now follows by applying this fact to the cartesian diagram in Subsection \ref{The principal degeneration of the local model} above. Here we use the fact that the factorization map
$$Y^{\check\theta_1, princ} \underset{ \ \BA^1}{\stackrel{\circ}{\times}} Y^{\check\theta_2, princ} \ \ \longto \ \ Y^{\check\theta, princ}$$
is \'etale and that all three nearby cycles sheaves appearing are unipotent; the former is clear from the cartesian diagram and the latter was shown in \cite[Lemma 8.0.4]{Sch2}.
\end{proof}

\bigskip

\ssec{Stalks of nearby cycles}
\label{Stalks of nearby cycles}

Next we recall a description of the $!$-stalks of the nearby cycles $\Psi (\IC_{Y^{\check\theta, princ}_G})$ from \cite{Sch2}, as well as a description of the IC-stalks of the Zastava spaces from \cite{BFGM}.

\medskip

\sssec{The complex $\widetilde \Omega$}
\label{The complex tilde Omega}
Recall that, given a positive coweight $\check\theta \in \Lambdach_G^{pos}$, we denote by ${}_0Z^{\check\theta}$ the defect-free Zastava space introduced in Subsection \ref{Recollections on Zastava spaces} above, and by $\pi_Z: {}_0Z^{\check\theta} \to X^{\check\theta}$ its structure map. We then define the complex $\widetilde \Omega^{\check\theta}$ on $X^{\check\theta}$ as
$$\widetilde\Omega^{\check\theta} \ \ := \ \ \pi_{Z,!} \, \bigl( \IC_{{}_0Z^{\check\theta}} \bigr) \, .$$

\medskip

\sssec{Stalks of nearby cycles}
In \cite[Thm. 7.1.2]{Sch2} it was shown:

\medskip

\begin{proposition}
\label{stalks}
The $!$-restriction of $\Psi (\IC_{Y^{\check\theta, princ}_G})$ to the stratum of maximal defect
$$X^{\check\theta} \, = \, {}_{\check\theta}Y^{\check\theta, princ}_B \ \ \ \longinto \ \ \ Y^{\check\theta, princ}_B$$
is equal to the complex $\widetilde\Omega^{\check\theta}$.
\end{proposition}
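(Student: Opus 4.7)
The plan is to compute $s^!\Psi(\IC_{Y^{\check\theta,princ}_G})$ by combining three ingredients: (i) a $\BG_m$-contraction principle that identifies $s^!$ with $\pi_!$ on $\BG_m$-monodromic sheaves on $Y^{\check\theta,princ}_B$, (ii) the commutation of the unipotent nearby cycles functor with lower-shriek pushforward along $\pi$, and (iii) the fact that on the $G$-locus one has $Y^{\check\theta,princ}_G \cong {}_0 Z^{\check\theta}\times(\BA^1\setminus\{0\})$ as schemes over $\BA^1$, with $\IC_{Y^{\check\theta,princ}_G}$ being a constant family in the $\BA^1$-direction.  Assuming these three ingredients, the computation reads
$$s^!\,\Psi(\IC_{Y^{\check\theta,princ}_G}) \ = \ \pi_!\,\Psi(\IC_{Y^{\check\theta,princ}_G}) \ = \ \Psi\bigl(\pi_!\,\IC_{Y^{\check\theta,princ}_G}\bigr) \ = \ \Psi\bigl(\widetilde\Omega^{\check\theta}\,\overset{L}{\boxtimes}\,\Qellbar_{\BA^1\setminus\{0\}}[1](\tfrac{1}{2})\bigr) \ = \ \widetilde\Omega^{\check\theta},$$
where the penultimate equality uses the product structure of the $G$-locus together with $(\pi_Z)_!\IC_{{}_0Z^{\check\theta}} = \widetilde\Omega^{\check\theta}$, and the last equality uses that nearby cycles of a constant pullback from $\BA^1\setminus\{0\}$ returns the base (the normalization shifts cancel exactly because of our chosen renormalization of $\Psi$ by $[-1](-\tfrac{1}{2})$).

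\textbf{The contracting $\BG_m$-action.}  For ingredient (i) we construct a $\BG_m$-action on $Y^{\check\theta,princ}_B$ whose fixed locus is exactly the image of the section $s:X^{\check\theta}\to Y^{\check\theta,princ}_B$ and which contracts all of $Y^{\check\theta,princ}_B$ onto $s(X^{\check\theta})$ along the fibers of $\pi$.  This action is obtained by restricting the natural $T$-action on $\Vin_G$ recalled in Section~\ref{Recollections} along a suitable cocharacter $\BG_m\to T$, which we take to be $2\check\rho$ (the sum of the positive coroots).  Using the description $\Vin_{G,B} = \overline{(G/N\times G/N^-)/T}$ from Lemma~\ref{Vinberg lemma} together with the embedding $\overline T\hookrightarrow\Vin_{G,B}$ of Subsection~\ref{The embedding of barT into Vin_G}, the cocharacter $2\check\rho$ contracts the factor $\overline{G/N}$ onto the image of $\overline T$ (and similarly for $\overline{G/N^-}$), and then contracts $\overline T$ stratum-by-stratum onto its $T$-orbits indexed by $\Lambdach_G^{pos}$, which on the level of local models matches the image of the section $s$ onto the maximal-defect stratum.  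One verifies $\pi$-linearity of this action by noting that $X^{\check\theta}$ sits as the quotient of $\Vin_G$ by $N\times N^-$ along the coordinate of $\overline T/T$, and hence is fixed pointwise by the $T$-action.

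\textbf{Putting it together.}  Granted the contracting action and the $\BG_m$-equivariance of $\Psi(\IC_{Y^{\check\theta,princ}_G})$ (which is inherited from equivariance of $\IC_{Y^{\check\theta,princ}_G}$ together with the fact that $2\check\rho$ lands in $\ker(T\to T_{adj})$, so that the cocharacter acts trivially on the $\BA^1$-parameter and hence commutes with $\Psi$), the standard contraction principle gives $s^!F\cong\pi_!F$ for any such equivariant $F$.  Applying this to $F=\Psi(\IC_{Y^{\check\theta,princ}_G})$, followed by the general commutation $\pi_!\Psi=\Psi\pi_!$ for separated maps, together with the explicit computation of $\pi_!\IC_{Y^{\check\theta,princ}_G}$ from the product structure of the $G$-locus, completes the proof.

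\textbf{Main obstacle.}  The subtle step is the verification that the cocharacter $2\check\rho$ (or whatever one chooses) genuinely contracts $Y^{\check\theta,princ}_B$ \emph{globally} onto $s(X^{\check\theta})$, with $\pi$ as the contraction map.  The contraction is easy to see on the open stratum ${}_{\check\theta}Y^{\check\theta}_B$ (where it is tautological) and on the generic stratum ${}_{0,0,0}Y^{\check\theta}_B$ (where the Zastava-space contraction of \cite{FM},\cite{FFKM},\cite{BFGM} applies), but one must match these on the intermediate defect strata of Corollary~\ref{stratification of local models}.  A stratum-by-stratum verification, using that each stratum is a fiber product of (relative) Zastava spaces with an $X^{\check\mu}$-factor and that the $2\check\rho$-action on each of the pieces is contracting onto the maximal-defect locus, should give the result; alternatively, one can bypass the global contraction question by reducing to this computation fiber by fiber over $X^{\check\theta}$ via factorization (Lemma~\ref{factorization in families}) and the section of $\pi_Z$ recalled in Subsection~\ref{Zastava stratification}.
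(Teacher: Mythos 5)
First, a remark on provenance: the paper does not actually prove Proposition \ref{stalks} --- it is imported verbatim from \cite[Thm. 7.1.2]{Sch2} --- so your proposal must be measured against the proof given there. Its overall architecture (a contracting $\BG_m$-action identifying the $!$-restriction to the section with $\pi_!$, followed by a comparison with the pushforward of the IC-sheaf from the $G$-locus) is indeed the right one, and your final bookkeeping using $Y^{\check\theta,princ}_G\cong {}_0Z^{\check\theta}\times(\BA^1\setminus\{0\})$ and the chosen normalization of $\Psi$ is correct.

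However, both load-bearing steps are justified by false statements. First, $2\check\rho$ does not land in $\ker(T\to T_{adj})$: that kernel is $Z_G$, and $\langle\alpha_i,2\check\rho\rangle=2$ for every simple root $\alpha_i$, so under the paper's ``$T$-action on $\Vin_G$'' the cocharacter $2\check\rho$ acts on $T_{adj}^+=\BA^r$ by weight-$2$ scaling of every coordinate; it rescales, rather than fixes, the $\BA^1$-parameter of the principal family. (An action preserving the fibers of $v$ must instead come from $G\times G$, e.g. a dominant cocharacter through $T\times 1$, since $v$ is $G\times G$-invariant; with the action you chose, monodromicity of $\Psi(\IC)$ is still true, but by the standard argument for nearby cycles of families equipped with a $\BG_m$-action rescaling the parameter, not by the triviality you invoke.) Second, and more seriously, there is no ``general commutation $\pi_!\Psi=\Psi\pi_!$ for separated maps'': unipotent nearby cycles commute with proper pushforward, but for a non-proper map there is only a base-change morphism, and it genuinely fails to be an isomorphism when supports escape to infinity as $t\to 0$ --- for instance, for the (shifted) constant sheaf on the hyperbola $\{xt=1\}\subset\BA^2_{x,t}$ pushed along $(x,t)\mapsto t$, one side vanishes and the other does not. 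The structure map $\pi$ here is affine and far from proper, so this step --- which is exactly where the real content of the cited proof lies, and which there requires a compatibility of nearby cycles with hyperbolic localization on the total space of the family rather than merely a contraction of the special fiber --- cannot be dispatched as a formal generality. As written, the argument therefore has a genuine gap at its central step, even though the intended route is the correct one.
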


\medskip

\sssec{The complexes $\Omega^{\check\theta}$ and $U^{\check\theta}$ from \cite{BG2}}
In \cite{BG1}, \cite{BG2}, and \cite{BFGM} certain complexes $\Omega^{\check\theta}$ and $U^{\check\theta}$ on $X^{\check\theta}$ were introduced and studied. In the present article we only recall the description of the complex $\Omega^{\check\theta}$ on the level of the Grothendieck group from \cite{BG2}, and a combinatorial description of the complex $U^{\check\theta}$ from \cite{BFGM}. We refer the interested reader to the above sources for the definitions of these complexes and for how they arise in the geometric Langlands program; in the present work they will only appear in the following two ways: First, a description of the complex $\widetilde \Omega^{\check\theta}$ in the Grothendieck group in terms of $\Omega^{\check\theta}$ and $U^{\check\theta}$ can be extracted from \cite{BG2}, as we explain below, and in total we hence obtain a formula for $\widetilde \Omega^{\check\theta}$ in the Grothendieck group which will be used in our proof of Theorem \ref{main theorem for local models} above. Second, the stalks of the IC-sheaf of the Zastava spaces can be described in terms of the complex $U^{\check\theta}$; we will again only need the combinatorial description of $U^{\check\theta}$ mentioned above and given below.

\medskip

To state the descriptions of $\Omega^{\check\theta}$ and $U^{\check\theta}$ we use the notation from Subsection \ref{PLO} above. In particular we will invoke, for any Kostant partition
$$\CK: \ \ \thetacheck \ = \ \sum_{\betacheck \in \check R^+} n_{\betacheck} \betacheck$$
of a positive coweight $\thetacheck \in \Lambdach_G^{pos}$, the partially symmetrized power
$$X^{\CK} \ \ := \ \ \prod_{\betacheck \in \check R^+} X^{(n_{\betacheck})}$$
and the natural map
$$i_{\CK}: \ \ X^{\CK} \ \longto \ X^{\thetacheck}$$
from Subsection \ref{PLO} above.
The descriptions from \cite[Sec. 3.3]{BG2} and from \cite[Thm. 4.5]{BFGM} then are:

\medskip

\begin{lemma}
\label{Omega and U}
\begin{itemize}
\item[]
\item[]
\item[(a)] The complex $U^{\check\theta}$ on $X^{\check\theta}$ decomposes as a direct sum
$$U^{\check\theta} \ \ = \ \ \bigoplus_{\CK \, \in \, \Kost(\check\theta)} i_{\CK,*} \, \Qellbar_{X^{\CK}}[0](0) \, .$$
\item[]
\item[(b)] In the Grothendieck group on $X^{\check\theta}$ the complex $\Omega^{\check\theta}$ agrees with the direct sum
$$\Omega^{\check\theta} \ \ = \ \ \bigoplus_{\CK \, \in \, \Kost(\check\theta)} i_{\CK,*} \, \bigl( \, \underset{\check\beta}{\boxtimes} \, \Lambda^{(n_{\check\beta})}(\Qellbar_X) \, [n_{\check\beta}](n_{\check\beta})\bigr) \, .$$
\end{itemize}
\end{lemma}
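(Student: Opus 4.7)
The plan is to extract both formulas essentially as direct translations from the cited references, recording how the combinatorics of the Langlands dual Kostant partition function enters each one. The common combinatorial input is that the set $\Kost(\check\theta)$ indexes a natural PBW-style basis of the weight-$\check\theta$ summand of $U(\check{\mathfrak n})$: one symbol $\betacheck^{n_\betacheck}$ per positive coroot $\betacheck \in \check R^+$. The factor $X^{(n_\betacheck)}$ arises geometrically because $n_\betacheck$ indistinguishable coroots $\betacheck$ naturally parametrize points of $X^{(n_\betacheck)}$, and $i_\CK$ is the map remembering only the underlying $\Lambdach_G^{pos}$-valued divisor.

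For part (a), I would first recall from \cite{BFGM} that $U^{\check\theta}$ is built as a Beilinson-Drinfeld-type assembly indexed by $\Lambdach_G^{pos}$-valued divisors, whose $!$-fibers at $\sum_k \check\theta_k x_k$ recover the Kostant partition function values at each $\check\theta_k$. The stated decomposition is then tantamount to the Poincar\'e--Birkhoff--Witt identification, coroot by coroot, and is precisely the content of \cite[Thm.~4.5]{BFGM} after transporting notation. The only verifications are: (i) that our indexing set $\Kost(\check\theta)$ matches the one used there, and (ii) that the constant sheaf $\Qellbar_{X^{\CK}}[0](0)$ carries the correct shift and twist given our IC-normalization from Subsection~\ref{Conventions and notation} above; both are straightforward.

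For part (b), I would use the parallel description of $\Omega^{\check\theta}$ in \cite[Sec.~3.3]{BG2}. In that reference $\Omega^{\check\theta}$ arises from the same coroot-indexed stratification but with an additional sign (skew-symmetrization) reflecting the passage from $U(\check{\mathfrak n})$ to the Chevalley--Koszul complex of $\check{\mathfrak n}$. Accordingly, each factor $\Qellbar_{X^{(n_\betacheck)}}$ that appears in $U^{\check\theta}$ is replaced by the external exterior power $\Lambda^{(n_\betacheck)}(\Qellbar_X)[n_\betacheck](n_\betacheck)$; combining over $\betacheck \in \check R^+$ via $i_\CK$ and summing over $\CK \in \Kost(\check\theta)$ yields the displayed formula. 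Because these exterior-power summands organize into a non-split filtration on the actual sheaf $\Omega^{\check\theta}$, the identity is only asserted in the Grothendieck group; this is exactly what \cite[Sec.~3.3]{BG2} provides.

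The main obstacle is not conceptual but bookkeeping: reconciling the conventions for cohomological shifts, Tate twists, and the orientation of the maps $i_\CK$ between our setup and those of \cite{BFGM} and \cite{BG2}, and confirming that our normalization in Subsection~\ref{PLO} of the external exterior powers $\Lambda^{(n)}(\Qellbar_X)$ matches the one implicit in \cite[Sec.~3.3]{BG2}. Once these identifications are fixed, both (a) and (b) are direct transcriptions of the cited results.
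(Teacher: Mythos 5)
Your proposal matches the paper exactly: the lemma is stated there purely as a recollection, with no proof beyond the citations to \cite[Thm.~4.5]{BFGM} for $U^{\check\theta}$ and \cite[Sec.~3.3]{BG2} for $\Omega^{\check\theta}$, which is precisely what you do (your PBW/Koszul gloss and the bookkeeping caveats are accurate but not required). Nothing further is needed.
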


\bigskip

\sssec{Description of $\widetilde \Omega^{\check\theta}$ in the Grothendieck group}
We can now recall the aforementioned description of the complex $\widetilde \Omega^{\check\theta}$ in the Grothendieck group, which follows from Corollary 4.5 of \cite{BG2}:

\medskip

\begin{lemma}
\label{tilde Omega lemma}
In the Grothendieck group on $X^{\check\theta}$ we have:
$$\widetilde \Omega^{\check\theta} \ \ = \ \ \sum_{\check\theta_1 + \check\theta_2 = \check\theta} add_* \Bigl( \Omega^{\check\theta_1} \; \boxtimes \ U^{\check\theta_2} \Bigr)$$
Here the sum runs over all pairs of positive coweights $(\check\theta_1, \check\theta_2)$ satisfying $\check\theta_1 + \check\theta_2 = \check\theta$.
\end{lemma}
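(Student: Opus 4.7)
The plan is to deduce this identity from Corollary~4.5 of \cite{BG2}, which establishes the analogous formula in the setting of Drinfeld's compactification $\barBun_B$; the argument transfers to the Zastava spaces since the relevant geometric data --- a structure map to $X^{\check\theta}$, a defect stratification indexed by $\Lambdach_G^{pos}$, and stalks of the IC-sheaf controlled by the complexes $U^{\check\theta}$ --- are parallel in the two settings.

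Concretely, recall from Subsection~\ref{Zastava stratification} that the stratum of defect $\check\theta_2$ in $Z^{\check\theta}$ is
$${}_{\check\theta_2}Z^{\check\theta} \ \cong \ X^{\check\theta_2} \times {}_0Z^{\check\theta_1} \qquad \text{for } \ \check\theta_1 \ = \ \check\theta - \check\theta_2,$$
and that the restriction of $\pi_Z: Z^{\check\theta} \to X^{\check\theta}$ to this stratum factors through the addition map as
$$X^{\check\theta_2} \times {}_0Z^{\check\theta_1} \ \xrightarrow{\mathrm{id} \times \pi_Z} \ X^{\check\theta_2} \times X^{\check\theta_1} \ \xrightarrow{\mathrm{add}} \ X^{\check\theta}.$$
The sheaf-theoretic input is the description from \cite{BFGM} of the restriction of $\IC_{Z^{\check\theta}}$ to this stratum, which is identified, up to an appropriate shift and Tate twist, with $U^{\check\theta_2} \boxtimes \IC_{{}_0Z^{\check\theta_1}}$ --- the same combinatorics that underlies Lemma~\ref{Omega and U}(a).

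The argument then proceeds in two steps. First, apply the K-theoretic open--closed identity to the defect stratification of $Z^{\check\theta}$: this expresses $[\IC_{Z^{\check\theta}}]$ as a sum over $\check\theta_2 \in \Lambdach_G^{pos}$ of classes of the form $[(j_{\check\theta_2})_!\,(U^{\check\theta_2} \boxtimes \IC_{{}_0Z^{\check\theta_1}})]$, where $j_{\check\theta_2}$ denotes the stratum inclusion. Second, push this identity forward along $\pi_Z$ and invoke proper base change for the cartesian square induced by the factorization above; each summand on the right then becomes $\mathrm{add}_*(U^{\check\theta_2} \boxtimes \widetilde\Omega^{\check\theta_1})$, where we use that $\mathrm{add}$ is finite so that $\mathrm{add}_! = \mathrm{add}_*$. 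Matching the left-hand side with the definition of $\Omega^{\check\theta}$ in \cite{BG2}, and isolating the contribution of the open stratum $\check\theta_2 = 0$ from the rest, produces --- after a brief rearrangement of the recursive identity thereby obtained --- the claimed closed-form expression for $\widetilde\Omega^{\check\theta}$.

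The principal technical obstacle is the careful bookkeeping of conventions: one must consistently track the choice of $*$- versus $!$-stalks, the perverse normalization of the IC-sheaves, and the accompanying Tate twists, in order to identify the resulting direct-sum expression precisely with the right-hand side of the lemma --- rather than with its Verdier dual or an apparently inverted recursion. Once these conventions are pinned down, the identity is a direct translation of \cite[Cor.~4.5]{BG2} to the Zastava setting via the stratification and stalks data summarized above.
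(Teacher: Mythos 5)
Your argument has a genuine gap at the step ``matching the left-hand side with the definition of $\Omega^{\check\theta}$ in \cite{BG2}''. After pushing your open--closed decomposition forward along $\pi_Z$, the left-hand side is $\pi_{Z,!}[\IC_{Z^{\check\theta}}]$, the pushforward of the IC-sheaf of the \emph{full} Zastava space, and this class is not $\Omega^{\check\theta}$. Already for $\check\theta=\check\alpha_i$ a simple coroot, $Z^{\check\alpha_i}$ is smooth and $\pi_Z$ is an $\BA^1$-fibration over $X=X^{\check\alpha_i}$, so $\pi_{Z,!}[\IC_{Z^{\check\alpha_i}}]=[\Qellbar_X]$, whereas Lemma \ref{Omega and U}(b) gives $\Omega^{\check\alpha_i}=[\Qellbar_X[1](1)]=-[\Qellbar_X(1)]$. (The lemma itself is consistent here: ${}_0Z^{\check\alpha_i}\to X$ is a $\BG_m$-fibration, so $\widetilde\Omega^{\check\alpha_i}=[\Qellbar_X]-[\Qellbar_X(1)]=U^{\check\alpha_i}+\Omega^{\check\alpha_i}$; it is your identification of the pushforward of $\IC_{Z^{\check\theta}}$ that fails.) Without an independent identification of $\pi_{Z,!}[\IC_{Z^{\check\theta}}]$, your recursion merely relates one unknown class to a convolution of stalk classes with $\widetilde\Omega$; solving it yields $\widetilde\Omega$ as a convolution \emph{inverse} of the stalk classes applied to that unknown, and the ``brief rearrangement'' turning this into the claimed product formula $\sum \mathrm{add}_*(\Omega^{\check\theta_1}\boxtimes U^{\check\theta_2})$ does not follow formally --- with your identification of the left-hand side it is in fact false, and in general it would require exactly the kind of identity (essentially \cite[Cor.~4.5]{BG2} itself) that you set out to reprove.

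There is also a secondary issue that is not mere bookkeeping of shifts and twists. The decomposition compatible with $\pi_{Z,!}$ is $[\IC_{Z^{\check\theta}}]=\sum[(j_{\check\theta_2})_!(\IC|^{*}_{\text{stratum}})]$, which needs the $*$-restrictions to the strata; but in the normalization used in the paper, $U^{\check\theta}$ describes the $!$-restriction (Lemma \ref{IC of Zastava}), and the alternative decomposition via $j_*$-extensions of $!$-restrictions cannot be pushed forward term by term under $\pi_{Z,!}$. The $*$-stalks differ from $U^{\check\theta_2}$ summand-by-summand over Kostant partitions, not by one overall shift and Tate twist, so writing them as ``$U^{\check\theta_2}\boxtimes\IC$ up to shift and twist'' already builds in an error that propagates into the final formula. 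For comparison, the paper gives no independent derivation at all: Lemma \ref{tilde Omega lemma} is quoted as a consequence of \cite[Cor.~4.5]{BG2}. If you want a genuine proof along your lines, the missing ingredient is a separate computation of $\pi_{Z,!}[\IC_{Z^{\check\theta}}]$ (or, dually, of the relevant pushforward appearing in \cite{BG2}) in terms of $U^{\check\theta}$ and $\Omega^{\check\theta}$; the stratification argument alone does not produce it.
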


\medskip

\sssec{IC-stalks of Zastava space}

We will also need the following result about the $!$-stalks of the IC-sheaf of the Zastava space $Z^{\check\theta}$, established in \cite[Sec. 5]{BFGM}:

\medskip

\begin{lemma}
\label{IC of Zastava}
The $!$-restriction of the IC-sheaf $\IC_{Z^{\check\theta}}$ to the stratum of maximal defect ${}_{\check\theta}Z^{\check\theta} = X^{\check\theta}$ is isomorphic to the complex $U^{\check\theta}$.
\end{lemma}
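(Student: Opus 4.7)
The strategy is to combine the factorization structure on Zastava spaces with an affine Grassmannian computation for the stalk at a single point. The formation of $\IC_{Z^{\check\theta}}$ is compatible with factorization in the sense that the pullback of $\IC_{Z^{\check\theta}}$ along the \'etale map
$$Z^{\check\theta_1} \overset{\circ}{\times} Z^{\check\theta_2} \ \longto \ Z^{\check\theta}$$
is canonically $\IC_{Z^{\check\theta_1}} \overset{\circ}{\boxtimes} \IC_{Z^{\check\theta_2}}$. Consequently, the $!$-restriction to the deepest stratum $X^{\check\theta}$ factorizes in the same manner, and over a divisor $\sum_k \check\theta_k \cdot x_k$ with pairwise distinct $x_k$, the $!$-stalk is determined by the $!$-stalks of $\IC_{Z^{\check\theta_k}}$ at the one-point divisors $\check\theta_k \cdot x_k$. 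It therefore suffices to compute these ``point stalks'' for each positive coweight $\check\theta$ and then reassemble them via factorization.

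Second, I would identify the fiber $\pi_Z^{-1}(\check\theta \cdot x) \subset Z^{\check\theta}$ with a Mirkovi\'c--Vilonen-type variety inside the affine Grassmannian $\Gr_G$ at the point $x$, namely the transverse slice $S^{-,0} \cap T^{\check\theta}$ of opposite semi-infinite orbits for $N^-$ and $N$. Under the standard identification between Zastava space and the appropriate locus in the Beilinson--Drinfeld Grassmannian, the $!$-restriction of $\IC_{Z^{\check\theta}}$ to this fiber is computed by the Satake IC-sheaf (or rather its restriction to the relevant MV slice). By the Mirkovi\'c--Vilonen theorem this slice is equidimensional with top-dimensional irreducible components (the MV cycles) canonically in bijection with $\Kost(\check\theta)$, and the $!$-cohomology of the Satake IC-sheaf on it is concentrated in top degree of dimension $|\Kost(\check\theta)|$. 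This gives the one-point stalk as a trivial vector space of the correct rank, concentrated in the appropriate cohomological degree and weight.

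Third, I would assemble these point stalks using the factorization conclusion of the first step. Factorization forces the restriction of $\IC_{Z^{\check\theta}}$ to $X^{\check\theta}$ to be a direct sum $\bigoplus_{\CK} i_{\CK,*} L_{\CK}$, where $\CK$ runs over $\Kost(\check\theta)$ and each $L_{\CK}$ is a priori a local system on $X^{\CK}$ whose restriction to the disjoint locus is pinned down by factorization and the point-stalk computation. The $X$-translation equivariance of Zastava (coming from the $X$-invariant construction of $\IC_{Z^{\check\theta}}$) then implies that each $L_{\CK}$ is constant along the curve-direction, and the simple-root case (where the fiber is a single point) forces the constant to be $\Qellbar$ in cohomological degree $0$ and Tate twist $0$. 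Putting these together yields precisely $U^{\check\theta} = \bigoplus_{\CK \in \Kost(\check\theta)} i_{\CK,*} \Qellbar_{X^{\CK}}[0](0)$.

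\textbf{Main obstacle.} The substantive step is the second one: identifying the fiber of $\pi_Z$ at a one-point divisor with an MV slice and carrying out the Satake-type computation that translates the Kostant partition function of $\check G$ into the dimensions of top-dimensional components. Once this geometric input is in hand, the factorization and equivariance arguments that promote the point stalk to the full formula for $U^{\check\theta}$ on $X^{\check\theta}$ are essentially formal, but this geometric input is where the Langlands dual side enters in a non-trivial way.
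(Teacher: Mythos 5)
The first thing to say is that the paper does not actually prove this lemma: it is quoted directly from \cite{BFGM}, Section 5, so there is no in-text argument to compare against. Your outline --- factorization reduces everything to the costalk at a one-point divisor $\check\theta\cdot x$, and that costalk is computed inside the affine Grassmannian using Mirkovi\'c--Vilonen dimension estimates --- is indeed the strategy of the cited proof, so you have identified the right route and correctly located where the Langlands dual combinatorics enters.

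That said, your execution of the key step has a genuine gap. The costalk of $\IC_{Z^{\check\theta}}$ at the single point $\check\theta\cdot x$ of the maximal-defect section is not the same object as the compactly supported ($!$-) cohomology of $\IC_{Z^{\check\theta}}$ on the entire central fiber $F=\pi_Z^{-1}(\check\theta\cdot x)$, yet your second step silently identifies the two. The identification is true, but only because a dominant cocharacter of $T$ acts on $Z^{\check\theta}$ contracting it onto the section $X^{\check\theta}\hookrightarrow Z^{\check\theta}$, so that the contraction principle (hyperbolic localization) converts the $!$-stalk at the fixed point into $\Gamma_c$ of the $*$-restriction to the attracting set, which is all of $F$. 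This contraction is the engine of the computation in \cite{BFGM}, and without it your step 2 does not start. Two smaller but real issues: the complex $\IC_{Z^{\check\theta}}\big|^*_F$ is not literally ``the Satake IC-sheaf restricted to an MV slice''; what is actually used is the stratification of $F$ by the central fibers of the defect strata (open intersections of opposite semi-infinite orbits), the MV dimension estimates for these to obtain parity vanishing, and a factorization/Euler-characteristic argument identifying the resulting rank with $\dim U(\check{\Fn})[\check\theta]=|\Kost(\check\theta)|$ (the bijection of components with $\Kost(\check\theta)$ is not canonical; only the count matters). And in step 3 the decomposition $\bigoplus_{\CK}i_{\CK,*}L_{\CK}$ is asserted rather than derived: knowing all pointwise $!$-stalks together with factorization does not determine a constructible complex, since extension data are invisible to stalks; it is precisely the purity/parity statement above that forces the semisimple, untwisted form $U^{\check\theta}=\bigoplus_{\CK}i_{\CK,*}\Qellbar_{X^{\CK}}[0](0)$.
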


\bigskip

\ssec{Reduction to maximal defect locus}
\label{Reduction to maximal defect locus}

For a positive coweight $\check\theta = \sum_{i \in \CI} n_i \check\alpha_i \in \Lambdach_G^{pos}$ we define the \textit{length} of $\check\theta$ as the integer
$$|\check\theta| \ := \ \sum_{i \in \CI} n_i \, .$$
We will prove Theorem \ref{main theorem for local models} by induction of the length of the positive coweight $\check\theta$ appearing in its formulation. We now begin with proving the induction step. Thus we want to show that Theorem \ref{main theorem for local models} holds for the positive coweight $\check\theta$, and may assume that it holds for all positive coweights of smaller length. In the current subsection we use the induction hypothesis to reduce the assertion of Theorem \ref{main theorem for local models} to the existence of an isomorphism of complexes on the stratum of maximal defect $X^{\check\theta} = {}_{\check\theta}Y^{\check\theta,princ}_B$.

\sssec{Separation according to loci of support}

We first break up the existence of the isomorphism asserted in the theorem into two parts. To do so, we abbreviate
$$R_{\check\theta} \ \ := \ \ \bigoplus_{\check\theta_1 + \check\mu + \check\theta_2 = \check\theta} \bar{f}_{\check\theta_1, \check\mu, \check\theta_2,*} \ \Bigl( \IC_{Z^{-, \check\theta_1}_{\Bun_T}} \underset{ \ \Bun_T}{\boxtimes} \Bigl( \CF_{\check\mu} \ \boxtimes \ \IC_{Z^{\check\theta_2}} \Bigr) \Bigr)$$
and make the following basic observation:

\medskip

\begin{lemma}
\label{semisimplicity}
The perverse sheaf $R_{\check\theta}$ is semisimple. The perverse sheaf $\gr \, \Psi (\IC_{Y^{\check\theta, princ}_G})$ becomes semisimple after forgetting its Weil structure.
\end{lemma}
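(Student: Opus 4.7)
The plan is to prove the two assertions separately, as they are of quite different nature.

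For the first assertion, I would reduce to showing semisimplicity of each summand in the direct sum defining $R_{\check\theta}$. The map $\bar f_{\check\theta_1, \check\mu, \check\theta_2}$ is finite by Corollary \ref{stratification of local models}, so the pushforward $\bar f_{\check\theta_1, \check\mu, \check\theta_2, *}$ is perverse $t$-exact and preserves semisimplicity. It thus suffices to show that the sheaf $\IC_{Z^{-, \check\theta_1}_{\Bun_T}} \boxtimes_{\Bun_T}(\CF_{\check\mu} \boxtimes \IC_{Z^{\check\theta_2}})$ on the fiber product is semisimple as a Weil perverse sheaf. The IC-sheaves $\IC_{Z^{-,\check\theta_1}_{\Bun_T}}$ and $\IC_{Z^{\check\theta_2}}$ are Weil-simple since the underlying Zastava spaces are irreducible. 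The Picard-Lefschetz oscillator $\CF_{\check\mu}$ is semisimple by its very construction in Subsection \ref{PLO}: it is a direct sum over Kostant partitions of $i_{\CK, *}$-pushforwards, the $i_\CK$ being finite, of external products of the sheaves $\CP_n$; each $\CP_n$ in turn decomposes as a direct sum of Weil-simple IC-sheaves indexed by the Lefschetz-$\sl_2$-weight, as follows from Lemma \ref{external exterior powers}(b) and the discussion of Subsection \ref{PLOs for SL_2}. The operation $\boxtimes_{\Bun_T}$ applied to semisimple inputs yields a semisimple output, which concludes the argument for the first assertion.

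For the second assertion, I would invoke Gabber's theorem (Proposition \ref{Gabber's theorem}). The sheaf $\IC_{Y^{\check\theta, princ}_G}$ is pure of weight $0$ because the locus $Y^{\check\theta, princ}_G$ is smooth (cf. Remark \ref{G-locus of local model} together with the smoothness of ${}_0Z^{\check\theta}$ recalled in Subsection \ref{Recollections on Zastava spaces}). Gabber's theorem then asserts that each graded piece $\gr_i \Psi(\IC_{Y^{\check\theta,princ}_G})$ of the weight-monodromy filtration is pure of weight $i$. By the Beilinson-Bernstein-Deligne decomposition theorem, any pure perverse sheaf becomes semisimple after forgetting its Weil structure. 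Summing over $i$, the full associated graded $\gr \, \Psi(\IC_{Y^{\check\theta,princ}_G})$ is then geometrically semisimple, as claimed.

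The main point requiring care is the verification, for the first assertion, that $\CF_{\check\mu}$ decomposes into a direct sum of Weil-simple perverse sheaves and not merely that it is geometrically semisimple; this is why making the Lefschetz-$\sl_2$-isotypic decomposition of $\CP_n$ explicit is essential. The second assertion is essentially formal once the purity of $\IC_{Y^{\check\theta, princ}_G}$ is noted, and the weaker form of semisimplicity stated there is precisely what Gabber's theorem combined with BBD yields: one cannot in general upgrade to Weil-semisimplicity, which is why the lemma distinguishes the two cases.
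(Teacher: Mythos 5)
Your proposal is correct and follows essentially the same route as the paper: Gabber's theorem plus the BBD decomposition theorem for the geometric semisimplicity of $\gr\,\Psi(\IC_{Y^{\check\theta,princ}_G})$, and finiteness of the maps $\bar f_{\check\theta_1,\check\mu,\check\theta_2}$ and $i_{\CK}$ together with the semisimplicity of the $\CP_n$ for $R_{\check\theta}$. (Only a cosmetic imprecision: the Lefschetz-$\sl_2$-weight decomposition of $\CP_n$ has semisimple rather than simple pieces --- the simple summands are indexed by two-row partitions via Schur--Weyl --- but semisimplicity is all your argument needs.)
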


\medskip

\begin{proof}
For the associated graded $\gr \, \Psi (\IC_{Y^{\check\theta, princ}_G})$ this is a consequence of Gabber's theorem, Proposition \ref{Gabber's theorem} above, together with the decomposition theorem from \cite{BBD} for pure perverse sheaves.
For the perverse sheaf $R_{\check\theta}$ the finiteness of the compactified maps $\bar{f}_{\check\theta_1, \check\mu, \check\theta_2}$ and the decomposition theorem from \cite{BBD} together reduce the assertion of semisimplicity to that of the perverse sheaves $\CF_{\check\mu}$; for the latter it follows from the semisimplicity of the Picard-Lefschetz oscillators discussed in Subsection \ref{PLOs for SL_2} above and the finiteness of the addition maps
$$i_{\CK}: \ \ X^{\CK} \ \longto \ X^{\thetacheck} \, .$$
\end{proof}

\medskip

By Lemma \ref{semisimplicity} above we may split each of the two perverse sheaves whose semisimplicity it asserts into two summands
$$\gr \, \Psi (\IC_{Y^{\check\theta, princ}_G}) = \Bigl( \gr \, \Psi (\IC_{Y^{\check\theta, princ}_G}) \Bigr)_{\text{on} \, {}_{\check\theta}Y^{\check\theta, princ}_B} \bigoplus \Bigl( \gr \, \Psi (\IC_{Y^{\check\theta, princ}_G}) \Bigr)_{\text{not} \, \text{on} \, {}_{\check\theta}Y^{\check\theta, princ}_B}$$
$$R_{\check\theta} \ \ = \ \ \bigl( R_{\check\theta} \bigr)_{\text{on} \, {}_{\check\theta}Y^{\check\theta, princ}_B} \bigoplus \bigl( R_{\check\theta} \bigr)_{\text{not} \, \text{on} \, {}_{\check\theta}Y^{\check\theta, princ}_B}$$
where all simple constituents of the first summand are supported on the locus of maximal defect $X^{\check\theta} = {}_{\check\theta}Y^{\check\theta,princ}_B$ and where all simple constituents of the second summand are not supported on this locus.
By construction these direct sum decompositions are compatible with the action of the Lefschetz-$\sl_2$. We will prove the induction step by separately constructing two isomorphisms:

\medskip

(A) On the locus of maximal defect:
$$\Bigl( \gr \, \Psi (\IC_{Y^{\check\theta, princ}_G}) \Bigr)_{\text{on} \, {}_{\check\theta}Y^{\check\theta, princ}_B} \ \cong \ \bigl( R_{\check\theta} \bigr)_{\text{on} \, {}_{\check\theta}Y^{\check\theta, princ}_B}$$

(B) Away from the locus of maximal defect:
$$\Bigl( \gr \, \Psi (\IC_{Y^{\check\theta, princ}_G}) \Bigr)_{\text{not} \, \text{on} \, {}_{\check\theta}Y^{\check\theta, princ}_B} \ \cong \ \bigl( R_{\check\theta} \bigr)_{\text{not} \, \text{on} \, {}_{\check\theta}Y^{\check\theta, princ}_B}$$

\medskip

\noindent Both isomorphisms will respect the action of the Lefschetz-$\sl_2$. The existence of the isomorphism away from the locus of maximal defect follows easily from the induction hypothesis; the actual work goes in to the existence of the isomorphism on the locus of maximal defect.

\medskip

\sssec{The isomorphism (B)}
\label{The isomorphism (B)}

We now record:

\medskip

\begin{lemma}
There exists an isomorphism of perverse sheaves
$$\Bigl( \gr \, \Psi (\IC_{Y^{\check\theta, princ}_G}) \Bigr)_{\text{not} \, \text{on} \, {}_{\check\theta}Y^{\check\theta, princ}_B} \ \cong \ \bigl( R_{\check\theta} \bigr)_{\text{not} \, \text{on} \, {}_{\check\theta}Y^{\check\theta, princ}_B}$$
which is compatible with the action of the Lefschetz-$\sl_2$.
\end{lemma}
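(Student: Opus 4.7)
The plan is to exploit the factorization structure on both sides to reduce the claim to smaller positive coweights, and then invoke the induction hypothesis on $|\check\theta|$. The starting point is Proposition \ref{factorization of gr Psi}, which provides the required factorization of $\gr \, \Psi(\IC_{Y^{\check\theta, princ}_G})$; the parallel factorization must be established for $R_{\check\theta}$.

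First I would verify that $R_{\check\theta}$ factorizes compatibly with the Lefschetz-$\sl_2$ action, i.e., that for any decomposition $\check\theta = \check\theta_1 + \check\theta_2$ with $\check\theta_1, \check\theta_2 \in \Lambdach_G^{pos}$ both nonzero, one has a canonical isomorphism
$$R_{\check\theta} \, \big|^*_{Y^{\check\theta_1, princ}_B \stackrel{\circ}{\times} Y^{\check\theta_2, princ}_B} \ \ \cong \ \ R_{\check\theta_1} \, \overset{\circ}{\boxtimes} \, R_{\check\theta_2} \, .$$
This requires three ingredients: the Picard-Lefschetz oscillators $\CF_{\check\mu}$ factorize (which follows from Lemma \ref{external exterior powers}(c) applied to the ordinary Picard-Lefschetz oscillators $\CP_n$, combined with the additivity of Kostant partitions under divisor addition and the compatibility of the finite maps $i_{\CK}$ with this addition); the IC-sheaves of the Zastava spaces $Z^{\check\theta_2}$ and $Z^{-, \check\theta_1}_{\Bun_T}$ factorize, as recalled from \cite{BFGM}; and the compactified strata maps $\bar{f}_{\check\theta_1, \check\mu, \check\theta_2}$ are themselves compatible with divisor addition. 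The Lefschetz-$\sl_2$-equivariance is automatic, since the $\sl_2$-action on $\CF_{\check\mu}$ is assembled factor-by-factor from the $\sl_2$-action on the ordinary $\CP_n$.

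Combining this factorization of $R_{\check\theta}$ with Proposition \ref{factorization of gr Psi} and the induction hypothesis $\gr \, \Psi(\IC_{Y^{\check\theta_i, princ}_G}) \cong R_{\check\theta_i}$ --- which applies since $|\check\theta_i| < |\check\theta|$ --- yields a canonical $\sl_2$-equivariant isomorphism between $\gr \, \Psi(\IC_{Y^{\check\theta, princ}_G})$ and $R_{\check\theta}$ on every disjoint locus $Y^{\check\theta_1, princ}_B \stackrel{\circ}{\times} Y^{\check\theta_2, princ}_B$. As $(\check\theta_1, \check\theta_2)$ varies these local isomorphisms are mutually compatible, and together they produce an isomorphism over the open substack $U \subset Y^{\check\theta, princ}_B$ obtained by removing the $\pi$-preimage of the total diagonal $\Delta_X \cong X \subset X^{\check\theta}$.

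Finally I would globalize via semisimplicity. Any simple summand on either side whose support is not contained in the maximal defect stratum $X^{\check\theta} = {}_{\check\theta}Y^{\check\theta, princ}_B$ must meet some stratum ${}_{\check\theta_1, \check\mu, \check\theta_2}Y^{\check\theta}_B$ with $(\check\theta_1, \check\theta_2) \neq (0, 0)$ in a dense open subset, and the factorization cover $U$ is dense in every such stratum since the map from the stratum to $X^{\check\theta}$ is surjective and hence hits the non-diagonal locus generically. Thus any ``not-on-max-defect'' simple summand restricts nontrivially to $U$, and by Lemma \ref{semisimplicity} the simple summands of the two sides are matched by the isomorphism on $U$. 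This yields the desired global isomorphism, and the Lefschetz-$\sl_2$-equivariance is inherited from $U$. The main obstacle of the argument is the first step --- verifying that the combinatorial construction of $\CF_{\check\mu}$ as a Kostant-partition-indexed direct sum is compatible with divisor factorization --- since every subsequent step is then a formal consequence of semisimplicity and the induction hypothesis.
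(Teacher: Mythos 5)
Your argument has a genuine gap at the globalization step. The open set you control by factorization is $U = Y^{\check\theta, princ}_B \setminus \pi^{-1}(\Delta_X)$: the images of the \'etale factorization maps $Y^{\check\theta_1, princ}_B \stackrel{\circ}{\times} Y^{\check\theta_2, princ}_B \to Y^{\check\theta, princ}_B$ (over all nontrivial decompositions $\check\theta = \check\theta_1 + \check\theta_2$) cover exactly the locus where the divisor $\pi(y) \in X^{\check\theta}$ is \emph{not} concentrated at a single point. But the splitting you are trying to match is defined by support on the maximal-defect stratum ${}_{\check\theta}Y^{\check\theta, princ}_B \cong X^{\check\theta}$, and the complement of $U$, namely $\pi^{-1}(\Delta_X)$, is neither contained in nor contains that stratum. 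So ``not supported on the maximal-defect stratum'' does not imply ``not supported on $\pi^{-1}(\Delta_X)$'': a priori, $\gr \, \Psi(\IC_{Y^{\check\theta, princ}_G})$ could have a simple constituent whose support is an irreducible closed subset contained in $\pi^{-1}(\Delta_X)$ (for instance, the closure of the intersection of $\pi^{-1}(\Delta_X)$ with a lower-defect stratum) without being contained in the maximal-defect stratum. Such a constituent restricts to zero on $U$ and is invisible to your matching. Your justification --- that the stratum containing the generic point of the support surjects onto $X^{\check\theta}$ and hence meets the non-diagonal locus --- only shows that the \emph{stratum} meets $U$, not that the \emph{support of the constituent} does. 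Ruling out such ``diagonal, lower-defect'' constituents is exactly the kind of statement that requires an argument in this paper (indeed the whole point for general $G$ is that diagonal-supported constituents do occur, inside the maximal-defect part), so the step cannot be waved through by semisimplicity alone; in effect you would only obtain the isomorphism after intermediate extension from $U$, which is weaker than the lemma.

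The paper's proof takes a different and more economical route that avoids this issue entirely: the induction hypothesis is fed through the global statement (the ``interplay principle''), giving the validity of Theorem \ref{main theorem for nearby cycles} on the loci of defect $< \check\theta$ and hence of Theorem \ref{main theorem for local models} on the open substack ${}_{< \check\theta}Y^{\check\theta, princ}_B$, i.e.\ on the \emph{entire} complement of the maximal-defect stratum, including the part lying over $\Delta_X$ (this works because, \'etale-locally along a defect-$\check\mu$ stratum with $\check\mu < \check\theta$, the geometry is governed by the local model $Y^{\check\mu}$ with $|\check\mu| < |\check\theta|$, with no restriction on the position of the divisor). Relative to this open set, the identity ``the not-on-maximal-defect part equals the intermediate extension of its restriction'' holds tautologically by the definition of the splitting, and applying intermediate extension to the restricted isomorphism finishes the proof. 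The factorization-plus-induction mechanism you propose is genuinely used in the paper, but for a different piece: the isomorphism (A) away from the diagonal inside the maximal-defect locus (Subsection \ref{Construction of the isomorphism (A)}), where the complement being removed really is the diagonal. If you want to salvage your approach for (B), you would need an additional argument excluding simple constituents of $\gr \, \Psi$ supported in $\pi^{-1}(\Delta_X)$ but not in the maximal-defect stratum --- which essentially forces you back to the interplay-principle argument the paper uses.
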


\medskip

\begin{proof}
As in Subsection \ref{The principal degeneration of the local model} above, the induction hypothesis implies the validity of Theorem \ref{main theorem for nearby cycles} after restriction to the locus of defect $< \check\theta$. This in turn implies the validity of Theorem \ref{main theorem for local models} after restriction to the open subscheme ${}_{< \check\theta}Y^{\check\theta, princ}_B$ of $Y^{\check\theta, princ}_B$ consisting of all strata of defect $< \check\theta$, giving rise to an isomorphism of perverse sheaves
$$\gr \, \Psi (\IC_{Y^{\check\theta, princ}_G}) \Big|^*_{{}_{< \check\theta}Y^{\check\theta, princ}_B} \ \cong \bigoplus_{\check\theta_1 + \check\mu + \check\theta_2 = \check\theta} \bar{f}_{\check\theta_1, \check\mu, \check\theta_2,*} \ \Bigl( \IC_{Z^{-, \check\theta_1}_{\Bun_T}} \underset{ \ \Bun_T}{\boxtimes} \Bigl( \CF_{\check\mu} \ \boxtimes \ \IC_{Z^{\check\theta_2}} \Bigr) \Bigr) \Big|^*_{{}_{< \check\theta}Y^{\check\theta, princ}_B}$$
which is compatible with the action of the Lefschetz-$\sl_2$.
Since by definition none of the simple constituents of the perverse sheaf
$$\Bigl( \gr \, \Psi (\IC_{Y^{\check\theta, princ}_G}) \Bigr)_{\text{not} \, \text{on} \, {}_{\check\theta}Y^{\check\theta, princ}_B}$$
are supported on the complement of ${}_{< \check\theta}Y^{\check\theta, princ}_B$, this perverse sheaf must in fact be equal to the intermediate extension of its restriction to ${}_{< \check\theta}Y^{\check\theta, princ}_B$. Applying the intermediate extension functor to the above isomorphism between the restricted perverse sheaves thus yields the desired isomorphism (B) above.
\end{proof}

\bigskip

\ssec{The maximal defect locus}
\label{The maximal defect locus}

\sssec{Notation}
In the present subsection we construct the isomorphism (A), modulo the most involved part of the construction, which is dealt with in Subsection \ref{The diagonal locus} below. To simplify the notation, we denote
$$H_{\check\theta} \ = \ \Bigl( \gr \, \Psi (\IC_{Y^{\check\theta, princ}_G}) \Bigr)_{\text{on} \, {}_{\check\theta}Y^{\check\theta, princ}_B}$$
and observe that
$$\bigl( R_{\check\theta} \bigr)_{\text{on} \, {}_{\check\theta}Y^{\check\theta, princ}_B} \ = \ \CF_{\check\theta} \, ,$$
and then have to provide the desired isomorphism (A) of perverse sheaves
$$H_{\check\theta} \ \cong \ \CF_{\check\theta}$$
on the locus of maximal defect ${}_{\check\theta}Y^{\check\theta, princ}_B$. Recall from Subsection \ref{Defect and section} above that the locus of maximal defect ${}_{\check\theta}Y^{\check\theta, princ}_B$ is canonically identified with the space of divisors $X^{\check\theta}$ via the section 
$$X^{\check\theta} \ \stackrel{\cong}{\longto} \ {}_{\check\theta}Y^{\check\theta, princ}_B \ \longinto \ Y^{\check\theta, princ}$$
of the structure map $Y^{\check\theta, princ} \to X^{\check\theta}$.
For the remainder of the present section we will identify ${}_{\check\theta}Y^{\check\theta, princ}_B$ and $X^{\check\theta}$ without further mention.

\medskip

\sssec{Construction of the isomorphism (A)}
\label{Construction of the isomorphism (A)}
Consider next the diagonal locus $\Delta_X$ of $X^{\check\theta}$, i.e., the closed subvariety
$$\Delta_X: \ X \ \longinto X^{\check\theta}$$
$$x \ \longmapsto \ \check\theta x \, $$
We will construct the isomorphism (A) in two parts: One part on and one part away from the diagonal locus. To do so, we again use Lemma \ref{semisimplicity} above to split the perverse sheaves $H_{\check\theta}$ and $\CF_{\check\theta}$ into summands
$$H_{\check\theta} \ = \ \bigl( H_{\check\theta} \bigr)_{\text{on} \, \Delta_X} \bigoplus \bigl( H_{\check\theta} \bigr)_{\text{not} \, \text{on} \, \Delta_X}$$
$$\CF_{\check\theta} \ = \ \bigl( \CF_{\check\theta} \bigr)_{\text{on} \, \Delta_X} \bigoplus \bigl( \CF_{\check\theta} \bigr)_{\text{not} \, \text{on} \, \Delta_X}$$
where all simple constituents of the first summand are supported on $\Delta_X$ and where all simple constituents of the second summand are not supported on $\Delta_X$.

\medskip

Observe next that the collections of perverse sheaves $H_{\check\theta}$ and $\CF_{\check\theta}$ both admit natural factorization structures which respect the action of the Lefschetz-$\sl_2$. For $\CF_{\check\theta}$ this follows from the fact that the Picard-Lefschetz oscillators have this property, as is explained in Subsection \ref{PLOs for SL_2} above.
For $H_{\check\theta}$ this follows from Proposition \ref{factorization of gr Psi} together with the fact that the locus of maximal defect itself ``factorizes'' in the sense that the following diagram (where $\check\theta_1 + \check\theta_2 = \check\theta$) is cartesian:
$$\xymatrix@+10pt{
{}_{\check\theta_1}Y^{\check\theta_1, princ}_B \stackrel{\circ}{\times} {}_{\check\theta_2}Y^{\check\theta_2, princ}_B \ar[r] \ar[d]   &   {}_{\check\theta}Y^{\check\theta, princ}_B \ar[d]           \\
Y^{\check\theta_1, princ}_B \stackrel{\circ}{\times} Y^{\check\theta_2, princ}_B \ar[r]   &   Y^{\check\theta, princ}_B
}$$

\medskip

The induction hypothesis, which assures that we already have the desired $\sl_2$-equivariant isomorphisms $H_{\check\theta'} \cong \CF_{\check\theta'}$ for all $\check\theta' < \check\theta$, together with the factorization of the perverse sheaves $H_{\check\theta}$ and $\CF_{\check\theta}$ then establishes, via the standard factorization argument from \cite[Sec. 5.4]{BFGM} or \cite[Sec. 6.5]{Sch1}, that there exists an $\sl_2$-equivariant isomorphism of perverse sheaves
$$\bigl( H_{\check\theta} \bigr)_{\text{not} \, \text{on} \, \Delta_X} \ \cong \ \bigl( \CF_{\check\theta} \bigr)_{\text{not} \, \text{on} \, \Delta_X}\, .$$
It remains to construct an $\sl_2$-equivariant isomorphism
$$\bigl( H_{\check\theta} \bigr)_{\text{on} \, \Delta_X} \ \cong \ \bigl( \CF_{\check\theta} \bigr)_{\text{on} \, \Delta_X}$$
on the diagonal
$$\Delta_X: \ X \ \longinto X^{\check\theta} \, .$$
As this part forms the core of the present article and contains most of the content and effort, we prove it separately:

\bigskip

\ssec{The diagonal locus}
\label{The diagonal locus}

We finally come to the key calculation of the present article:

\medskip

\begin{lemma}
\label{lemma on the diagonal}
There exists an isomorphism of perverse sheaves
$$\bigl( H_{\check\theta} \bigr)_{\textnormal{on} \, \Delta_X} \ \cong \ \bigl( \CF_{\check\theta} \bigr)_{\textnormal{on} \, \Delta_X}$$
which is compatible with the action of the Lefschetz-$\sl_2$.
\end{lemma}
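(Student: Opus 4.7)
The strategy is to first establish an equality in the Grothendieck group of mixed Weil sheaves on $\Delta_X = X$, and then promote it to the desired $\sl_2$-equivariant isomorphism using semisimplicity (Lemma \ref{semisimplicity}) together with Gabber's theorem (Proposition \ref{Gabber's theorem}). The latter is the key ``tracking'' tool: on $\gr\,\Psi$ the Frobenius weight of each simple constituent equals its Cartan weight for the Lefschetz-$\sl_2$, while on $\CF_{\check\theta}$ the same holds by construction, since the Cartan weights come from the copies of $V = \Qellbar(\tfrac{1}{2}) \oplus \Qellbar(-\tfrac{1}{2})$ whose Frobenius weights are $\pm 1$. Consequently, any two semisimple mixed perverse sheaves with equal classes in the Grothendieck group of mixed Weil sheaves will automatically be isomorphic as $\sl_2$-modules.

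To compute $[H_{\check\theta}]$ as a class on $X^{\check\theta}$, I would start from Proposition \ref{stalks}, which identifies the $!$-restriction of $\gr\,\Psi(\IC_{Y^{\check\theta, princ}_G})$ to the maximal defect stratum $X^{\check\theta}$ with $\widetilde\Omega^{\check\theta}$. Using the splitting from Subsection \ref{Reduction to maximal defect locus} into summands supported / not supported on the maximal defect, together with the already-established isomorphism (B) and Lemma \ref{IC of Zastava} which identifies the $!$-stalks of the IC-sheaves of the (relative) Zastava spaces on their maximal defect loci with $U^{\check\theta_i}$, the $!$-restriction to $X^{\check\theta}$ of the not-on-max-defect summand becomes the explicit sum
$$\sum_{\substack{\check\theta_1 + \check\mu + \check\theta_2 = \check\theta \\ (\check\theta_1, \check\theta_2) \neq (0,0)}} \mathrm{add}_* \bigl( U^{\check\theta_1} \boxtimes \CF_{\check\mu} \boxtimes U^{\check\theta_2} \bigr).$$
Subtracting this from Lemma \ref{tilde Omega lemma} yields a closed combinatorial expression for $[H_{\check\theta}]$ in the Grothendieck group on $X^{\check\theta}$.

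Next, I would restrict both $[H_{\check\theta}]$ and $[\CF_{\check\theta}]$ to the diagonal $\Delta_X$ and match them termwise. Expanding $\Omega^{\check\theta}$ and $U^{\check\theta}$ via Lemma \ref{Omega and U}, and expanding the Picard-Lefschetz oscillator $\CF_{\check\theta} = \bigoplus_{\CK \in \Kost(\check\theta)} i_{\CK,*} \CP_\CK$ via the elementary splitting of external exterior powers of a direct sum, $\Lambda^{(n)}(L \oplus L') = \bigoplus_{a+b=n} \mathrm{add}_* \bigl( \Lambda^{(a)}(L) \boxtimes \Lambda^{(b)}(L') \bigr)$, applied to $\underline V = \Qellbar_X(\tfrac{1}{2}) \oplus \Qellbar_X(-\tfrac{1}{2})$, both sides get rewritten as sums indexed by Kostant partitions of $\check\theta$ decorated by a binary ``coloring'' of each positive coroot. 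The two half-integer Tate twists carried by the summands of $\underline V$ combine to exactly reproduce the integer twists appearing in Lemma \ref{Omega and U}(b), so the term-by-term match is possible with Frobenius weights preserved on the nose.

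The main obstacle is the combinatorial identity implicit in the above computation, namely
$$[\widetilde\Omega^{\check\theta}] \ - \sum_{(\check\theta_1, \check\theta_2) \neq (0,0)} \mathrm{add}_* \bigl( U^{\check\theta_1} \boxtimes \CF_{\check\mu} \boxtimes U^{\check\theta_2} \bigr) \ = \ [\CF_{\check\theta}],$$
and its compatibility with restriction to $\Delta_X$. This identity encodes the Langlands-dual Kostant partition combinatorics of $\check G$: for general $G$ the group $\check G$ has non-simple positive coroots, so that the Kostant partition function is strictly richer than a multinomial coefficient, and it is precisely this combinatorial richness that forces $\CF_{\check\theta}$ to contain summands supported on the strict diagonals of $X^{\check\theta}$ (a phenomenon invisible in the $\SL_2$ case of \cite{Sch1}). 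Once the diagonal Grothendieck-group equality is verified, semisimplicity of both sides (Lemma \ref{semisimplicity}) combined with Gabber's theorem yields the $\sl_2$-equivariant isomorphism and completes the proof.
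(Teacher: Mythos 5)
Your overall strategy --- compute the class of $\bigl(H_{\check\theta}\bigr)_{\text{on}\,\Delta_X}$ in the Grothendieck group from Proposition \ref{stalks}, isomorphism (B), Lemma \ref{tilde Omega lemma}, Lemma \ref{IC of Zastava} and Lemma \ref{Omega and U}, and then recover the perverse sheaf together with its $\sl_2$-action from semisimplicity and Gabber's theorem --- is exactly the one used in the paper, and your setup up to the formula $[H_{\check\theta}] = [\widetilde\Omega^{\check\theta}] - \sum_{(\check\theta_1,\check\theta_2)\neq(0,0)} \add_*\bigl(U^{\check\theta_1}\boxtimes\CF_{\check\mu}\boxtimes U^{\check\theta_2}\bigr)$ is correct. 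The gap is in the middle step. You propose to verify a termwise identity between this class and $[\CF_{\check\theta}]$; but (i) the global identity on $X^{\check\theta}$ that your ``main obstacle'' paragraph states is essentially the full assertion of Theorem \ref{main theorem for local models} restricted to the maximal defect stratum, so proving it directly is at least as hard as the theorem itself and is nowhere carried out; and (ii) the proposed matching mechanism does not actually produce matching terms: splitting $\Lambda^{(n)}(\underline{V})$ for $\underline{V}=\Qellbar_X(\tfrac{1}{2})\oplus\Qellbar_X(-\tfrac{1}{2})$ gives $\CP_n = \bigoplus_{a+b=n} \add_*\bigl(\Lambda^{(a)}(\Qellbar_X)[a](a)\boxtimes\Lambda^{(b)}(\Qellbar_X)[b](0)\bigr)$, whose second factors are shifted IC-extensions of sign local systems, whereas the summands of $U^{\check\theta}$ in Lemma \ref{Omega and U}(a) are the unshifted constant sheaves $\Qellbar_{X^{\CK}}[0](0)$; for $b\geq 2$ these are distinct simple objects, so there is no term-by-term match ``with Frobenius weights preserved on the nose,'' and the identity could only hold after cancellations you have not exhibited.

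The missing idea that makes the paper's proof short is a support observation, not a combinatorial identity: a summand of the form $i_{\CK,*}(\cdots)$ can have simple constituents supported on the \emph{main} diagonal $\Delta_X\colon x\mapsto\check\theta x$ only when the Kostant partition $\CK$ has length one, which forces $\check\theta$ to be a positive coroot. (The summands of $\CF_{\check\theta}$ on the partial diagonals $X^{\CK}$ are handled by the factorization argument of Subsection \ref{Construction of the isomorphism (A)}, not by this lemma.) After discarding everything not supported on $\Delta_X$, the whole computation collapses: $[\widetilde\Omega^{\check\theta}]$ contributes only $\Qellbar_{\Delta_X}(0)-\Qellbar_{\Delta_X}(1)$ (from the two extreme terms $\Omega^{\check\theta}$ and $U^{\check\theta}$ in Lemma \ref{tilde Omega lemma}), the correction term contributes $\Qellbar_{\Delta_X}(0)+\Qellbar_{\Delta_X}(0)$ (from $\check\theta_1=\check\theta$ and $\check\theta_2=\check\theta$), and the resulting class $-\Qellbar_{\Delta_X}(0)-\Qellbar_{\Delta_X}(1)$ forces $\bigl(H_{\check\theta}\bigr)_{\text{on}\,\Delta_X}\cong \Qellbar_{\Delta_X}[1](0)\oplus\Qellbar_{\Delta_X}[1](1) = V\otimes\IC_{\Delta_X}=\CP_1$, with the $\sl_2$-action pinned down by Gabber's theorem exactly as you indicate in your first paragraph. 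Without the length-one observation your argument does not close.
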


\medskip

\begin{proof}
Directly from the definitions one sees that the semisimple perverse sheaf $\CF_{\check\theta}$ admits simple summands supported on the main diagonal $\Delta_X: X \into X^{\check\theta}$ only if $\check\theta$ is a coroot of $G$; indeed, the summand $i_{\CK, *} \, \CP_{\CK}$ of $\CF_{\check\theta}$ is only supported on $\Delta_X$ if the Kostant partition $\CK$ is of length $1$, which forces $\check\theta$ to be a coroot.
In this case, the summand supported on the diagonal $\Delta_X$ is precisely the Picard-Lefschetz oscillator
$$\CP_1 = V \otimes \IC_{\Delta_X} = (\Qellbar(\tfrac{1}{2}) \oplus \Qellbar(-\tfrac{1}{2})) \otimes \Qellbar_{\Delta_X}[1](\tfrac{1}{2}) = \Qellbar_{\Delta_X}[1](1) \oplus \Qellbar_{\Delta_X}[1](0) \, .$$

\medskip

Our task is thus to show that the same holds for $\bigl( H_{\check\theta} \bigr)_{\text{on} \, \Delta_X}$. We will do so by computing the image of $\bigl( H_{\check\theta} \bigr)_{\text{on} \, \Delta_X}$ in the Grothendieck group; since $\bigl( H_{\check\theta} \bigr)_{\text{on} \, \Delta_X}$ is a perverse sheaf, we will able to reconstruct it from its image.
To do so, note first that the $!$-restriction of the nearby cycles to the stratum of maximal defect $X^{\check\theta} = {}_{\check\theta}Y^{\check\theta, princ}_B$ satisfies
$$(1) \ \ \Psi (\IC_{Y^{\check\theta, princ}_G}) \Big|^!_{X^{\check\theta}} \ \ \ = \ \ \ H_{\check\theta} \ \ + \ \ \Bigl( \gr \, \Psi (\IC_{Y^{\check\theta, princ}_G}) \Bigr)_{\text{not} \, \text{on} \, {}_{\check\theta}Y^{\check\theta, princ}_B} \Big|^!_{X^{\check\theta}}$$
in the Grothendieck group on $X^{\check\theta}$ as we do not need to distinguish between the nearby cycles and its associated graded and since $H_{\check\theta}$ is already supported on $X^{\check\theta}$.

\medskip

Next, express the complex
$$\Psi (\IC_{Y^{\check\theta, princ}_G}) \Big|^!_{X^{\check\theta}}$$
as a $\BZ$-linear combination of simple perverse sheaves in the Grothendieck group on $X^{\check\theta}$. Let $S_1^{\check\theta}$ denote the linear combination obtained by dropping all terms appearing in this expression that correspond to simple perverse sheaves not supported on $\Delta_X$. Using the stalk computation from Subsection \ref{Stalks of nearby cycles} above we will compute $S_1^{\check\theta}$ in Lemma \ref{S_1 lemma} below as
$$S_1^{\check\theta} \ = \ \Qellbar_{\Delta_X}(0) - \Qellbar_{\Delta_X}(1)$$
in the case where $\check\theta$ is a coroot, and as $S_1^{\check\theta} = 0$ otherwise.

\medskip

We then proceed analogously for the term
$$\Bigl( \gr \, \Psi (\IC_{Y^{\check\theta, princ}_G}) \Bigr)_{\text{not} \, \text{on} \, {}_{\check\theta}Y^{\check\theta, princ}_B} \Big|^!_{X^{\check\theta}} \, .$$
Express this complex as a $\BZ$-linear combination of simple perverse sheaves in the Grothendieck group, and let $S_2^{\check\theta}$ denote the linear combination obtained by dropping all terms appearing in this expression that correspond to simple perverse sheaves not supported on $\Delta_X$. Using our previous work from Subsection \ref{The maximal defect locus} above we will show in Lemma \ref{S_2 lemma} below that
$$S_2^{\check\theta} \ = \ \Qellbar_{\Delta_X}(0) + \Qellbar_{\Delta_X}(0)$$
in the case where $\check\theta$ is a coroot, and that $S_2^{\check\theta} = 0$ otherwise.

\medskip

Finally, from formula $(1)$ above we compute the image of $\bigl( H_{\check\theta} \bigr)_{\text{on} \, \Delta_X}$ in the Grothendieck group to be
$$\bigl( H_{\check\theta} \bigr)_{\text{on} \, \Delta_X} \ = \ S_1^{\check\theta} - S_2^{\check\theta} \ = \ - \Qellbar_{\Delta_X}(0) - \Qellbar_{\Delta_X}(1) \, .$$
This forces the perverse sheaf $\bigl( H_{\check\theta} \bigr)_{\text{on} \, \Delta_X}$ to be the desired
$$\Qellbar_{\Delta_X}[1](1) \oplus \Qellbar_{\Delta_X}[1](0) \ = \ (\Qellbar(\tfrac{1}{2}) \oplus \Qellbar(-\tfrac{1}{2})) \otimes \IC_{\Delta_X}\, ,$$
so that $\bigl( H_{\check\theta} \bigr)_{\text{on} \, \Delta_X}$ agrees with the Picard-Lefschetz oscillator $\CP_1$ as a perverse sheaf.
To check that the action of the Lefschetz-$\sl_2$ on $\bigl( H_{\check\theta} \bigr)_{\text{on} \, \Delta_X}$ is the correct one, recall from Lemma \ref{Lefschetz-sl_2} and Proposition \ref{Gabber's theorem} that the weights, as a Weil sheaf, of the vector space $\Qellbar(\tfrac{1}{2}) \oplus \Qellbar(-\tfrac{1}{2})$ appearing above as a tensor factor of $\bigl( H_{\check\theta} \bigr)_{\text{on} \, \Delta_X}$ agree with the Cartan weights as an $\sl_2$-representation.
But the only $\sl_2$-representation with these Cartan weights is the standard representation of $\sl_2$, showing that $\bigl( H_{\check\theta} \bigr)_{\text{on} \, \Delta_X}$ agrees with the Picard-Lefschetz oscillator $\CP_1$ also as a perverse sheaf with an action of the Lefschetz-$\sl_2$.
\end{proof}

\medskip

The above proof is completed by establishing the following two lemmas:

\medskip

\begin{lemma}
\label{S_1 lemma}
If $\check\theta$ is a coroot we have
$$S_1^{\check\theta} \ = \ \Qellbar_{\Delta_X}(0) - \Qellbar_{\Delta_X}(1) \, ;$$
otherwise we have $S_1^{\check\theta} = 0$.
\end{lemma}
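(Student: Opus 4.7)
The plan is to begin by invoking Proposition \ref{stalks} to identify the $!$-restriction of $\Psi(\IC_{Y^{\check\theta, princ}_G})$ to $X^{\check\theta}$ with $\widetilde\Omega^{\check\theta}$, so that $S_1^{\check\theta}$ is the sum of those simple perverse summands of $\widetilde\Omega^{\check\theta}$ whose support equals the small diagonal $\Delta_X \subset X^{\check\theta}$. Combining Lemma \ref{tilde Omega lemma} with Lemma \ref{Omega and U} and using proper base change along the finite addition maps $i_\CK$, one rewrites the class of $\widetilde\Omega^{\check\theta}$ in the Grothendieck group as a sum indexed by Kostant partitions:
$$\widetilde\Omega^{\check\theta} \ = \ \sum_{\CK \, \in \, \Kost(\check\theta)} \, i_{\CK,*}\Bigl(\underset{\check\beta \in \check R^+}{\boxtimes} \, T_{n_{\check\beta}}\Bigr), \qquad T_n \, := \sum_{k+l=n} \, add_*\bigl(\Lambda^{(k)}(\Qellbar_X)[k](k) \, \boxtimes \, \Qellbar_{X^{(l)}}\bigr) \, .$$

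Since $i_\CK$ is finite and its preimage of $\Delta_X$ is the small diagonal of $X^\CK$, a simple summand of $i_{\CK,*}\bigl(\underset{\check\beta}{\boxtimes} T_{n_{\check\beta}}\bigr)$ supported on $\Delta_X$ must come from a simple summand of the box product supported on the small diagonal of $X^\CK$. In the Grothendieck group, $\underset{\check\beta}{\boxtimes} T_{n_{\check\beta}}$ decomposes as a $\BZ$-linear combination of box products $\underset{\check\beta}{\boxtimes} S_{\check\beta}$ of simple summands of the individual $T_{n_{\check\beta}}$, with support $\prod_{\check\beta} \supp(S_{\check\beta})$; for this product to equal the small diagonal of $X^\CK$ one needs $\supp(S_{\check\beta}) = \Delta_X^{(n_{\check\beta})}$ in each factor, but then $\prod_{\check\beta} \Delta_X^{(n_{\check\beta})}$ has dimension equal to the number $k$ of nonzero $n_{\check\beta}$'s, so the match with $\dim \Delta_X = 1$ forces $k = 1$. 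Hence only Kostant partitions of the form $\CK: \check\theta = n \check\beta$ for a single positive coroot $\check\beta$ can contribute.

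A direct calculation gives $T_1 = \Qellbar_X(0) - \Qellbar_X(1)$ in the Grothendieck group, with the sign arising from the shift by $[1]$. The length-one partition $\CK: \check\theta = \check\theta$, which exists precisely when $\check\theta$ is itself a positive coroot and for which $i_\CK$ is the diagonal embedding $X \hookrightarrow X^{\check\theta}$, then contributes exactly $\Qellbar_{\Delta_X}(0) - \Qellbar_{\Delta_X}(1)$ to $S_1^{\check\theta}$. The central remaining claim is that for $n \ge 2$ the class $T_n$ has no simple summand supported on the small diagonal $\Delta_X^{(n)} \subset X^{(n)}$; granting this, the single-coroot partitions with $n \ge 2$ contribute zero, and together with the case of $\check\theta$ not a coroot (in which no length-one partition exists at all) one obtains the lemma. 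This vanishing statement is the main obstacle of the proof.

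To establish it, the strategy is to compute the $!$-restriction of each defining summand $add_*\bigl(\Lambda^{(k)}[k](k) \boxtimes \Qellbar_{X^{(l)}}\bigr)$ of $T_n$ to $\Delta_X^{(n)}$ via proper base change along the cartesian square relating the symmetric-group cover $p_k: X^k \to X^{(k)}$ to its small diagonal. The $S_k$-isotypic decomposition of $p_{k,*} \Qellbar_{X^k}$, combined with the orientation sign contributed by the $S_k$-action on the conormal bundle to the small diagonal in $X^k$, should then reduce the claim to a combinatorial identity asserting that the total $!$-restriction of $T_n$ to $\Delta_X^{(n)}$ coincides with the $!$-restriction to $\Delta_X^{(n)}$ of the IC-extensions from the disjoint locus $\overset{\circ}{X^{(n)}}$ of the simple summands of $T_n|_{\overset{\circ}{X^{(n)}}}$. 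Organizing this cancellation constitutes the bulk of the work.
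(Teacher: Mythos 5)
Your reduction steps are correct and, after the regrouping by total Kostant partitions, essentially reproduce the paper's argument: Proposition \ref{stalks} identifies the restriction with $\widetilde\Omega^{\check\theta}$, the combination of Lemma \ref{tilde Omega lemma} with Lemma \ref{Omega and U} is rearranged correctly into the classes $i_{\CK,*}(\boxtimes_{\check\beta} T_{n_{\check\beta}})$, the support argument correctly eliminates all $\CK$ supported on more than one coroot, and the computation $T_1 = \Qellbar_X(0) - \Qellbar_X(1)$ and its contribution via the diagonal embedding are right. The problem is that you stop exactly at the step you yourself call ``the main obstacle'': the claim that for $n \ge 2$ the class $T_n$ has no simple constituent supported on $\Delta_X^{(n)}$ is asserted but not proved, and the strategy you sketch for it (computing $!$-restrictions to the small diagonal via the $S_k$-cover, isotypic decompositions, and a cancellation identity) is not carried out. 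As described it also would not obviously close: to detect constituents supported exactly on $\Delta_X^{(n)}$ from the $!$-restriction to $\Delta_X^{(n)}$ you would first need to know the constituents of $T_n$ on every intermediate diagonal, since those also contribute to that $!$-restriction; so the sketch presupposes a full decomposition of $T_n$ that is never produced. As written, the proof is therefore incomplete at its crux.

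The good news is that the missing claim is true and requires no work at all, by the same support/dimension observation that drives the paper's proof. Each term $\add_*\bigl(\Lambda^{(k)}(\Qellbar_X)[k](k) \boxtimes \Qellbar_{X^{(l)}}\bigr)$ of $T_n$ is, up to sign and twist in the Grothendieck group, the pushforward of a \emph{simple} perverse sheaf with full support $X^{(k)} \times X^{(l)}$ (the IC-extension of the sign local system boxed with $\IC_{X^{(l)}}$, by Lemma \ref{external exterior powers}(b)) under the \emph{finite surjective} map $\add: X^{(k)} \times X^{(l)} \to X^{(n)}$. A finite surjective map of irreducible varieties is small, so this pushforward is a semisimple perverse sheaf all of whose simple summands have support equal to all of $X^{(n)}$; there are no summands on any proper closed subvariety, in particular none on $\Delta_X^{(n)}$ when $n \ge 2$. (The paper runs the identical dimension count without regrouping: the middle terms of Lemma \ref{tilde Omega lemma} and the Kostant partitions of length $\ge 2$ are discarded because the relevant pushforwards under finite maps have all constituents supported in dimension $\ge 2$.) With this one-line substitute for your final section, your argument becomes a complete and correct variant of the paper's proof.
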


\medskip

\begin{proof}
By Proposition \ref{stalks} we have
$$\Psi (\IC_{Y^{\check\theta, princ}_G}) \Big|^!_{X^{\check\theta}} \ = \ \widetilde\Omega^{\check\theta} \, .$$
Using Lemma \ref{tilde Omega lemma} and Lemma \ref{Omega and U} we can express the complex $\widetilde\Omega^{\check\theta}$ in the Grothendieck group of $X^{\check\theta}$ in terms of simple perverse sheaves. In doing so, note that in the formula in Lemma \ref{tilde Omega lemma} only the extreme cases $(\check\theta_1, \check\theta_2) = (\check\theta, 0)$ and $(\check\theta_1, \check\theta_2) = (0, \check\theta)$ contribute to the formula for $S_1^{\check\theta}$. For the resulting two terms $\Omega^{\check\theta}$ and $U^{\check\theta}$ we use the formulas in Lemma \ref{Omega and U}: A contribution to $S_1^{\check\theta}$ only happens for the summands corresponding to Kostant partitions $\CK$ of length $1$, which can only happen when $\check\theta$ is a coroot. In this case $\Omega^{\check\theta}$ contributes $\Qellbar_{\Delta_X}[1](1)$ and $U^{\check\theta}$ contributes $\Qellbar_{\Delta_X}[0](0)$, proving the lemma.
\end{proof}

\medskip

\begin{lemma}
\label{S_2 lemma}
If $\check\theta$ is a coroot we have
$$S_2^{\check\theta} \ = \ \Qellbar_{\Delta_X}(0) + \Qellbar_{\Delta_X}(0) \, ;$$
otherwise we have $S_2^{\check\theta} = 0$.
\end{lemma}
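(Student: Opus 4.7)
By isomorphism~(B) established in Subsection~\ref{The isomorphism (B)}, the perverse sheaf $\bigl(\gr \, \Psi(\IC_{Y^{\check\theta, princ}_G})\bigr)_{\textnormal{not on}\,{}_{\check\theta}Y^{\check\theta, princ}_B}$ is identified with the intermediate extension from ${}_{<\check\theta}Y^{\check\theta, princ}_B$ of $\bigoplus_{(\check\theta_1, \check\theta_2) \neq (0,0)} \CJ_{(\check\theta_1, \check\mu, \check\theta_2)}|_{{}_{<\check\theta}Y^{\check\theta, princ}_B}$, where we abbreviate
$$\CJ_{(\check\theta_1, \check\mu, \check\theta_2)} \ := \ \bar f_{\check\theta_1, \check\mu, \check\theta_2, *}\Bigl(\IC_{Z^{-,\check\theta_1}_{\Bun_T}} \underset{\Bun_T}{\boxtimes}\bigl(\CF_{\check\mu} \boxtimes \IC_{Z^{\check\theta_2}}\bigr)\Bigr).$$
The strategy is to establish the Grothendieck-group identity $R_{\check\theta}|^!_{X^{\check\theta}} = \widetilde\Omega^{\check\theta}$ and combine it with (B) to extract $S_2^{\check\theta}$.

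To this end, for each triple the finiteness of $\bar f_{\check\theta_1, \check\mu, \check\theta_2}$ together with proper base change along $X^{\check\theta} \hookrightarrow Y^{\check\theta, princ}_B$ reduces the $!$-restriction to a computation on the preimage, which identifies with the product $X^{\check\theta_1} \times X^{\check\mu} \times X^{\check\theta_2}$ (fibered over $\Bun_T$) sitting as the maximal-defect section of the source. Applying Lemma~\ref{IC of Zastava} to compute the $!$-restrictions of the Zastava IC-sheaves along their max-defect sections, one obtains in the Grothendieck group
$$\CJ_{(\check\theta_1, \check\mu, \check\theta_2)}|^!_{X^{\check\theta}} \ = \ \add_*\bigl(U^{\check\theta_1} \boxtimes \CF_{\check\mu} \boxtimes U^{\check\theta_2}\bigr).$$
Summing over all triples $\check\theta_1 + \check\mu + \check\theta_2 = \check\theta$ and matching against Lemma~\ref{tilde Omega lemma} via the combinatorial identity
$$\Omega^{\check\alpha} \ = \ \sum_{\check\theta_1 + \check\mu = \check\alpha} \add_*\bigl(U^{\check\theta_1} \boxtimes \CF_{\check\mu}\bigr)$$
(verified by comparing the Kostant-partition expansions in Lemma~\ref{Omega and U}(b) with the definition of the Picard-Lefschetz oscillators $\CF_{\check\mu}$) yields $R_{\check\theta}|^!_{X^{\check\theta}} = \widetilde\Omega^{\check\theta}$ in the Grothendieck group.

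Combined with isomorphism~(B) and Proposition~\ref{stalks}, this identity forces $(R_{\check\theta})_{\textnormal{on}\, X^{\check\theta}} = H_{\check\theta}$ in the Grothendieck group. One then argues that for $(\check\theta_1, \check\theta_2) \neq (0,0)$ no simple summand of $\CJ_{(\check\theta_1, \check\mu, \check\theta_2)}$ is supported on the main diagonal $\Delta_X$: this is a smallness-type statement about the finite maps $\bar f$, established by combining the factorization structure of Proposition~\ref{factorization of gr Psi} near $\Delta_X$ with the non-diagonal identifications already obtained in Subsection~\ref{Construction of the isomorphism (A)}. Consequently $(H_{\check\theta})_{\textnormal{on}\,\Delta_X} = (\CF_{\check\theta})_{\textnormal{on}\,\Delta_X}$ in the Grothendieck group, and the latter is immediate from the definition of $\CF_{\check\theta}$: only the length-one Kostant partition contributes to $\Delta_X$, yielding $-\Qellbar_{\Delta_X}(0) - \Qellbar_{\Delta_X}(1)$ when $\check\theta$ is a coroot and zero otherwise. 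Substituting into the tautological additive identity $S_1^{\check\theta} = (H_{\check\theta})_{\textnormal{on}\,\Delta_X} + S_2^{\check\theta}$ (which follows from $\gr\Psi|^!_{X^{\check\theta}} = \widetilde\Omega^{\check\theta}$) and invoking Lemma~\ref{S_1 lemma} yields the stated formula for $S_2^{\check\theta}$.

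The hardest step is the smallness-type claim that no simple summand of $\CJ_{(\check\theta_1, \check\mu, \check\theta_2)}$ with $(\check\theta_1, \check\theta_2) \neq (0,0)$ lands on $\Delta_X$; the combinatorial identity of the second paragraph, by contrast, is a Kostant-partition bookkeeping that is mechanical once the per-triple formula is in hand.
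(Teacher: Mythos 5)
Your proposal shares the paper's essential computation---proper base change along the maximal-defect section $X^{\check\theta}\longinto Y^{\check\theta,princ}_B$ against the finite maps $\bar f_{\check\theta_1,\check\mu,\check\theta_2}$, combined with Lemma \ref{IC of Zastava}, giving $\CJ_{(\check\theta_1,\check\mu,\check\theta_2)}\big|^!_{X^{\check\theta}}=\add_*\bigl(U^{\check\theta_1}\boxtimes\CF_{\check\mu}\boxtimes U^{\check\theta_2}\bigr)$---but then departs from the paper's route. The paper finishes in one step: since $\add$ is finite and every simple constituent of $U^{\check\theta_1}$ (resp.\ $\CF_{\check\mu}$) with $\check\theta_1\neq 0$ (resp.\ $\check\mu\neq 0$) has support of dimension $\geq 1$, a term can land on the one-dimensional locus $\Delta_X$ only if at most one entry of the triple is nonzero; as $\check\mu\neq\check\theta$ for the triples entering $S_2^{\check\theta}$, only $(\check\theta,0,0)$ and $(0,0,\check\theta)$ survive, and Lemma \ref{Omega and U}(a) then contributes one copy of $\Qellbar_{\Delta_X}[0](0)$ from each exactly when $\check\theta$ is a coroot.

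Your detour instead first proves $R_{\check\theta}|^!_{X^{\check\theta}}=\widetilde\Omega^{\check\theta}$, deduces $H_{\check\theta}=\CF_{\check\theta}$ in the Grothendieck group, and backs out $S_2^{\check\theta}=S_1^{\check\theta}-(H_{\check\theta})_{\text{on}\,\Delta_X}$; this inverts the paper's architecture (Lemma \ref{lemma on the diagonal} deduces $(H_{\check\theta})_{\text{on}\,\Delta_X}$ \emph{from} $S_1$ and $S_2$). It is not circular, but it rests on two claims you do not establish. First, the identity $\Omega^{\check\alpha}=\sum_{\check\theta_1+\check\mu=\check\alpha}\add_*(U^{\check\theta_1}\boxtimes\CF_{\check\mu})$ is true but is not mere Kostant-partition bookkeeping: after reducing to a single positive coroot it is equivalent to the Koszul-type cancellation $\sum_{a+b=n}(-1)^a\,\add_*\bigl(\Lambda^{(a)}(\Qellbar_X)\boxtimes\Qellbar_{X^{(b)}}\bigr)=0$ for $n>0$ on symmetric powers of the curve---a standard but genuine geometric input that the paper's argument never needs. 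Second, the ``smallness'' claim that no constituent of $\CJ_{(\check\theta_1,\check\mu,\check\theta_2)}$ with $(\check\theta_1,\check\theta_2)\neq(0,0)$ is supported on $\Delta_X$ is indeed needed to pass from the class of $R_{\check\theta}|^!_{X^{\check\theta}}$ to $(H_{\check\theta})_{\text{on}\,\Delta_X}$, but factorization cannot deliver it: factorization is blind precisely on the small diagonal. The correct justification is again the support-dimension count above (equivalently, that ${}^pH^0$ of the $!$-restriction to $\Delta_X$ vanishes for those triples), which is the same observation that makes the direct proof work---so this is not the hardest step but the easiest, and you have misplaced the difficulty. In short, your second paragraph already contains everything needed; extracting the $\Delta_X$-supported terms from the per-triple formula directly, rather than routing through the global Grothendieck-group identity, closes both gaps at once and recovers the paper's proof.
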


\medskip

\begin{proof}
We will use our previous computation from Subsection \ref{Construction of the isomorphism (A)} above. To compute the contribution to $S_2^{\check\theta}$, we need to compute the $!$-restriction of each summand in $\bigl( \CF_{\check\theta} \bigr)_{\text{not} \, \text{on} \, \Delta_X}$ to the stratum $X^{\check\theta}$. We do so by using the cartesian diagram
$$\xymatrix@+10pt{
X^{\check\theta_1} \times X^{\check\mu} \times X^{\check\theta_2} \, \ar[r] \ar^{\add}[d] & \, Z^{-,\check\theta_1}_{\Bun_{T}} \, \underset{\Bun_T}{\times} \bigl( \, X^{\check\mu} \times Z^{\check\theta_2} \bigr) \ar^{\bar{f}_{\check\theta_1, \check\mu, \check\theta_2}}[d] \\
X^{\check\theta} \ \ar[r] & \ Y^{\check\theta, princ}_B \\
}$$
where the top horizontal arrow is induced by the section map discussed in Subsection \ref{Zastava stratification} above, and the bottom horizontal arrow is the section map from Subsection \ref{Defect and section} above.
By this cartesian diagram we need to study the contribution to $S_2^{\check\theta}$ of the pushforwards
$$add_* \Bigl( \IC_{Z^{-, \check\theta_1}} \big|^!_{X^{\check\theta_1}} \boxtimes  \CF_{\check\mu} \ \boxtimes \ \IC_{Z^{\check\theta_2}} \big|^!_{X^{\check\theta_2}} \Bigr)$$
for the addition maps
$$add: \ X^{\check\theta_1} \times X^{\mu} \times X^{\check\theta_2} \ \longto \ X^{\check\theta} \, .$$
As in the proof of Lemma \ref{S_1 lemma} above, the only contributions to $S_2^{\check\theta}$ are made by the extreme cases where either $\check\theta_1 = \check\theta$ or $\check\theta_2 = \check\theta$.
Using that $\IC_{Z^{\check\theta}} \big|^!_{X^{\check\theta}} = U^{\check\theta}$ by Lemma \ref{IC of Zastava} and the formula for $U^{\check\theta}$ in Lemma \ref{Omega and U}, we see that in those two extreme cases a contribution to $S_2^{\check\theta}$ takes place only if the Kostant partition $\CK$ has length $1$, i.e., if $\check\theta$ is a coroot. In this case, the contribution of the two extreme cases is one copy of $\Qellbar_{\Delta_X}[0](0)$ each, as desired.
\end{proof}

\bigskip

\ssec{The base case of the induction}

In Subsections \ref{Reduction to maximal defect locus} through \ref{The diagonal locus} we have completed the induction step of our proof of Theorem \ref{main theorem for local models}. One can verify that no separate base case is needed for the induction: The argument of the induction step goes through to establish the case where the positive coweight $\check\theta$ has length $1$, i.e., is a simple coroot. For the convenience of the reader we now sketch how to indeed arrive at the case of length $1$ via the induction step; one may then alternatively use the case of length $1$ as the base case of the induction.

\medskip

\begin{lemma}
Theorem \ref{main theorem for local models} holds if $\check\theta$ is of length $1$, i.e., if $\check\theta = \check\alpha_i$ is a simple coroot.
\end{lemma}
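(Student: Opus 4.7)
The plan is to apply the argument of the induction step (Subsections \ref{Reduction to maximal defect locus} through \ref{The diagonal locus}) directly to $\check\theta = \check\alpha_i$, using the trivial case $\check\theta = 0$ as the induction base. For $\check\theta = 0$ the statement of Theorem \ref{main theorem for local models} reduces to an assertion about the IC-sheaf of a trivial space (since $X^0$ and $Z^0$ are points), which holds tautologically; via the interplay principle this also yields the corresponding statement of Theorem \ref{main theorem for nearby cycles} on the defect-free locus of $\VinBun_G^{princ}|_0$, where the smoothness of the defect-free family makes the unipotent nearby cycles equal to the IC-sheaf of the special fiber with trivial monodromy.

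With this base in hand, I apply the induction step to $\check\theta = \check\alpha_i$. Two structural simplifications occur. First, since $X^{\check\alpha_i} = X^{(1)} = X$, the diagonal embedding $\Delta_X : X \into X^{\check\alpha_i}$ is the identity map, and the entire maximal defect stratum ${}_{\check\alpha_i} Y^{\check\alpha_i, princ}_B$ coincides with the diagonal locus; consequently the factorization-based argument of Subsection \ref{Construction of the isomorphism (A)} producing the isomorphism off of $\Delta_X$ is vacuous. Second, the only positive coweight strictly smaller than $\check\alpha_i$ is $0$, so the invocation of the induction hypothesis in Subsection \ref{The isomorphism (B)} uses only the $\check\theta = 0$ base case verified above.

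The remainder of the argument then proceeds unchanged: isomorphism (B) is obtained via the intermediate extension argument of Subsection \ref{The isomorphism (B)} with the $\check\theta = 0$ base as input, and isomorphism (A) on the full maximal defect stratum is provided by Lemma \ref{lemma on the diagonal} applied to the coroot $\check\alpha_i$. I expect the main obstacle, as in the induction step, to be the maximal defect computation itself, which is already resolved by Lemma \ref{lemma on the diagonal}; its proof rests on Lemmas \ref{S_1 lemma} and \ref{S_2 lemma}, whose arguments use only Proposition \ref{stalks} and the explicit Kostant partition descriptions in Lemmas \ref{tilde Omega lemma} and \ref{Omega and U}, none of which rely on the induction hypothesis. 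Hence no separate argument beyond the induction step is required to settle the length-one case.
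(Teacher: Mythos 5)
Your proposal is correct and follows essentially the same route as the paper: the paper likewise observes that the induction step goes through for $|\check\theta|=1$, handles isomorphism (A) by directly invoking Lemma \ref{lemma on the diagonal} (since the maximal defect locus is all of $X^{\check\alpha_i}=\Delta_X$), and handles isomorphism (B) using exactly the input you identify, namely that the complement of the maximal defect stratum is the defect-free locus ${}_0Y^{\check\alpha_i,princ}$, which is smooth over $\BA^1$, so that both sides of (B) become the IC-sheaf of the $B$-locus with trivial $\sl_2$-action. The only cosmetic difference is that the paper phrases (B) as a direct identification of both sides with $\IC_{Y^{\check\alpha_i,princ}_B}$ (using finiteness of the maps $\bar f$ and the description of the irreducible components) rather than as an appeal to the tautological $\check\theta=0$ case followed by intermediate extension, but these unwind to the same computation.
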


\medskip

\begin{proof}
We use the same notation as in the induction step in Subsections \ref{Reduction to maximal defect locus} through \ref{The diagonal locus} above.
We first verify the existence of the isomorphism (B). To do so, note first that now the stratum of defect $\check\theta$ is the only defect stratum; its complement in $Y^{\check\theta, princ}$ is the defect-free locus ${}_0Y^{\check\theta, princ}$, which is smooth over $L_B = \BA^1$ by Subsection \ref{The defect-free locus of VinBun_G} above. This implies that the perverse sheaf
$$\Bigl( \gr \, \Psi (\IC_{Y^{\check\theta, princ}_G}) \Bigr)_{\text{not} \, \text{on} \, {}_{\check\theta}Y^{\check\theta, princ}_B}$$
is simply the IC-sheaf of the entire $B$-locus $Y^{\check\theta, princ}_B$ with the trivial $\sl_2$-action. On the other hand, the finiteness of the compactified maps $\bar f$ shows that the complex
$$\bigl( R_{\check\theta} \bigr)_{\text{not} \, \text{on} \, {}_{\check\theta}Y^{\check\theta, princ}_B}$$
is equal to the direct sum of the IC-sheaf of the stratum ${}_{\check\theta, 0, 0}Y^{\check\theta, princ}_B$ and the IC-sheaf of the stratum ${}_{0, 0, \check\theta}Y^{\check\theta, princ}_B$, and both IC-sheaves are equipped with the trivial $\sl_2$-action. As the closures of these two strata form the irreducible components of the $B$-locus $Y^{\check\theta, princ}_B$, this sum is also equal to the IC-sheaf of the entire $B$-locus $Y^{\check\theta, princ}_B$, completing the proof of the existence of the isomorphism (B).

\medskip

It remains to verify the existence of the isomorphism (A). But since the locus of maximal defect ${}_{\check\theta}Y^{\check\theta}_B$ consist of only the diagonal locus $\Delta_X$, we can directly apply Lemma \ref{lemma on the diagonal}, finishing the proof. Alternatively one can also repeat the proof of Lemma \ref{lemma on the diagonal} in the present special case; in this case the proof is dramatically simpler due to the fact that $\bigl( H_{\check\theta} \bigr)_{\text{on} \, \Delta_X} = H_{\check\theta}$ and the fact that no Kostant partition other than the trivial Kostant partition exists.
\end{proof}

\bigskip
\bigskip
\bigskip
\bigskip
\bigskip
\bigskip

\section{Vinberg fusion and a geometric Hopf algebra structure}
\label{Vinberg fusion and a geometric Hopf algebra structure}

\bigskip

\ssec{Recollections and notation}

\sssec{The diagonal fiber}
Fix a $k$-point $x$ of the curve $X$. We denote by $\BY^{\check\theta}$ the fiber of the map $Y^{\check\theta} \longto X^{\check\theta}$ over the point $\check\theta x \in X^{\check\theta}$, and refer to it as the \textit{diagonal fiber}. We use self-explanatory notation such as $\BY^{\check\theta}_G$, $\BY^{\check\theta}_B$, ${}_0\BY^{\check\theta}$, ${}_{\check\theta'}\BY^{\check\theta}_B$, $\BY^{princ}$ to denote the application of various previously discussed notions to the diagonal fiber. Similarly, we denote by $\BZ^{\check\theta}$ the diagonal fiber of the Zastava spaces $Z^{\check\theta}$, and use the notation ${}_0\BZ^{\check\theta}$, ${}_{\check\theta'} \BZ^{\check\theta}$ analogously.

\medskip

\sssec{Irreducible components of the diagonal fiber of Zastava space}
We remark that the irreducible components of the diagonal fiber ${}_0\BZ^{\check\theta}$ have been linked to the Langlands dual group $\check{G}$ in \cite{FM}, \cite{FFKM}, and \cite{BFGM}, building on \cite{MV}. To review the result, let $\check{\Fn}$ denote the Lie algebra of the unipotent part $\check N$ of the Borel $\check B$ of the Langlands dual group $\check G$ of $G$, and let $U(\check{\Fn})$ denote its universal enveloping algebra. The result then is:

\medskip

\begin{lemma}
\label{Zastava cohomology}
The top compactly supported cohomology group $H^{top}_c({}_0\BZ^{\check\theta})$ is canonically identified with the $\check\theta$-weight space $U(\check{\Fn})[\check\theta]$ of $U(\check{\Fn})$.
\end{lemma}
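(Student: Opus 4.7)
The plan is to realize ${}_0\BZ^{\check\theta}$ inside the affine Grassmannian of $G$ at the point $x$ as an intersection of opposite semi-infinite orbits, and then apply Mirković-Vilonen theory \cite{MV} to identify its top compactly supported cohomology with $U(\check{\Fn})[\check\theta]$.

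First, I would recall the identification, standard in the literature (see \cite{FM}, \cite{FFKM}, \cite{BFGM}), of the diagonal Zastava fiber $\BZ^{\check\theta}$ with (essentially) an intersection of opposite semi-infinite orbits in the affine Grassmannian of $G$ at $x$. Under this identification, the defect-free locus ${}_0\BZ^{\check\theta}$ corresponds to the open such intersection, which is smooth of the expected dimension, consistent with the smoothness asserted in Subsection \ref{Recollections on Zastava spaces}. In particular $H^{top}_c({}_0\BZ^{\check\theta})$ has a basis given by the fundamental classes of the top-dimensional irreducible components, namely the (open parts of the) relevant MV cycles.

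Second, I would invoke Mirković-Vilonen: the top compactly supported cohomology of such an intersection has a canonical basis given by the fundamental classes of the MV cycles, which under the Geometric Satake equivalence form a canonical basis of a weight space of an irreducible $\check G$-representation. A direct count then shows $\dim H^{top}_c({}_0\BZ^{\check\theta}) = |\Kost(\check\theta)|$, matching $\dim U(\check{\Fn})[\check\theta]$ via the Poincaré-Birkhoff-Witt theorem. To obtain $U(\check{\Fn})$ itself rather than individual irreducible representations, one uses the stabilization of weight multiplicities as the highest weight grows in the dominance order, together with the geometric fact that the defect-free Zastava sees only the unipotent part $\check N$ of the dual Borel and not all of $\check G$.

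The main obstacle will be promoting this matching of bases into a canonical and functorial vector-space identification that is compatible with the multiplicative structure of $U(\check{\Fn})$. Canonicity at the level of vector spaces ultimately rests on the Geometric Satake equivalence \cite{MV}; compatibility with multiplication is the deeper subtlety and requires a geometric operation on Zastava spaces whose induced map on cohomology realizes the Lie bracket in $\check{\Fn}$. In the present paper, this is precisely what the Vinberg fusion developed in the remainder of Section \ref{Vinberg fusion and a geometric Hopf algebra structure} is designed to accomplish.
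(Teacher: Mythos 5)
The paper does not prove this lemma at all: it is stated as a recollection, with the identification attributed to \cite{FM}, \cite{FFKM}, \cite{BFGM} (building on \cite{MV}). Your sketch --- realizing ${}_0\BZ^{\check\theta}$ as an open intersection of opposite semi-infinite orbits in the affine Grassmannian, counting top-dimensional components via the Kostant partition function, and matching with $\dim U(\check{\Fn})[\check\theta]$ by PBW --- is precisely the argument of those cited sources, and you correctly observe that the lemma asserts only a canonical vector-space identification, compatibility with multiplication being exactly the open question of Subsection \ref{A question regarding the Hopf algebra structure}.
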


\bigskip

\ssec{Beilinson-Drinfeld fusion and Vinberg fusion}
\label{Beilinson-Drinfeld fusion and Vinberg fusion}

\sssec{Overview}
The non-degenerate principal direction ${}_0Y^{\check\theta, princ}$ comes equipped with a natural map to $\BA^1$, corresponding to the principal Vinberg direction, and a natural map to the space of divisors $X^{\check\theta}$. Taken together, we obtain a map
$${}_0Y^{\check\theta, princ} \ \longto \ X^{\check\theta} \times \BA^1$$
which will play a key role in what follows. One may view this map as a combination of two types of degenerations that are very different in nature: The first degeneration corresponds to ``collisions'' of divisors in $X^{\check\theta}$, and has been pioneered by Beilinson and Drinfeld (\cite{BD1}, \cite{BD2}). The second degeneration is, to our knowledge, new, and is obtained not by degenerating the divisor on the curve, but rather by ``degenerating the group'' $G$ in the ``Vinberg direction''. The space ${}_0Y^{\check\theta, princ}$ realizes both degenerations simultaneously in one total space. As we will see below, this makes the space ${}_0Y^{\check\theta, princ}$ a geometric incarnation of a Hopf algebra, and the two degenerations together indeed define in a geometric way a Hopf algebra structure on the cohomology of the Zastava spaces, i.e., on the universal enveloping algebra $U(\check{\Fn})$.

\medskip

\sssec{The two-parameter degeneration}
For concreteness, we restrict the family ${}_0Y^{\check\theta, princ} \to X^{\check\theta} \times \BA^1$ further, obtaining a two-parameter degeneration $d$ over the product $X \times \BA^1$ as follows. As above we fix a $k$-point $x$ of $X$ and a non-zero positive coweight $\check\theta \in \Lambdach_G^{pos}$. Furthermore, let $\check\theta_1, \check\theta_2 \in \Lambdach_G^{pos} \setminus \{ 0 \}$ such that $\check\theta_1 + \check\theta_2 = \check\theta$. We then define the family
$$d: \ Q \ \longto X \times \BA^1$$
as the pullback of the family
$${}_0Y^{\check\theta, princ} \ \longto \ X^{\check\theta} \times \BA^1$$
along the map
$$X \ \longinto \ X^{\check\theta}$$
$$y \ \longmapsto \ \check\theta_1 y + \check\theta_2 x \, .$$
To make the notation more intuitive and stress that the degeneration $d$ is obtained from the larger family ${}_0Y^{\check\theta, princ}$, we will often abuse notation and denote the fibers of $d$ by expressions such as $Q|_{(\check\theta x, c)}$ and $Q|_{(\check\theta_1 x + \check\theta_2 y, c)}$.

\medskip

We can already observe:

\medskip

\begin{lemma}
\label{fiber lemma}
\begin{itemize}
\item[]
\item[]
\item[(a)] The fiber $Q|_{(\check\theta x, \, 1)}$ of $d$ over the point $(\check\theta x, 1)$ is naturally identified with ${}_0\BZ^{\check\theta}$.
\item[]
\item[(b)] On the level of the underlying reduced schemes, the fiber $Q|_{(\check\theta x, \, 0)}$ of $d$ over the point $(\check\theta x, 0)$ decomposes into a disjoint union of open and closed components
$$\bigcup_{\check\mu_1 + \check\mu_2 \, = \, \check\theta} \ {}_0\BZ^{\check\mu_1} \times {}_0\BZ^{\check\mu_2}$$
where the union runs over all positive coweights $\check\mu_1, \check\mu_2 \in \Lambdach_G^{pos}$ satisfying that $\check\mu_1 + \check\mu_2 = \check\theta$.
\item[]
\item[(c)] The fiber $Q|_{(\check\theta_1 x + \check\theta_2 y, \, 1)}$ of $d$ over the point $(\check\theta_1 x + \check\theta_2 y, 1)$ is naturally identified with ${}_0\BZ^{\check\theta_1} \times {}_0\BZ^{\check\theta_2}$.
\end{itemize}
\end{lemma}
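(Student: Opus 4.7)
The plan is to deduce all three assertions from the structural results already established for $Y^{\check\theta}$, combined with standard properties of the Zastava spaces. Parts (a) and (c) sit inside the $G$-locus and follow quickly; the actual content is part (b), which requires a careful analysis of the defect stratification of the $B$-locus.

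For part (a), by construction of the principal line $L_B$ the point $1 \in \BA^1$ corresponds to $c_G \in T_{adj}$, so $(\check\theta x, 1)$ lies in the $G$-locus $Y^{\check\theta, princ}_G$. Applying the canonical trivialization $Y^{\check\theta}_G \cong {}_0 Z^{\check\theta} \times T_{adj}$ of Remark \ref{G-locus of local model}, the fiber over $(\check\theta x, c_G)$ is tautologically the diagonal fiber ${}_0 \BZ^{\check\theta}$. For part (c), we are again in the $G$-locus, but now over the divisor $\check\theta_1 x + \check\theta_2 y$ with $y \neq x$, whose supports are disjoint. Restricting the cartesian square of Lemma \ref{factorization in families} to $t = c_G$ specializes to the standard factorization isomorphism of ${}_0 Z^{\check\theta}$ over $X^{\check\theta_1} \stackrel{\circ}{\times} X^{\check\theta_2}$; pulling back along the single point $(\check\theta_1 x, \check\theta_2 y)$ yields the desired product ${}_0 \BZ^{\check\theta_1} \times {}_0 \BZ^{\check\theta_2}$.

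For part (b) the plan has three steps. First, identify the fiber $Q|_{(\check\theta x, 0)}$ with the diagonal fiber of ${}_0 Y^{\check\theta, princ}_B$. Second, apply the stratification of $Y^{\check\theta}_B$ from Corollary \ref{stratification of local models} and observe that intersecting with the defect-free locus ${}_0 Y^{\check\theta}$ kills precisely the strata with nonzero middle defect, leaving the strata
$${}_0 Z^{-, \check\mu_1}_{\Bun_T} \underset{\Bun_T}{\times} {}_0 Z^{\check\mu_2}$$
indexed by pairs $\check\mu_1 + \check\mu_2 = \check\theta$ in $\Lambdach_G^{pos}$. Third, restrict each such stratum to its diagonal fiber over $\check\theta x$: the $\Bun_T$-coupling trivializes upon fixing a single point of the curve, and the $B \leftrightarrow B^-$ involution of $G$ identifies ${}_0 \BZ^{-, \check\mu_1}$ with ${}_0 \BZ^{\check\mu_1}$, yielding the claimed ${}_0 \BZ^{\check\mu_1} \times {}_0 \BZ^{\check\mu_2}$.

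The main obstacle is the open-and-closed assertion in (b): in the ambient $Y^{\check\theta}_B$ the strata of Corollary \ref{stratification of local models} close up into each other, so one must show this closure behavior disappears after jointly imposing the defect-free condition and the diagonal-divisor constraint. I plan to address this by combining the factorization in families of Lemma \ref{factorization in families} around disjoint configurations with a direct analysis of the defect stratification of the Zastava spaces from Subsection \ref{Zastava stratification}: any specialization from one stratum into another would force the defect value to move inside the divisor space $X^{\check\theta}$, which is incompatible with fixing the divisor to equal $\check\theta x$ together with the defect-free condition.
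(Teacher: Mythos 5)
Your proof is correct and follows the same route as the paper, whose entire proof of this lemma is a three-line citation: part (a) from Remark \ref{G-locus of local model}, part (b) from Corollary \ref{stratification of local models}, and part (c) from Remark \ref{G-locus of local model} together with Lemma \ref{factorization in families} and the comment following it; your write-up simply supplies the details the paper leaves implicit, and your openness-and-closedness argument via semicontinuity of the defect is the right one. The only phrase worth sharpening is in part (b): the $\Bun_T$-coupling in the $\check\mu = 0$ strata collapses not because one fixes a single point of the curve, but because on the defect-free Zastava space the $T$-bundle is determined by the divisor, so fixing the divisor to be $\check\theta x$ pins down the $\Bun_T$-component and turns the fiber product over $\Bun_T$ into a plain product of diagonal fibers.
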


\medskip

\begin{proof}
Part (a) follows directly from Remark \ref{G-locus of local model}.
Corollary \ref{stratification of local models} immediately shows Part (b).
Finally, Part (c) follows from Remark \ref{G-locus of local model} together with Lemma \ref{factorization in families} and the comment following it.
\end{proof}

\medskip

\sssec{Two one-parameter degenerations}
Next let $d_{comult}$ denote the one-parameter family over the curve $X$ obtained by restricting the family $d$ above along the inclusion
$$X \times \{ 1 \} \ \longinto \ X \times \BA^1 \, .$$
Similarly, let $d_{mult}$ denote the one-parameter family over the affine line $\BA^1$ obtained by restricting the family $d$ above along the inclusion
$$\{ x \} \times \BA^1 \ \longinto \ X \times \BA^1 \, .$$

\medskip

Using Lemma \ref{fiber lemma} above we find:

\medskip

\begin{corollary}
\label{special versus general}
\begin{itemize}
\item[]
\item[]
\item[(a)] The one-parameter family $d_{comult}$ is trivial over $X \setminus \{ x \}$. It deforms the special fiber
$$Q|_{(\check\theta x, \, 1)} \ = \ {}_0\BZ^{\check\theta}$$
to the general fiber
$$Q|_{(\check\theta_1 x + \check\theta_2 y, \, 1)} \ = \ {}_0\BZ^{\check\theta_1} \times {}_0\BZ^{\check\theta_2} \, .$$
\item[]
\item[(b)] The one-parameter family $d_{mult}$ is trivial over $\BA^1 \setminus \{ 0 \}$. It deforms the special fiber $Q|_{(\check\theta x, \, 0)}$, which on the level of reduced schemes agrees with the disjoint union
$$\bigcup_{\check\mu_1 + \check\mu_2 \, = \, \check\theta} \ {}_0\BZ^{\check\mu_1} \times {}_0\BZ^{\check\mu_2} \, ,$$
to the general fiber $Q|_{(\check\theta x, \, 1)} = {}_0\BZ^{\check\theta}$.
\end{itemize}
\end{corollary}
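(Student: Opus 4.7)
The proof is a direct consequence of Lemma \ref{fiber lemma} combined with two structural inputs established earlier in the paper: Remark \ref{G-locus of local model}, which identifies the $G$-locus of ${}_0Y^{\check\theta, princ}$ with the literal product ${}_0Z^{\check\theta} \times (L_B \setminus \{0\})$, and Lemma \ref{factorization in families}, which supplies a factorization cartesian square for ${}_0Y^{\check\theta, princ}$ over the disjoint locus $X^{\check\theta_1} \stackrel{\circ}{\times} X^{\check\theta_2}$. The fiber identifications in both parts are already encoded in Lemma \ref{fiber lemma}, so the only additional content is the triviality of the two families away from their respective special fibers, and these two statements come from exactly the two structural inputs above, one for each part.

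For part (a), I would restrict the cartesian square of Lemma \ref{factorization in families} to the $\BA^1$-parameter value $1$, obtaining the analogous factorization square for the defect-free Zastava space ${}_0Z^{\check\theta}$ over the disjoint locus of $X^{\check\theta}$. Pulling this square back along the map $y \mapsto (\check\theta_1 y, \check\theta_2 x)$ for $y \in X \setminus \{x\}$ identifies the restriction of $d_{comult}$ to $X \setminus \{x\}$ with a family whose fibers are canonically ${}_0\BZ^{\check\theta_1} \times {}_0\BZ^{\check\theta_2}$ via the Beilinson-Drinfeld factorization isomorphism; in particular the family is trivialized away from the collision locus $y = x$. The identification of the special fiber at $y = x$ with ${}_0\BZ^{\check\theta}$ is then simply Lemma \ref{fiber lemma}(a), while the identification of the general fiber with the product is Lemma \ref{fiber lemma}(c).

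For part (b), I would observe that, by construction, $d_{mult}$ is the fiber of ${}_0Y^{\check\theta, princ} \to X^{\check\theta} \times \BA^1$ over the slice $\{\check\theta x\} \times \BA^1$. Remark \ref{G-locus of local model} identifies ${}_0Y^{\check\theta, princ}|_{\BA^1 \setminus \{0\}}$ with the literal product ${}_0Z^{\check\theta} \times (\BA^1 \setminus \{0\})$, so restricting to the diagonal slice $\check\theta x$ yields the trivial family ${}_0\BZ^{\check\theta} \times (\BA^1 \setminus \{0\})$ over $\BA^1 \setminus \{0\}$. The identification of the special fiber at $0 \in \BA^1$ with the indicated disjoint union of products of Zastava diagonal fibers is then exactly Lemma \ref{fiber lemma}(b).

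The proof carries no real obstacle, since everything is a repackaging of Lemma \ref{fiber lemma} together with the two structural inputs. The only subtle point is the reading of the word \emph{trivial} in part (a), which should be interpreted as referring to the canonical Beilinson-Drinfeld factorization trivialization rather than a literal scheme-theoretic product decomposition of the diagonal restriction of ${}_0Z^{\check\theta_1}$ to $X \setminus \{x\}$; this is the standard interpretation in the factorization framework and is precisely what is needed for the cospecialization maps on cohomology used in the subsequent construction of the coalgebra structure.
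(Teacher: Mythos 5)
Your proposal is correct and follows essentially the same route as the paper, which states the corollary as an immediate consequence of Lemma \ref{fiber lemma} (itself proved from Remark \ref{G-locus of local model}, Corollary \ref{stratification of local models}, and Lemma \ref{factorization in families}), exactly the inputs you invoke. Your explicit justification of the two triviality claims, and your remark on how ``trivial'' is to be read in part (a), only make explicit what the paper leaves implicit.
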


\medskip

\sssec{Remarks}
The family $d_{comult}$ is nothing but the usual factorizable version of the Zastava space corresponding to the degeneration of divisors, and has been studied in \cite{BFGM}. Like a coalgebra structure, this family deforms a single Zastava space into a product of Zastava spaces; it forms an example of the usual Beilinson-Drinfeld fusion from \cite{BD1}, \cite{BD2}.
As discussed before, the family $d_{mult}$ is, to our knowledge, new, and may be considered a \textit{Vinberg degeneration} of the Zastava spaces. Like an algebra structure, this family deforms a product of Zastava spaces into a single Zastava space; we refer to this process as \textit{Vinberg fusion}.
The key phenomenon, to be exploited below, is that both degenerations are compatible in the sense that their are obtained as subfamilies of the larger family
$${}_0Y^{\check\theta, princ} \ \longto \ X^{\check\theta} \times \BA^1 \, .$$

\medskip

\sssec{Cospecialization}
\label{Cospecialization}
Let $\pi: S \to \BA^1$ be a one-parameter family over $\BA^1$, and assume that the family $S$ is trivial over $\BA^1 \setminus \{ 0 \}$, i.e., that there exists an isomorphism
$$S|_{\BA^1 \setminus \{ 0 \}} \ \ \ \cong \ \ \ S|_1 \ \times \ (\BA^1 \setminus \{ 0 \}) \, .$$
Then there exists a canonical \textit{cospecialization map} on compactly supported cohomology
$$H^*_c(S|_0) \ \longto \ H^*_c(S|_1) \, .$$
In sheaf-theoretic language, this map is obtained as the canonical map between stalks
$$F|^*_0 \ \longto \ F|^*_1$$
of the constructible complex $F := \pi_! (\Qellbar)_S$ on $\BA^1$, using that the hypothesis on the map $\pi$ implies that $F$ is constant on $\BA^1 \setminus \{ 0 \}$.

\medskip

The same construction applies, and the same notation will be used, if the one-parameter family is not parametrized by the affine line $\BA^1$ but by an arbitrary smooth curve with a fixed $k$-point $x$, playing the role of $0 \in \BA^1$. In the next subsection we will study the cospecialization maps corresponding to the two one-parameter families $d_{comult}$ and $d_{mult}$ defined above.

\medskip

In the proofs of our statements, we will also need cospecialization maps on higher-dimensional varieties. Thus we recall that, given a stratified variety $S$ and a complex $F$ on $S$ which is constant on the strata, there exists a natural cospecialization map
$$F|^*_s \ \longto \ F|^*_t$$
whenever the stratum containing the point $s$ lies in the closure of the stratum containing the point $t$.

\bigskip

\ssec{Construction of the Hopf algebra}

\sssec{The underlying graded vector space}
Given $\check\theta \in \Lambdach_G^{pos}$ we define a vector space
$$A[\check\theta] \ := \ H^{top}_c({}_0Y^{\check\theta, princ}|_{(\check\theta x, \, 1)}) \, .$$
In particular, by Lemma \ref{fiber lemma} and Lemma \ref{Zastava cohomology} we have canonical identifications
$$A[\check\theta] \ = \ H^{top}_c({}_0\BZ^{\check\theta}) \ = \ U(\check{\Fn})[\check\theta] \, .$$
We define $A$ as the $\Lambdach_G^{pos}$-graded vector space
$$A \ := \ \bigoplus_{\check\theta \in \Lambdach_G^{pos}} A[\check\theta] \, .$$
Thus as $\Lambdach_G^{pos}$-graded vector spaces the space $A$ agrees with the universal enveloping algebra $U(\check{\Fn})$.

\medskip

\sssec{The comultiplication map}
Given positive coweights $\check\theta, \check\theta_1, \check\theta_2 \in \Lambdach_G^{pos}$ with $\check\theta_1 + \check\theta_2 = \check\theta$ we define a map of vector spaces
$$\textit{comult}: \ A[\check\theta] \ \longto \ A[\check\theta_1] \otimes A[\check\theta_2]$$
as the cospecialization map corresponding to the one-parameter degeneration $d_{comult}$ obtained from the space ${}_0Y^{\check\theta, princ}$ in Subsection \ref{Beilinson-Drinfeld fusion and Vinberg fusion} above; by Lemma \ref{Zastava cohomology} and Corollary \ref{special versus general} above, this cospecialization map indeed maps the source $A[\check\theta]$ to the target $A[\check\theta_1] \otimes A[\check\theta_2]$.
Summing over all such triples $(\check\theta, \check\theta_1, \check\theta_2)$ we obtain a map
$$\textit{comult}: \ A \ \longto \ A \otimes A \, .$$

\medskip

\sssec{The multiplication map}
We now make the key definition of the present section: We define what will turn out to be the multiplication map of a Hopf algebra structure on $A$. To do so, let $\check\theta \in \Lambdach_G^{pos}$ as before. Then we define the map
$$\textit{mult}: \ \bigoplus_{\check\theta_1 + \check\theta_2 = \check\theta} A[\check\theta_1] \otimes A[\check\theta_2] \ \longto \ A[\check\theta]$$
as the cospecialization map corresponding to the one-parameter degeneration $d_{mult}$ obtained from the space ${}_0Y^{\check\theta, princ}$ in Subsection \ref{Beilinson-Drinfeld fusion and Vinberg fusion} above; again, Lemma \ref{Zastava cohomology} and Corollary \ref{special versus general} show that this cospecialization map indeed maps the source $\bigoplus_{\check\theta_1 + \check\theta_2 = \check\theta} A[\check\theta_1] \otimes A[\check\theta_2]$ to the target $A[\check\theta]$.
Summing over all $\check\theta \in \Lambdach_G^{pos}$ we obtain the desired map
$$\textit{mult}: \ A \otimes A \ \longto \ A \, .$$

\bigskip

\ssec{Associativity of the multiplication via geometry}
\label{Associativity of the multiplication via geometry}

In this subsection we will show:

\medskip

\begin{proposition}
\label{associativity proposition}
The multiplication map
$$mult: \ A \otimes A \ \longto \ A$$
is associative.
\end{proposition}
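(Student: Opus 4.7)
The plan is to realize both associativity bracketings simultaneously as distinct specialization paths in a single \textit{double Vinberg degeneration} $\widetilde d \colon \widetilde Q \to \BA^1 \times \BA^1$, and then invoke the compatibility of cospecialization with composition. Fix three nonzero positive coweights $\check\theta_1, \check\theta_2, \check\theta_3 \in \Lambdach_G^{pos}$ summing to $\check\theta$ and a $k$-point $x$ of $X$; by the $\Lambdach_G^{pos}$-grading it suffices to verify $\textit{mult} \circ (\textit{mult} \otimes \id) = \textit{mult} \circ (\id \otimes \textit{mult})$ on the graded piece $A[\check\theta_1] \otimes A[\check\theta_2] \otimes A[\check\theta_3]$. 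The first and main step is to produce a family $\widetilde d$ with parameters $(s,t)$, trivial over the open stratum $(\BA^1 \setminus \{0\}) \times (\BA^1 \setminus \{0\})$, whose fibers on the level of reduced schemes are canonically identified as
$$\widetilde Q|_{(1,1)} \, = \, {}_0\BZ^{\check\theta}, \quad \widetilde Q|_{(0,1)} \, = \, {}_0\BZ^{\check\theta_1 + \check\theta_2} \times {}_0\BZ^{\check\theta_3}, \quad \widetilde Q|_{(1,0)} \, = \, {}_0\BZ^{\check\theta_1} \times {}_0\BZ^{\check\theta_2 + \check\theta_3},$$
while $\widetilde Q|_{(0,0)}$ contains ${}_0\BZ^{\check\theta_1} \times {}_0\BZ^{\check\theta_2} \times {}_0\BZ^{\check\theta_3}$ as an open and closed component. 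Moreover, the four restricted one-parameter subfamilies $\widetilde Q|_{\BA^1 \times \{1\}}$, $\widetilde Q|_{\{1\} \times \BA^1}$, $\widetilde Q|_{\BA^1 \times \{0\}}$, $\widetilde Q|_{\{0\} \times \BA^1}$ are required to be isomorphic to appropriate pullbacks of the basic Vinberg family $d_{mult}$, so that the cospecialization map induced by $\widetilde Q|_{\BA^1 \times \{1\}}$ recovers the multiplication $\textit{mult}$ on the $(\check\theta_1 + \check\theta_2, \check\theta_3)$-factor, the one induced by $\widetilde Q|_{\{1\} \times \BA^1}$ recovers $\textit{mult}$ on the $(\check\theta_1, \check\theta_2 + \check\theta_3)$-factor, the one induced by $\widetilde Q|_{\{0\} \times \BA^1}$ recovers $\textit{mult} \otimes \id$, and the one induced by $\widetilde Q|_{\BA^1 \times \{0\}}$ recovers $\id \otimes \textit{mult}$, all under the canonical identifications of Lemma \ref{fiber lemma} and Lemma \ref{Zastava cohomology}.

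Granted such a family $\widetilde Q$, associativity follows by a formal cospecialization argument. The constructible complex $F := \widetilde d_! \, \Qellbar_{\widetilde Q}$ on $\BA^1 \times \BA^1$ is locally constant on the open stratum by the triviality assumption on $\widetilde d$, and is in fact constant on each stratum of the natural stratification of $\BA^1 \times \BA^1$ by intersections of the coordinate axes. Applying the higher-dimensional cospecialization construction recalled in Subsection \ref{Cospecialization}, $F$ yields cospecialization maps between its stalks at any two points comparable in the closure order of this stratification, and these maps compose functorially. Consequently the cospecialization $F|^*_{(0,0)} \to F|^*_{(1,1)}$ factors through $F|^*_{(0,1)}$, where the composite computes $\textit{mult} \circ (\textit{mult} \otimes \id)$, and also through $F|^*_{(1,0)}$, where the composite computes $\textit{mult} \circ (\id \otimes \textit{mult})$. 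Both compositions agree with the direct cospecialization $F|^*_{(0,0)} \to F|^*_{(1,1)}$ and hence with one another, proving the associativity identity.

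The hardest part is the construction of $\widetilde Q$ itself. My approach is to iterate Vinberg fusion by exhibiting two independent Vinberg-type parameters acting on disjoint sub-factors of the Zastava space, using both the multi-parameter $T_{adj}^+ = \BA^r$ structure of the local model ${}_0 Y^{\check\theta}$ and the factorization in families of Lemma \ref{factorization in families}. Concretely, I would realize $\widetilde Q$ as a subfamily of ${}_0 Y^{\check\theta}$ cut out by a two-parameter family of points in $X^{\check\theta} \times T_{adj}^+$: varying $s$ combines a divisor-side collision bringing two of three factorization points together with a matching Vinberg-side coordinate moving to zero, in such a way that the induced degeneration of Zastava factors is the one multiplying $\check\theta_1$ and $\check\theta_2$, and varying $t$ does the analogous thing for $\check\theta_2$ and $\check\theta_3$. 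On the open locus in $X^{\check\theta}$ where the factorization points remain pairwise distinct, Lemma \ref{factorization in families} guarantees that the two collisions affect disjoint Zastava sub-factors and hence act genuinely independently, giving the desired $\BA^1 \times \BA^1$ family. The verification that the four corner fibers take the claimed form then reduces, via Corollary \ref{special versus general} applied in each parameter separately and Corollary \ref{stratification of local models} at the origin, to a direct stratum computation; the most delicate point is to arrange the construction so that in the limit $(s,t) \to (0,0)$ precisely the triple Zastava product appears as an open-and-closed component, with no spurious extra components interfering with the cospecialization calculation above.
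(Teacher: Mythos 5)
Your formal skeleton is exactly the paper's: build a two-parameter family over $\BA^1 \times \BA^1$, trivial off the coordinate axes, with a (union of) triple Zastava product(s) at the origin, double products along the axes, and a single Zastava space generically; then the two two-step cospecializations from $(0,0)$ to $(1,1)$ agree because both agree with the direct one. The gap is in the construction of the family itself, which is the actual content of the proposition, and your proposed construction would fail. A single local model carries only \emph{one} Vinberg multiplication direction: after restricting to the principal line $L_B$, the family ${}_0Y^{\check\theta, princ} \to X^{\check\theta} \times \BA^1$ has one global Vinberg parameter, and the remaining coordinates of $T_{adj}^+ = \BA^r$ parametrize degenerations toward the \emph{parabolic} loci $\VinBun_{G,P}$ (whose fibers are described via Levi data), not a second iterated multiplication. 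Moreover, the Vinberg parameter is not localized at a point of the curve, so sending it to zero degenerates \emph{all} factorization factors simultaneously; you cannot make it "multiply $\check\theta_1$ and $\check\theta_2$ only" by coupling it to a collision of two of the three divisor points. Finally, coupling a divisor collision to the Vinberg degeneration conflates $\textit{mult}$ with $\textit{comult}$: the two-parameter family $d\colon Q \to X \times \BA^1$ that combines these is precisely what the paper uses to prove the \emph{Hopf compatibility}, and its origin fiber consists of quadruple products $A^{\otimes 4}$, not the triple products needed for associativity.

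The paper's construction is genuinely different and is the new idea you are missing: it takes \emph{two} copies of the Vinberg semigroup and forms $D = (\Vin_G \times \Vin_G)/G$, the quotient by the anti-diagonal action gluing the second $G$-slot of the first copy to the first $G$-slot of the second; the semigroup multiplication descends to $D \to \Vin_G$, and the product map gives $D \to T_{adj}^+ \times T_{adj}^+$, hence a mapping space $R \to \BA^1 \times \BA^1$ after restricting to $L_B \times L_B$ (all at the fixed divisor $\check\theta x$, with no Beilinson--Drinfeld fusion involved). The two coordinates are then two honestly independent Vinberg parameters, and the three $G$-slots of $D$ produce, over the origin, the disjoint union of triple products $\bigcup {}_0\BZ^{\check\theta_1} \times {}_0\BZ^{\check\theta_2} \times {}_0\BZ^{\check\theta_3}$ via the stratification of the $B \times B$-locus. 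Also note a smaller discrepancy: the axis fibers are not single products ${}_0\BZ^{\check\theta_1+\check\theta_2} \times {}_0\BZ^{\check\theta_3}$ for fixed $\check\theta_i$ but disjoint unions over all two-term decompositions of $\check\theta$, which is what makes the axis-to-interior cospecialization literally the map $\textit{mult}$ on $(A \otimes A)[\check\theta]$.
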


\medskip

\sssec{The geometric idea for the proof}

Before proceeding to the actual proof, we describe here its rather basic idea, which is entirely geometric in nature: Recall that the map $mult$ is obtained as the cospecialization map of the family $d_{mult}$ over $\BA^1$. In the proof we will construct a family over the ``square'' $\BA^1 \times \BA^1$. The top compactly supported cohomology of the fibers of this family takes the following shape: Over the point $(1,1)$, or in fact over any point away from the coordinate axes, it is equal to (an appropriate summand of) $A$. Over the coordinate axes, yet away from the origin $(0,0)$, it is equal to (an appropriate summand of) $A \otimes A$. Finally, over the origin $(0,0)$ it is equal to (an appropriate summand of) $A \otimes A \otimes A$.

\medskip

In this geometric setup, one can cospecialize from the point $(0,0)$ to a general point by composing two cospecialization steps: The first step consists of cospecializing from the most special point $(0,0)$ to one of the two coordinate axes, and the second step consists of cospecializing from the chosen coordinate axis to a general point in the interior of the ``square'' $\BA^1 \times \BA^1$. Depending on the choice of axis one obtains two a priori different two-step cospecialization procedures, but their composites agree as both must agree with the direct diagonal cospecialization. We will then realize each of the two two-step cospecialization procedures as a way to multiply three elements in $A$ via the map $mult$, and the agreement between the composites will boil down precisely to the associativity axiom for $mult$.

\medskip

We now carry this idea out:

\medskip

\sssec{Proof of associativity}

\begin{proof}
We construct the desired two-parameter family
$$R \ \longto \ \BA^1 \times \BA^1$$
analogously to the construction in Subsection \ref{Construction of local models} above.
Let $G$ act on the product $\Vin_G \times \Vin_G$ via the anti-diagonal action obtained from the action of $G = \{1\} \times G \into G \times G$ on the first factor and the action of $G = G \times \{1\} \into G \times G$ on the second factor. Denote by $D$ the quotient of $\Vin_G \times \Vin_G$ by this $G$-action; thus $D$ still carries a $G \times G$-action, corresponding to the action of $G \times \{1\}$ on the first copy of $\Vin_G$ and the action of $\{1\} \times G$ on the second copy of $\Vin_G$. The multiplication map $\Vin_G \times \Vin_G \to \Vin_G$ of the Vinberg semigroup descends to a map $D \to \Vin_G$, and we denote by $D^{Bruhat}$ the inverse image of $\Vin_G^{Bruhat}$ under this map. Let $\tilde R$ denote the open substack of the mapping stack
$$\Maps \bigl(X, \, D / B \times N^- \bigr)$$
consisting of those maps $X \to D / B \times N^-$ which send the complement $X \setminus \{x\}$ of the fixed $k$-point $x$ of $X$ to the open substack
$$D^{Bruhat} / B \times N^- \ \longinto \ D / B \times N^- \, .$$
Next, the product map $\Vin_G \times \Vin_G \to T_{adj}^+ \times T_{adj}^+$ descends to a map $D \to T_{adj}^+ \times T_{adj}^+$, which in turn induces a map
$$\tilde R \ \longto \ T_{adj}^+ \times T_{adj}^+ \, .$$
Finally, we obtain the desired two-parameter family
$$R \ \longto \ \BA^1 \times \BA^1$$
by restricting the family $\tilde R$ to the product of the principal directions
$$L_B \times L_B \ = \ \BA^1 \times \BA^1 \ \ \longinto \ \ T_{adj}^+ \times T_{adj}^+ \, .$$

\medskip

As in Subsection \ref{Construction of local models} above the total space $R$ of this family decomposes into a disjoint union of open and closed components
$$R \ = \ \bigcup_{\check\theta \in \Lambdach_G^{pos}} R^{\check\theta} \, .$$
By construction, each two-parameter family $R^{\check\theta} \to \BA^1 \times \BA^1$ is trivial over $\BA^1 \setminus \{0\} \times \BA^1 \setminus \{0\}$; as in Lemma \ref{fiber lemma} (a) above one sees that the fiber over this locus is equal to ${}_0\BZ^{\check\theta}$; hence the top compactly supported cohomology of this fiber is equal to $A[\check\theta]$.

\medskip

Similarly, the restriction of this family to $(\BA^1 \setminus \{0\}) \times \{0\}$ and to $\{0\} \times (\BA^1 \setminus \{0\})$ is trivial; it follows as in Lemma \ref{fiber lemma} (b) above that on the level of reduced schemes, the fiber of either of these two restrictions decomposes into a disjoint union of open and closed components
$$\bigcup_{\check\theta_1 + \check\theta_2 \, = \, \check\theta} \ {}_0\BZ^{\check\theta_1} \times {}_0\BZ^{\check\theta_2}$$
where the union runs over all positive coweights $\check\theta_1, \check\theta_2 \in \Lambdach_G^{pos}$ such that $\check\theta_1 + \check\theta_2 = \check\theta$; hence the top compactly supported cohomology of the fiber of either of these two restrictions is equal to $\bigoplus_{\check\theta_1 + \check\theta_2 = \check\theta} A[\check\theta_1] \otimes A[\check\theta_2]$.

\medskip

Finally, as in Lemma \ref{fiber lemma} (b) above one sees that the fiber of the family $R$ over the origin $(0,0)$ decomposes, again on the level of reduced schemes, into a disjoint union of open and closed components
$$\bigcup_{\check\theta_1 + \check\theta_2 + \check\theta_3 \, = \, \check\theta} \ {}_0\BZ^{\check\theta_1} \times {}_0\BZ^{\check\theta_2} \times {}_0\BZ^{\check\theta_3}$$
where the union runs over all positive coweights $\check\theta_1, \check\theta_2, \check\theta_3 \in \Lambdach_G^{pos}$ such that $\check\theta_1 + \check\theta_2 + \check\theta_3 = \check\theta$.
Here, recalling the description of $\Vin_{G,B}$ from Lemma \ref{Vinberg lemma} above, the ``middle'' Zastava space arises from the fact that the \textit{second} copy of $G$ in the \textit{first} copy of $\Vin_{G,B}$ is identified with the \textit{first} copy of $G$ in the \textit{second} copy of $\Vin_{G,B}$ in the definition of $D$ above.
In particular, the top compactly supported cohomology of this fiber is equal to
$$\bigoplus_{\check\theta_1 + \check\theta_2 + \check\theta_3 = \check\theta} A[\check\theta_1] \otimes A[\check\theta_2] \otimes A[\check\theta_3] \, .$$

\medskip

The origin $(0,0)$ lies in the closures of the loci $(\BA^1 \setminus \{0\}) \times \{0\}$ and $\{0\} \times (\BA^1 \setminus \{0\})$, and those loci in turn lie in the closure of $(\BA^1 \setminus \{0\}) \times (\BA^1 \setminus \{0\})$. Thus by Subsection \ref{Cospecialization} above, we obtain cospecialization maps
$$H^{top}_c(R|_{(0,0)}) \ \longto \ H^{top}_c(R|_{(0,1)}) \ \longto \ H^{top}_c(R|_{(1,1)})$$
and
$$H^{top}_c(R|_{(0,0)}) \ \longto \ H^{top}_c(R|_{(1,0)}) \ \longto \ H^{top}_c(R|_{(1,1)}) \, ,$$
and the two composite maps agree since both agree with the direct cospecialization map $H^{top}_c(R|_{(0,0)}) \to H^{top}_c(R|_{(1,1)})$ resulting from the fact that the origin $(0,0)$ of course also lies in the closure of $(\BA^1 \setminus \{0\}) \times (\BA^1 \setminus \{0\})$. But by construction the first composite map takes the form
$$\xymatrix@+10pt{
\bigoplus_{\check\theta_1 + \check\theta_2 + \check\theta_3 = \check\theta} A[\check\theta_1] \otimes A[\check\theta_2] \otimes A[\check\theta_3] \ar[rr]^{ \ \ \ \ \ \ \ \ \ \ \ \ \ \ \ \ \ \ mult(id \otimes mult)} & & A[\check\theta] \, , \\
}$$
while the second composite map takes the form
$$\xymatrix@+10pt{
\bigoplus_{\check\theta_1 + \check\theta_2 + \check\theta_3 = \check\theta} A[\check\theta_1] \otimes A[\check\theta_2] \otimes A[\check\theta_3] \ar[rr]^{ \ \ \ \ \ \ \ \ \ \ \ \ \ \ \ \ \ \ mult(mult \otimes id)} & & A[\check\theta] \, , \\
}$$
proving the associativity of the multiplication map.
\end{proof}

\bigskip

\ssec{The Hopf algebra axiom via geometry}
In this Subsection we show that the triple $(A, mult, comult)$ indeed forms a Hopf algebra. Since $A$ is a graded algebra and coalgebra over the field $k$ and since $A_0 = k$, it suffices to show that the multiplication and comultiplication structures are compatible; the existence of the antipode is automatic in this setting.
We note that the possibility of relating the maps $mult$ and $comult$ is the key feature of the local model studied in the present article; this observation was in fact the starting point for the study of the Vinberg fusion. More precisely, the two degenerations $d_{mult}$ and $d_{comult}$ giving rise to the maps $mult$ and $comult$ could have been constructed independently of each other, and indeed the degeneration $d_{comult}$ is well-known from \cite{BFGM} --- unlike the degeneration $d_{mult}$, it is not at all related to the Vinberg semigroup. The degeneration $d_{mult}$ is however new, and we will now exploit that both degenerations naturally appear together in the principal direction of the defect-free local model
$${}_0Y^{\check\theta, princ} \ \longto \ X^{\check\theta} \times \BA^1$$
as the two-parameter sub-family
$$d: \ Q \ \longto X \times \BA^1 \, .$$
Concretely, we will use the two-parameter family $d$ to show:

\medskip

\begin{proposition}
The map
$$comult: \ A \ \longto \ A \otimes A$$
is an algebra homomorphism for the algebra structures on $A$ and $A \otimes A$ defined by the map $mult$. Equivalently, the map
$$mult: \ A \otimes A \ \longto \ A$$
is a coalgebra homorphism for the coalgebra structures on $A \otimes A$ and $A$ defined by the map $comult$. Thus the triple $(A, mult, comult)$ indeed forms a Hopf algebra.
\end{proposition}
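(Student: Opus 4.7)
The proof follows the same pattern as that of Proposition \ref{associativity proposition}, now using the two-parameter family
$$d: \ Q \ \longto \ X \times \BA^1$$
introduced in Subsection \ref{Beilinson-Drinfeld fusion and Vinberg fusion}, which by construction contains both $d_{mult}$ and $d_{comult}$ as one-parameter sub-families. Fix $\check\theta, \check\theta_1, \check\theta_2 \in \Lambdach_G^{pos}$ with $\check\theta_1 + \check\theta_2 = \check\theta$, and stratify $X \times \BA^1$ by the four locally closed pieces $A = \{(x, 0)\}$, $B = \{x\} \times (\BA^1 \setminus \{0\})$, $C = (X \setminus \{x\}) \times \{0\}$ and $D = (X \setminus \{x\}) \times (\BA^1 \setminus \{0\})$; we have $A \subset \bar{B} \cap \bar{C}$ and $B \cup C \subset \bar D$. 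Combining Lemma \ref{fiber lemma} with Lemma \ref{factorization in families} applied to the two disjoint points $y \neq x$, one verifies that $d$ is locally trivial on each stratum and computes the top compactly supported cohomology of the four fibers as $\bigoplus_{\check\mu_1 + \check\mu_2 = \check\theta} A[\check\mu_1] \otimes A[\check\mu_2]$ over $A$, as $A[\check\theta]$ over $B$, as $\bigoplus_{\check\nu_1 + \check\nu_2 = \check\theta_1, \, \check\nu_3 + \check\nu_4 = \check\theta_2} A[\check\nu_1] \otimes A[\check\nu_2] \otimes A[\check\nu_3] \otimes A[\check\nu_4]$ over $C$, and as $A[\check\theta_1] \otimes A[\check\theta_2]$ over $D$.

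I would then identify each of the cospecialization maps between adjacent strata with an algebraic operation on $A$. By construction, the cospecializations $A \to B$ and $B \to D$ are precisely the graded components of $mult$ and $comult$. The cospecialization $C \to D$ deforms $c$ from $0$ to a nonzero value independently at each of the two disjoint points $y$ and $x$, and the factorization isomorphism of Lemma \ref{factorization in families} identifies it with the tensor product $mult \otimes mult$, where the first $mult$ acts on the Vinberg piece $A[\check\nu_1] \otimes A[\check\nu_2]$ above $y$ and the second on $A[\check\nu_3] \otimes A[\check\nu_4]$ above $x$. Dually, the cospecialization $A \to C$ splits the divisor $\check\theta x$ into the disjoint pair $\check\theta_1 y + \check\theta_2 x$ while keeping $c = 0$; the same factorization identifies it with $comult \otimes comult$ applied to the two Vinberg pieces of the $A$-fiber, followed by the swap $\tau_{23}$ of the middle two tensor factors needed to rearrange the result into the order used for the $C$-fiber.

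By the functoriality of cospecialization under composition of stratum inclusions, the two composites
$$A \, \longto \, B \, \longto \, D \qquad \text{and} \qquad A \, \longto \, C \, \longto \, D$$
both agree with the direct cospecialization $A \to D$. Under the identifications above this yields the identity
$$comult \, \circ \, mult \ \ = \ \ (mult \otimes mult) \, \circ \, \tau_{23} \, \circ \, (comult \otimes comult)$$
of maps $A \otimes A \longto A \otimes A$, which is precisely the bialgebra compatibility axiom. Summing over all admissible triples $(\check\theta, \check\theta_1, \check\theta_2)$ shows that $comult$ is an algebra homomorphism, or equivalently that $mult$ is a coalgebra homomorphism. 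Since $A$ is $\Lambdach_G^{pos}$-graded with one-dimensional piece $A[0] = k$ in degree zero, the resulting graded connected bialgebra automatically admits an antipode and is therefore a Hopf algebra.

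The substantive technical point lies in the identification in the second paragraph of the ``mixed'' cospecializations $A \to C$ and $C \to D$ with the appropriate tensor products of the basic $mult$ and $comult$ maps. This requires unwinding the factorization isomorphism of Lemma \ref{factorization in families} to check that the cospecialization map attached to the two-parameter family $Q$ decomposes as a tensor product of cospecialization maps attached to the one-parameter families at the disjoint points $y$ and $x$, together with a careful bookkeeping of the Vinberg splittings needed to produce the swap $\tau_{23}$; once this compatibility is in place, the remainder of the argument is purely formal.
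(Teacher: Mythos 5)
Your proposal is correct and follows essentially the same route as the paper: both use the two-parameter family $d: Q \to X \times \BA^1$, compute the fibers (and their top compactly supported cohomology) over the four strata of $X \times \BA^1$, identify the cospecialization maps with $mult$, $comult$, $mult \otimes mult$, and $comult \otimes comult$ (up to the middle swap, which you attach to a different arrow than the paper does --- an immaterial bookkeeping choice), and conclude by the agreement of the two composite cospecializations with the direct one. The technical point you flag at the end --- unwinding the factorization isomorphism to see that the fiber over $(X \setminus \{x\}) \times \{0\}$ splits as a fourfold product indexed by quadruples with the stated constraints --- is exactly what the paper carries out via its subscript bookkeeping $(1,x), (1,y), (2,x), (2,y)$.
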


\medskip

\sssec{The geometric idea for the proof}
As in Subsection \ref{Associativity of the multiplication via geometry} above, we will give a geometric proof; we now explain its idea and only then proceed to the actual proof.

\medskip

The idea of verifying the Hopf algebra axiom is the following: Recall that we have fixed a $k$-point $x$ on the curve $X$. In the geometric setup of the two-parameter degeneration $d$ over the ``square'' $X \times \BA^1$, one can cospecialize from the most special point $(x,0)$ to a general point in the interior of the ``square'' $X \times \BA^1$ by choosing one of two two-step procedures:
The first procedure first cospecializes from $(x,0)$ to the ``axis'' $X \times \{ 0 \}$, and then cospecializes from the ``axis'' $X \times \{ 0 \}$ to the interior of the ``square'' $X \times \BA^1$. The second procedure is analogous, using the other ``axis'' $\{ x \} \times \BA^1$ instead.

\medskip

Depending on the choice of ``axis'' one again obtains two composite cospecialization procedures, and again they must agree since both agree with the direct ``diagonal'' cospecialization. On the level of the top compactly supported cohomology of the fibers, this compatibility yields precisely the commutativity of the relevant diagram. We now carry this idea out:

\medskip

\sssec{Proof of the Hopf algebra axiom}

\begin{proof}
We need to show that the diagram

$$\xymatrix@+10pt{
A \otimes A \otimes A \otimes A \ar[rrr]^{ \ \ \ \ \ (mult \, \otimes \, mult) \, \circ \, (id \, \otimes \, \tau \, \otimes id)} & & & A \otimes A \\
A \otimes A \ar[rrr]^{mult} \ar[u]^{comult \, \otimes \, comult} & & & A \ar[u]^{comult} \\
}$$

\medskip

\noindent commutes, where the map $\tau: A \otimes A \to A \otimes A$ sends $a \otimes b$ to $b \otimes a$.
Following the above outline, we will obtain this diagram via cospecialization in the two-parameter family $d$; as in the proof of Proposition \ref{associativity proposition} above, this immediately implies the required commutativity.

\medskip

The fibers of the family $d: Q \to X \times \BA^1$ were already described in Lemma \ref{fiber lemma}, except for the fiber over the locus $(X \setminus \{ x \}) \times \{ 0 \}$. Before describing the latter, we introduce the following additional subscripts with the goal of aligning the notation with the proof strategy. In the setting of Corollary \ref{special versus general}, we introduce the subscripts ``$(x)$'' and ``$(y)$'' on the right hand side of the identification
$$Q|_{(\check\theta_1 x + \check\theta_2 y, \, 1)} \ = \ {}_0\BZ^{\check\theta_1}_{(x)} \times {}_0\BZ^{\check\theta_2}_{(y)}$$
to indicate that this product decomposition into smaller Zastava spaces is due to the Beilinson-Drinfeld fusion of the points $x$ and $y$ on the curve $X$. Similarly, we introduce the subscripts $(1)$ and $(2)$ in the description, on the level of reduced schemes, of the special fiber $Q|_{(\check\theta x, \, 0)}$ as the disjoint union
$$\bigcup_{\check\mu_1 + \check\mu_2 \, = \, \check\theta} \ {}_0\BZ^{\check\mu_1}_{(1)} \times {}_0\BZ^{\check\mu_2}_{(2)}$$
to indicate that this product decomposition into smaller Zastava spaces is due to the Vinberg fusion.

\medskip

With this notation, the fiber $Q|_{(\check\theta_1 x + \check\theta_2 y, 0)}$ of the family $d$ over a point lying in the locus $(X \setminus \{ x \}) \times \{ 0 \}$ decomposes, on the level of reduced schemes, into a disjoint union
$$\bigcup_{(\check\gamma_1, \check\gamma_2, \check\delta_1, \check\delta_2)} \ {}_0\BZ^{\check\gamma_1}_{(1,x)} \times {}_0\BZ^{\check\gamma_2}_{(1,y)} \times {}_0\BZ^{\check\delta_1}_{(2,x)} \times {}_0\BZ^{\check\delta_2}_{(2,y)}$$
where the union runs over all $\check\gamma_1, \check\gamma_2, \check\delta_1, \check\delta_2 \in \Lambdach_G^{pos}$ such that $\check\gamma_1 + \check\delta_1 = \check\theta_1$ and $\check\gamma_2 + \check\delta_2 = \check\theta_2$. Here the subscripts indicate how the corresponding copies of Zastava space arise in the family $d$ via Beilinson-Drinfeld fusion and via Vinberg fusion from the Zastava spaces occurring in the descriptions of the fibers $Q|_{(\check\theta x, 0)}$ and $Q|_{(\check\theta_1 x + \check\theta_2 y, 1)}$.

\medskip

From this description we see that the cospecialization map from the top compactly supported cohomology of the fiber $Q|_{(\check\theta x, 0)}$ to the top compactly supported cohomology of the fiber $Q|_{(\check\theta_1 x + \check\theta_2 y, 0)}$ realizes the left vertical arrow
$$\xymatrix@+10pt{
\bigoplus_{\check\mu_1 + \check\mu_2 = \check\theta} A[\check\mu_1] \otimes A[\check\mu_2] \ar[rr]^{comult \, \otimes \, comult \ \ \ \ \ \ \ \ \ \ \ \ } & & \bigoplus_{(\check\gamma_1, \check\gamma_2, \check\delta_1, \check\delta_2)} A[\check\gamma_1] \otimes A[\check\gamma_2] \otimes A[\check\delta_1] \otimes A[\check\delta_2]
}$$
of the desired diagram; here the summand on the left hand side corresponding to the pair $(\check\mu_1, \check\mu_2)$ maps to the summands on the right hand side corresponding to those quadruples $(\check\gamma_1, \check\gamma_2, \check\delta_1, \check\delta_2)$ satisfying $\check\gamma_1 + \check\gamma_2 = \check\mu_1$ and $\check\delta_1 + \check\delta_2 = \check\mu_2$.

\medskip

Next note that, as indicated by the subscripts in the product decomposition
$${}_0\BZ^{\check\gamma_1}_{(1,x)} \times {}_0\BZ^{\check\gamma_2}_{(1,y)} \times {}_0\BZ^{\check\delta_1}_{(2,x)} \times {}_0\BZ^{\check\delta_2}_{(2,y)}$$
appearing in the description of the fiber $Q|_{(\check\theta_1 x + \check\theta_2 y,0)}$ above, the first factor of the product $Q|_{(\check\theta_1 x + \check\theta_2 y,1)} = {}_0\BZ^{\check\theta_1}_{(x)} \times {}_0\BZ^{\check\theta_2}_{(y)}$ degenerates to the product of the first and third Zastava factors, while the second factor degenerates to the second and third Zastava factors, under the Vinberg degeneration of the fiber $Q|_{(\check\theta_1 x + \check\theta_2 y, 1)}$ to the fiber $Q|_{(\check\theta_1 x + \check\theta_2 y,0)}$. Hence the cospecialization map from the top compactly supported cohomology of the fiber $Q|_{(\check\theta_1 x + \check\theta_2 y,0)}$ to the top compactly supported cohomology of the fiber $Q|_{(\check\theta_1 x + \check\theta_2 y,1)}$ realizes the top horizontal arrow
$$\xymatrix@+10pt{
\bigoplus_{(\check\gamma_1, \check\gamma_2, \check\delta_1, \check\delta_2)} A[\check\gamma_1] \otimes A[\check\gamma_2] \otimes A[\check\delta_1] \otimes A[\check\delta_2] \ar[rrr]^{ \ \ \ \ \ \ \ \ \ \ \ \ \ \ \ \ \ \ \ (mult \, \otimes \, mult) \, \circ \, (id \, \otimes \, \tau \, \otimes id)} & & & A[\check\theta_1] \otimes A[\check\theta_2] \\
}$$

\medskip

Finally, by definition of the map $mult$ the degeneration of the fiber $Q|_{(\check\theta x, 1)}$ to the fiber $Q|_{(\check\theta x, 0)}$ induces, in the same fashion, the bottom horizontal arrow
$$\xymatrix@+10pt{
\bigoplus_{\check\mu_1 + \check\mu_2 = \check\theta} A[\check\mu_1] \otimes A[\check\mu_2] \ar[r]^{ \ \ \ \ \ \ \ \ \ \ \ \ \ mult} & A[\check\theta] \\
}$$
of the desired diagram. Similarly, by definition of the map $comult$ the degeneration of the fiber $Q|_{(\check\theta_1 x + \check\theta_2 y,1)}$ to the fiber $Q|_{(\check\theta x, 1)}$ induces the right vertical arrow
$$\xymatrix@+10pt{
A[\check\theta] \ar[r]^{comult \ \ \ \ \ \ } & A[\check\theta_1] \otimes A[\check\theta_2] \, .
}$$

\end{proof}

\medskip

\ssec{A question regarding the Hopf algebra structure}
\label{A question regarding the Hopf algebra structure}

We have constructed the comultiplication of our Hopf algebra $A$ via the usual Beilinson-Drinfeld fusion for Zastava spaces, i.e., via the cospecialization map of the one-parameter degeneration $d_{comult}$ defined above. As has already been discussed, this construction is well-known and is unrelated to the Vinberg semigroup. In fact, it is already known from the works \cite{BG2}, \cite{FFKM} that this comultiplication on the vector space $A = U(\check{\Fn})$ agrees with the usual comultiplication on $U(\check{\Fn})$. The datum of the multiplication on $U(\check{\Fn})$ is equivalent to the datum of the Langlands dual Lie bracket and thus arguably much more interesting. One may ask:

\medskip

\begin{question}
Does the multiplication map $\textit{mult}$ of the Hopf algebra structure constructed above agree with the multiplication map of $U(\check{\Fn})$? In other words, is the Hopf algebra $A$ constructed geometrically via the two-parameter degeneration $d$ equal to the universal enveloping algebra $U(\check{\Fn})$? Put informally, can the Langlands dual Lie bracket between two elements of $\check{\Fn}$ be obtained geometrically by ``Vinberg deforming'' the corresponding cohomology classes?
\end{question}

\newpage

\end{document}